\def\ssign{\textsection\nobreak\hspace{1pt plus 0.3pt}}
\let\origsection=\section 
\def\mysection{\@mystartsection{section}{1}\z@{.7\linespacing\@plus\linespacing}{.5\linespacing}{\normalfont\scshape\centering\ssign}}
\def\section{\@ifstar{\origsection*}{\mysection}}
\def\appendix{\par\c@section\z@ \c@subsection\z@
	\let\sectionname\appendixname
	\let\section=\origsection
	\def\thesection{\@Alph\c@section}} 
\def\@mystartsection#1#2#3#4#5#6{\if@noskipsec \leavevmode \fi
	\par \@tempskipa #4\relax
	\@afterindenttrue
	\ifdim \@tempskipa <\z@ \@tempskipa -\@tempskipa \@afterindentfalse\fi
	\if@nobreak \everypar{}\else
	\addpenalty\@secpenalty\addvspace\@tempskipa\fi
	\@dblarg{\@mysect{#1}{#2}{#3}{#4}{#5}{#6}}}
\def\@mysect#1#2#3#4#5#6[#7]#8{\edef\@toclevel{\ifnum#2=\@m 0\else\number#2\fi}\ifnum #2>\c@secnumdepth \let\@secnumber\@empty
	\else \@xp\let\@xp\@secnumber\csname the#1\endcsname\fi
	\@tempskipa #5\relax
	\ifnum #2>\c@secnumdepth
	\let\@svsec\@empty
	\else
	\refstepcounter{#1}\edef\@secnumpunct{\ifdim\@tempskipa>\z@ \@ifnotempty{#8}{\@nx\enspace}\else
		\@ifempty{#8}{.}{\@nx\enspace}\fi
	}\@ifempty{#8}{\ifnum #2=\tw@ \def\@secnumfont{\bfseries}\fi}{}\protected@edef\@svsec{\ifnum#2<\@m
		\@ifundefined{#1name}{}{\ignorespaces\csname #1name\endcsname\space
		}\fi
		\@seccntformat{#1}}\fi
	\ifdim \@tempskipa>\z@ \begingroup #6\relax
	\@hangfrom{\hskip #3\relax\@svsec}{\interlinepenalty\@M #8\par}\endgroup
	\ifnum#2>\@m \else \@tocwrite{#1}{#8}\fi
	\else
	\def\@svsechd{#6\hskip #3\@svsec
		\@ifnotempty{#8}{\ignorespaces#8\unskip
			\@addpunct.}\ifnum#2>\@m \else \@tocwrite{#1}{#8}\fi
	}\fi
	\global\@nobreaktrue
	\@xsect{#5}}
\renewcommand{\PrintDOI}[1]{\doi{#1}}
\numberwithin{equation}{section}
\numberwithin{figure}{section}
\def\alabel{\upshape({\itshape \alph*\,})}
\let\polishlcross=\l
\def\l{\ifmmode\ell\else\polishlcross\fi}
\let\emptyset=\varnothing
\let\setminus=\smallsetminus
\def\moverlay{\mathpalette\mov@rlay}
\def\mov@rlay#1#2{\leavevmode\vtop{   \baselineskip\z@skip \lineskiplimit-\maxdimen
		\ialign{\hfil$\m@th#1##$\hfil\cr#2\crcr}}}
\newcommand{\charfusion}[3][\mathord]{
	#1{\ifx#1\mathop\vphantom{#2}\fi
		\mathpalette\mov@rlay{#2\cr#3}
	}
	\ifx#1\mathop\expandafter\displaylimits\fi}
\newcommand{\dcup}{\charfusion[\mathbin]{\cup}{\cdot}}
\DeclareFontFamily{U}  {MnSymbolC}{}
\DeclareSymbolFont{MnSyC}         {U}  {MnSymbolC}{m}{n}
\DeclareFontShape{U}{MnSymbolC}{m}{n}{
	<-6>  MnSymbolC5
	<6-7>  MnSymbolC6
	<7-8>  MnSymbolC7
	<8-9>  MnSymbolC8
	<9-10> MnSymbolC9
	<10-12> MnSymbolC10
	<12->   MnSymbolC12}{}
\DeclareMathSymbol{\powerset}{\mathord}{MnSyC}{180}
\newcommand{\pedge}[9]{
	
	\ifx\relax#6\relax
	\def\qoffs{0pt}
	\else
	\def\qoffs{#6}
	\fi
	
	\def\phedge{
		($#1+#5!\qoffs!-90:#2-#5$) -- 
		($#2+#1!\qoffs!-90:#3-#1$) -- 
		($#3+#2!\qoffs!-90:#4-#2$) -- 
		($#4+#3!\qoffs!-90:#5-#3$) -- 
		($#5+#4!\qoffs!-90:#1-#4$) -- cycle}

	\coordinate (12) at ($#1!\qoffs!90:#2$);
	\coordinate (15) at ($#1!\qoffs!-90:#5$);
	\coordinate (23) at ($#2!\qoffs!90:#3$);
	\coordinate (21) at ($#2!\qoffs!-90:#1$);
	\coordinate (34) at ($#3!\qoffs!90:#4$);
	\coordinate (32) at ($#3!\qoffs!-90:#2$);
	\coordinate (45) at ($#4!\qoffs!90:#5$);
	\coordinate (43) at ($#4!\qoffs!-90:#3$);
	\coordinate (51) at ($#5!\qoffs!90:#1$);
	\coordinate (54) at ($#5!\qoffs!-90:#4$);

	\def\nphedge{
		(15) let \p1=($(15)-#1$), \p2=($(12)-#1$) in 
		arc[start angle={atan2(\y1,\x1)}, delta angle={atan2(\y2,\x2)-atan2(\y1,\x1)-360*(atan2(\y2,\x2)-atan2(\y1,\x1)>0)}, x radius=\qoffs, y radius=\qoffs] --
		(21) let \p1=($(21)-#2$), \p2=($(23)-#2$) in 
		arc[start angle={atan2(\y1,\x1)}, delta angle={atan2(\y2,\x2)-atan2(\y1,\x1)-360*(atan2(\y2,\x2)-atan2(\y1,\x1)>0)}, x radius=\qoffs, y radius=\qoffs] --
		(32) let \p1=($(32)-#3$), \p2=($(34)-#3$) in 
		arc[start angle={atan2(\y1,\x1)}, delta angle={atan2(\y2,\x2)-atan2(\y1,\x1)-360*(atan2(\y2,\x2)-atan2(\y1,\x1)>0)}, x radius=\qoffs, y radius=\qoffs] --
		(43) let \p1=($(43)-#4$), \p2=($(45)-#4$) in 
		arc[start angle={atan2(\y1,\x1)}, delta angle={atan2(\y2,\x2)-atan2(\y1,\x1)-360*(atan2(\y2,\x2)-atan2(\y1,\x1)>0)}, x radius=\qoffs, y radius=\qoffs] --
		(54) let \p1=($(54)-#5$), \p2=($(51)-#5$) in 
		arc[start angle={atan2(\y1,\x1)}, delta angle={atan2(\y2,\x2)-atan2(\y1,\x1)-360*(atan2(\y2,\x2)-atan2(\y1,\x1)>0)}, x radius=\qoffs, y radius=\qoffs] --
		cycle}
	
	\ifx\relax#7\relax
	\def\plwidth{1pt}
	\else
	\def\plwidth{#7}
	\fi
	
	\ifx\relax#9\relax
	\fill \nphedge;
	\else
	\fill[#9]\nphedge;
	\fi
	
	\ifx\relax#8\relax
	\draw[line width=\plwidth,rounded corners=\qoffs]\nphedge;
	\else
	\draw[line width=\plwidth,#8]\nphedge;
	\fi
}
\newcommand{\qedge}[7]{
	
	\ifx\relax#4\relax
	\def\qoffs{0pt}
	\else
	\def\qoffs{#4}
	\fi
	
	\def\qhedge{
		($#1+#3!\qoffs!-90:#2-#3$) --
		($#2+#1!\qoffs!-90:#3-#1$) --
		($#3+#2!\qoffs!-90:#1-#2$) -- cycle}

	\coordinate (12) at ($#1!\qoffs!90:#2$);
	\coordinate (13) at ($#1!\qoffs!-90:#3$);
	\coordinate (23) at ($#2!\qoffs!90:#3$);
	\coordinate (21) at ($#2!\qoffs!-90:#1$);
	\coordinate (31) at ($#3!\qoffs!90:#1$);
	\coordinate (32) at ($#3!\qoffs!-90:#2$);
	
	\def\nqhedge{
		(13) let \p1=($(13)-#1$), \p2=($(12)-#1$) in
		arc[start angle={atan2(\y1,\x1)}, delta angle={atan2(\y2,\x2)-atan2(\y1,\x1)-360*(atan2(\y2,\x2)-atan2(\y1,\x1)>0)}, x radius=\qoffs, y radius=\qoffs] --
		(21) let \p1=($(21)-#2$), \p2=($(23)-#2$) in
		arc[start angle={atan2(\y1,\x1)}, delta angle={atan2(\y2,\x2)-atan2(\y1,\x1)-360*(atan2(\y2,\x2)-atan2(\y1,\x1)>0)}, x radius=\qoffs, y radius=\qoffs] --
		(32) let \p1=($(32)-#3$), \p2=($(31)-#3$) in
		arc[start angle={atan2(\y1,\x1)}, delta angle={atan2(\y2,\x2)-atan2(\y1,\x1)-360*(atan2(\y2,\x2)-atan2(\y1,\x1)>0)}, x radius=\qoffs, y radius=\qoffs] --
		cycle}
	
	\ifx\relax#5\relax
	\def\qlwidth{1pt}
	\else
	\def\qlwidth{#5}
	\fi
	
	\ifx\relax#7\relax
	\fill \nqhedge;
	\else
	\fill[#7]\nqhedge;
	\fi
	
	\ifx\relax#6\relax
	\draw[line width=\qlwidth,rounded corners=\qoffs]\nqhedge;
	\else
	\draw[line width=\qlwidth,#6]\nqhedge;
	\fi
}
\newcommand{\redge}[8]{
	
	\ifx\relax#5\relax
	\def\qoffs{0pt}
	\else
	\def\qoffs{#5}
	\fi
	
	\def\rhedge{
		($#1+#4!\qoffs!-90:#2-#4$) -- 
		($#2+#1!\qoffs!-90:#3-#1$) -- 
		($#3+#2!\qoffs!-90:#4-#2$) -- 
		($#4+#3!\qoffs!-90:#1-#3$) -- cycle}

	\coordinate (12) at ($#1!\qoffs!90:#2$);
	\coordinate (14) at ($#1!\qoffs!-90:#4$);
	\coordinate (23) at ($#2!\qoffs!90:#3$);
	\coordinate (21) at ($#2!\qoffs!-90:#1$);
	\coordinate (34) at ($#3!\qoffs!90:#4$);
	\coordinate (32) at ($#3!\qoffs!-90:#2$);
	\coordinate (41) at ($#4!\qoffs!90:#1$);
	\coordinate (43) at ($#4!\qoffs!-90:#3$);
	
	\def\nrhedge{
		(14) let \p1=($(14)-#1$), \p2=($(12)-#1$) in 
		arc[start angle={atan2(\y1,\x1)}, delta angle={atan2(\y2,\x2)-atan2(\y1,\x1)-360*(atan2(\y2,\x2)-atan2(\y1,\x1)>0)}, x radius=\qoffs, y radius=\qoffs] --
		(21) let \p1=($(21)-#2$), \p2=($(23)-#2$) in 
		arc[start angle={atan2(\y1,\x1)}, delta angle={atan2(\y2,\x2)-atan2(\y1,\x1)-360*(atan2(\y2,\x2)-atan2(\y1,\x1)>0)}, x radius=\qoffs, y radius=\qoffs] --
		(32) let \p1=($(32)-#3$), \p2=($(34)-#3$) in 
		arc[start angle={atan2(\y1,\x1)}, delta angle={atan2(\y2,\x2)-atan2(\y1,\x1)-360*(atan2(\y2,\x2)-atan2(\y1,\x1)>0)}, x radius=\qoffs, y radius=\qoffs] --
		(43) let \p1=($(43)-#4$), \p2=($(41)-#4$) in 
		arc[start angle={atan2(\y1,\x1)}, delta angle={atan2(\y2,\x2)-atan2(\y1,\x1)-360*(atan2(\y2,\x2)-atan2(\y1,\x1)>0)}, x radius=\qoffs, y radius=\qoffs] --
		cycle}
	
	\ifx\relax#6\relax
	\def\rlwidth{1pt}
	\else
	\def\rlwidth{#6}
	\fi
	
	\ifx\relax#8\relax
	\fill \nrhedge;
	\else
	\fill[#8]\nrhedge;
	\fi
	
	\ifx\relax#7\relax
	\draw[line width=\rlwidth,rounded corners=\qoffs]\nrhedge;
	\else
	\draw[line width=\rlwidth,#7]\nrhedge;
	\fi
}
\let\epsilon=\varepsilon
\let\eps=\epsilon
\let\rho=\varrho
\let\theta=\vartheta
\def\NN{{\mathds N}}
\newcommand{\cA}{\mathcal{A}}
\newcommand{\cB}{\mathcal{B}}
\newcommand{\cC}{\mathcal{C}}
\newcommand{\cE}{\mathcal{E}}
\newcommand{\cF}{\mathcal{F}}
\newcommand{\cQ}{\mathcal{Q}}
\newcommand{\cR}{\mathcal{R}}
\newcommand{\cS}{\mathcal{S}}
\newcommand{\cT}{\mathcal{T}}
\newcommand{\gM}{\mathfrak{M}}
\newcommand{\fB}{\mathfrak{B}}
\newtheoremstyle{note}  {4pt}  {4pt}  {\sl}  {}  {\bfseries}  {.}  {.5em}          {}
\newtheoremstyle{introthms}  {3pt}  {3pt}  {\itshape}  {}  {\bfseries}  {.}  {.5em}          {\thmnote{#3}}
\newtheoremstyle{remark}  {2pt}  {2pt}  {\rm}  {}  {\bfseries}  {.}  {.3em}          {}
\theoremstyle{plain}
\newtheorem{theorem}{Theorem}[section]
\newtheorem{lemma}[theorem]{Lemma}
\newtheorem{constr}{Construction}
\newtheorem{cor}[theorem]{Corollary}
\newtheorem{fact}[theorem]{Fact}
\newtheorem{claim}[theorem]{Claim}
\theoremstyle{note}
\newtheorem{dfn}[theorem]{Definition}
\theoremstyle{remark}
\newtheorem{remark}[theorem]{Remark}
\newtheorem{exmpl}[theorem]{Example}
\newcommand*\patchAmsMathEnvironmentForLineno[1]{
	\expandafter\let\csname old#1\expandafter\endcsname\csname #1\endcsname
	\expandafter\let\csname oldend#1\expandafter\endcsname\csname end#1\endcsname
	\renewenvironment{#1}
	{\linenomath\csname old#1\endcsname}
	{\csname oldend#1\endcsname\endlinenomath}}
\newcommand*\patchBothAmsMathEnvironmentsForLineno[1]{
	\patchAmsMathEnvironmentForLineno{#1}
	\patchAmsMathEnvironmentForLineno{#1*}}
\newcommand{\overrighharpoonup}[1]{\ThisStyle{		\vbox {\m@th\ialign{##\crcr
				\rightharpoonupfill \crcr
				\noalign{\kern-\p@\nointerlineskip}
				$\hfil\SavedStyle#1\hfil$\crcr}}}}
\def\rightharpoonupfill{	$\SavedStyle\m@th\mkern+0.8mu\cleaders\hbox{$\shortbar\mkern-4mu$}\hfill\rightharpoonuptip\mkern+0.8mu$}
\def\rightharpoonuptip{	\raisebox{\z@}[2pt][1pt]{\scalebox{0.55}{$\SavedStyle\rightharpoonup$}}}
\def\shortbar{	\smash{\scalebox{0.55}{$\SavedStyle\relbar$}}}
\newcommand{\overlefharpoonup}[1]{\ThisStyle{		\vbox {\m@th\ialign{##\crcr
				\leftharpoonupfill \crcr
				\noalign{\kern-\p@\nointerlineskip}
				$\hfil\SavedStyle#1\hfil$\crcr}}}}
\def\leftharpoonupfill{	$\SavedStyle\m@th\mkern+0.8mu\cleaders\hbox{$\shortbar\mkern-4mu$}\hfill\leftharpoonuptip\mkern+0.8mu$}
\def\leftharpoonuptip{	\raisebox{\z@}[2pt][1pt]{\scalebox{0.55}{$\SavedStyle\leftharpoonup$}}}
\newsavebox\myboxA
\newsavebox\myboxB
\newlength\mylenA
\newcommand*\xoverline[2][0.75]{	\sbox{\myboxA}{$\m@th#2$}	\setbox\myboxB\null	\ht\myboxB=\ht\myboxA	\dp\myboxB=\dp\myboxA	\wd\myboxB=#1\wd\myboxA	\sbox\myboxB{$\m@th\overline{\copy\myboxB}$}	\setlength\mylenA{\the\wd\myboxA}	\addtolength\mylenA{-\the\wd\myboxB}	\ifdim\wd\myboxB<\wd\myboxA	\rlap{\hskip 0.5\mylenA\usebox\myboxB}{\usebox\myboxA}	\else
	\hskip -0.5\mylenA\rlap{\usebox\myboxA}{\hskip 0.5\mylenA\usebox\myboxB}	\fi}
\let\vn=\varnothing
\let\lra=\longrightarrow
\let\sm=\smallsetminus
\def\ds{d_{\star}}
\DeclareSymbolFont{symbolsC}{U}{txsyc}{m}{n}
\DeclareMathSymbol{\strictif}{\mathrel}{symbolsC}{74}
\begin{document}
	\title[Minimum degree in simplicial complexes]
	{Minimum degree in simplicial complexes}
	
	\author[Chr.~Reiher]{Christian Reiher}
	\address{Fachbereich Mathematik, Universit\"at Hamburg, Hamburg, Germany}
	\email{Christian.Reiher@uni-hamburg.de}
	
	\author[B. Sch\"ulke]{Bjarne Sch\"ulke}
	\address{Extremal Combinatorics and Probability Group, 
		Institute for Basic Science, Daejeon, South Korea}
	\email{schuelke@ibs.re.kr}
	\thanks{The second author was partially supported by the Young Scientist 
		Fellowship IBS-R029-Y7.}
	
	\subjclass[2020]{05D05, 05E45, 55U10}
	\keywords{Extremal set theory, simplicial complexes}
	
	\begin{abstract}
		Given~$d\in\NN$, let~$\alpha(d)$ be the largest real number such 
		that every abstract simplicial complex~$\cS$ 
		with~$0<\vert\cS\vert\leq\alpha(d)\vert V(\cS)\vert$ has 
		a vertex of degree at most~$d$.
		We extend previous results by Frankl, Frankl and Watanabe, and Piga and 
		Sch\"ulke by proving that for all integers~$d$ and~$m$ with~$d\geq m\geq 1$, 
		we have~$\alpha(2^d-m)=\frac{2^{d+1}-m}{d+1}$.
		Similar results were obtained independently by Li, Ma, and Rong.
	\end{abstract}
	
	\maketitle
	
	\section{Introduction}\label{sec:intro}
	A (finite, abstract) {\it simplicial complex} is a non-empty finite 
	set~$\cS$ of finite sets that is closed under taking subsets, 
	meaning that for all~$S'\subseteq S\in\cS$, we have~$S'\in\cS$.
	According to this convention, the empty set $\vn$ is an element of every 
	simplicial complex. 
	We call~$V(\cS)=\bigcup_{S\in\cS}S$ the {\it vertex set} 
	of~$\cS$ and the members of $\cS$ {\it edges}. 
	The {\it degree}~$d(x)$ of a vertex~$x\in V(\cS)$ is the number of edges 
	containing~$x$, that is,~$d(x)=\vert\{e\in\cS\colon x\in e\}\vert$.
	The {\it minimum degree} of~$\cS$ is defined 
	as~$\delta(\cS)=\min\{d(x)\colon x\in V(\cS)\}$. The minimum 
	degree of the {\it trivial complex}~$\cS=\{\vn\}$ is $\delta(\cS)=\infty$.
	
	For~$d\in\NN_0$, let~$\alpha(d)$ be the largest real number such that 
	every simplicial complex $\cS$
	with~$\vert\cS\vert\leq\alpha(d)\vert V(\cS)\vert$ 
	satisfies~$\delta(\cS)\leq d$. 
	In other words, $\alpha(d)$ is the largest constant for which the lower 
	bound~$|\cS|>\alpha(d) |V(\cS)|$ can be inferred from~$\delta(\cS)>d$. 
	(For the existence of this maximum we refer to Section~\ref{sec:upperbounds}). 
	Currently, no explicit formula for $\alpha(d)$ is in sight. 
	In this work, we determine~$\alpha(2^d-m)$ for all integers~$d$ and~$m$ 
	with~$d\geq m\geq 1$. 
	As we shall see, this means that we solve this problem for the first natural 
	regime of parameters.
	
	Historically, the investigation of~$\alpha(d)$ began in the context of traces of 
	families of finite sets.
		Frankl~\cite{F:83} reduced a certain problem on traces to the minimum 
	degree problem for simplicial complexes and obtained the first general result, 
	namely that for all~$d\in\NN$,
	\begin{align}\label{eq:Franklresult}
		\alpha(2^d-1)=\frac{2^{d+1}-1}{d+1}\,.
	\end{align}
	In addition, Frankl and Watanabe~\cite{FW:94} proved that for all~$d\in\NN$,
	\begin{align}\label{eq:FranklWatanaberesult}
		\alpha(2^d-2)=\frac{2^{d+1}-2}{d+1}
	\end{align}
	and~$\alpha(2^d)=\frac{2^{d+1}-1}{d+1}+\frac{1}{2}$.
	Piga and Sch\"ulke~\cite{PS:21} extended these results to values further away from 
	powers of two by showing that for all~$d,c\in\NN$ with~$d\geq 4c$, 
	we have~$\alpha(2^d-c)=\frac{2^{d+1}-c}{d+1}$.
	We refer the reader to the introduction of~\cite{PS:21} and the references therein 
	for further early results. Our main contribution is the following.
	
	\begin{theorem}\label{thm:main}
		Let~$d$ and~$m$ be integers with~$d\geq m\geq1$.
		Then~$\alpha(2^d-m)=\frac{2^{d+1}-m}{d+1}$.
	\end{theorem}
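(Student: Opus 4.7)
The theorem decomposes into two halves. The upper bound $\alpha(2^d-m)\leq\frac{2^{d+1}-m}{d+1}$ follows from a construction exhibiting complexes with the extremal ratio and $\delta>2^d-m$, while the lower bound $\alpha(2^d-m)\geq\frac{2^{d+1}-m}{d+1}$ asserts that every non-trivial simplicial complex $\cS$ with $\delta(\cS)\geq 2^d-m+1$ satisfies $(d+1)|\cS|>(2^{d+1}-m)|V(\cS)|$. I would handle these separately.

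For the construction, consider $\cS_0\subseteq 2^{[d+1]}$ obtained from the full $(d+1)$-simplex by deleting the top set $[d+1]$ together with the $m-2$ facets $[d+1]\setminus\{v_1\},\ldots,[d+1]\setminus\{v_{m-2}\}$ for distinct vertices $v_i$. Since $m\leq d$, such vertices exist, and a direct check shows that each vertex of $[d+1]$ lies in at most $m-1$ of the removed sets, so $\delta(\cS_0)\geq 2^d-m+1$, while $|\cS_0|=2^{d+1}-m+1$. Taking $k$ vertex-disjoint copies of $\cS_0$ glued at the common empty edge produces a complex $\cS_k$ with $|V(\cS_k)|=k(d+1)$ and $|\cS_k|=k(2^{d+1}-m)+1$, so $|\cS_k|/|V(\cS_k)|$ tends to $(2^{d+1}-m)/(d+1)$ from above as $k\to\infty$.

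For the lower bound I would proceed by induction on $|V(\cS)|$. The base case $|V(\cS)|=d+1$ reduces to the combinatorial claim that any upper-closed family $\cE\subseteq 2^{[d+1]}$ with every vertex lying in at most $m-1$ sets satisfies $|\cE|\leq m-1$; this should follow from a short induction on $m$, analysing the possible minimal members of $\cE$ and showing that the only extremal configuration is the one appearing in the construction. For the inductive step, the natural move is to choose a vertex $v$ and apply the inductive hypothesis to $\mathrm{del}(v)=\{S\in\cS\colon v\notin S\}$, using $|\cS|=d_\cS(v)+|\mathrm{del}(v)|$ and $|V(\mathrm{del}(v))|\leq |V(\cS)|-1$. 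The main obstacle is that $\delta(\mathrm{del}(v))$ can collapse: any vertex $u\neq v$ loses its codegree with $v$ (the degree of $u$ in $\mathrm{lk}(v)$), which in the worst case is nearly all of $d_\cS(v)$, so the inductive hypothesis need not transfer. The heart of the proof, and presumably what pushes the range from the earlier $d\geq 4m$ to the sharp $d\geq m$, will therefore be a careful selection of $v$ --- perhaps one whose link $\mathrm{lk}(v)$ is structurally close to a full simplex, bounding the codegree losses uniformly --- together with a refined accounting scheme, possibly a weighted strengthening of the target inequality that carries extra slack on vertices of near-minimum degree and absorbs the drop when passing to $\mathrm{del}(v)$.
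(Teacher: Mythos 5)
Your upper bound is fine and coincides with the paper's (Construction~\ref{constr:gen} together with the disjoint-copies trick of Lemma~\ref{lem:constrgolbfromloc}); the degree and size counts for $\cS_0$ are correct. The lower bound, however, is only an outline, and the outline defers exactly the part that constitutes the actual proof. Your proposed induction on $|V(\cS)|$ via vertex deletion runs into the obstacle you yourself name: after removing $v$, every neighbour of $v$ may lose up to $d_\cS(v)$ from its degree, so the hypothesis $\delta\ge 2^d-m+1$ does not transfer, and you offer no concrete mechanism --- only the hope of ``a careful selection of $v$'' and ``a refined accounting scheme, possibly a weighted strengthening''. That weighted strengthening is precisely where all the work lies, and nothing in the proposal indicates what it should be or why it would close the induction; as it stands the inductive step simply does not exist. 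Even the base case $|V(\cS)|=d+1$ (your claim that an up-set $\cE\subseteq\powerset([d+1])$ of sets of size at least $2$ with every vertex in at most $m-1$ members has $|\cE|\le m-1$) is true but not immediate --- naive degree counting gives only $|\cE|\le (m-1)\tfrac{d+1}{d+1-r}$ for the relevant $r$ --- and you only gesture at ``a short induction on $m$''.

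For comparison, the paper does not induct on the number of vertices at all. It replaces edge counting by vertex weights $k(x)$ built from $\sum_{x\in F}\tfrac1{|F|}$, bounds these from below by applying a weighted Kruskal--Katona theorem (Lemma~\ref{lem:Katona}) to the link of $x$, and proves sharp local lemmas (Lemmas~\ref{lem:local} and~\ref{lem:unified}) for complexes with at least $2^d-m+1$ edges. Since the pointwise bound is too weak for vertices whose link lives on only $d$ vertices, it then groups such vertices into ``conglomerates'' (near-complete $(d+1)$-sets, shown to overlap in at most one vertex) and averages the weights over each conglomerate. Crucially, to run a minimality argument it generalises the statement to several $\NN_{\ge2}$-complexes (``mountains'') plus an auxiliary function $f$ (Theorem~\ref{thm:meta}), and takes a counterexample minimising the number of edges of $\fB_1$ --- an induction on edges, not vertices, which is compatible with the degree condition in a way that vertex deletion is not. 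None of this machinery, nor any workable substitute for it, appears in your proposal, so the lower bound remains unproved.
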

	As we will see in Section~\ref{sec:upperbounds}, this formula is not correct 
	for~$m>d$. However, we illustrate that our methods can be used to push past this 
	natural barrier by showing the following result, which solves a conjecture by 
	Frankl and Watanabe~\cite{FW:94}.
	
	\begin{theorem}\label{thm:11}
		We have~$\alpha(11)=\frac{53}{10}$.
	\end{theorem}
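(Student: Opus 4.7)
The equality $\alpha(11)=53/10$ decomposes into two matching bounds. For the upper bound $\alpha(11)\le 53/10$, the plan is to exhibit a family of simplicial complexes with $\delta\ge 12$ whose ratio $|\cS|/|V(\cS)|$ tends to $53/10$; this construction naturally belongs in Section~\ref{sec:upperbounds} alongside the constructions supporting Theorem~\ref{thm:main}. The substantive work is the lower bound, namely that every simplicial complex $\cS$ with $\delta(\cS)\ge 12$ satisfies $|\cS|>\tfrac{53}{10}\,|V(\cS)|$ strictly.

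\textbf{Lower bound plan.} The starting observation is that Theorem~\ref{thm:main} applied with $(d,m)=(4,4)$ gives $\alpha(12)=28/5=56/10>53/10$. Consequently, any $\cS$ with $\delta(\cS)\ge 13$ already satisfies the desired bound, so we may assume that $\delta(\cS)=12$ and fix a vertex $v_\star$ with $d(v_\star)=12$. I would next apply the shifting and compression operations developed in the proof of Theorem~\ref{thm:main}, which preserve $|V(\cS)|$ and $|\cS|$ and do not decrease $\delta$, bringing the link of $v_\star$ into a canonical form. Since this link is itself a simplicial complex with exactly $12$ elements, after compression it is supported on only a handful of other vertices and has one of only finitely many shapes. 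Having fixed the shape, I would peel off the star of $v_\star$ (contributing exactly $12$ edges) and examine the subcomplex $\cS'$ on $V(\cS)\setminus\{v_\star\}$; the degree of each remaining vertex drops by a controlled amount, namely its degree in the link of $v_\star$. A lower bound on $|\cS'|$, obtained via Theorem~\ref{thm:main} or direct counting, combined with the $12$ edges of the star, should then yield the target inequality.

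\textbf{Main obstacle.} The principal difficulty is that the parameters $(d,m)=(4,5)$ lie outside the range $d\ge m$ of Theorem~\ref{thm:main}, so the naive induction loses the safety margin that drives the proof of Theorem~\ref{thm:main}. The hardest step is expected to be the finite case analysis on the shape of the link of $v_\star$: in the tightest configurations there is almost no slack, and one must extract additional global information about $\cS$, typically from those vertices of the link whose degrees in $\cS'$ have dropped substantially, in order to close the gap. This is the sense in which the proof of Theorem~\ref{thm:11} genuinely pushes beyond the methods of Theorem~\ref{thm:main}, and matching the extremal construction in Section~\ref{sec:upperbounds} to the near-tight cases of the case analysis is what pins down the value $53/10$ exactly.
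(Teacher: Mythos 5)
Your upper bound is fine in outline (the paper's Construction~\ref{constr:beyond} with $d=4$ and Lemma~\ref{lem:constrgolbfromloc} give exactly $\alpha(11)\le\frac{53}{10}$), and the reduction to $\delta(\cS)=12$ via $\alpha(12)=\frac{28}{5}$ is correct. But the lower-bound plan has a genuine gap at its core. First, there is no ``shifting and compression'' machinery in the proof of Theorem~\ref{thm:main} (nor a standard one) that canonicalises the link of a chosen vertex while preserving the global minimum degree; the paper only applies Kruskal--Katona-type compression \emph{locally}, to bound the weight $\sum_{F}\frac1{|F|+1}$ of a single link, never as a global operation on $\cS$ with $\delta$ preserved. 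Second, and more fundamentally, peeling off the star of $v_\star$ destroys the degree hypothesis: a neighbour of $v_\star$ can lose up to $11$ from its degree, so $\cS'$ need not satisfy any minimum-degree condition and neither Theorem~\ref{thm:main} nor direct counting gives a usable lower bound on $|\cS'|$. Note also that a purely local, one-vertex analysis cannot suffice in principle: Frankl's bound (Lemma~\ref{lem:Fr}) gives only $\beta(11)=\frac{31}{6}<\frac{53}{10}$, so some vertices genuinely contribute less than $\frac{53}{10}$ and the deficit must be recovered by averaging over groups of vertices, not by induction on one deleted star.

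The step you label the ``main obstacle'' is in fact the entire proof, and your sketch defers it rather than resolving it. The paper handles it by proving a strengthened statement (Theorem~\ref{thm:beyondspec}) about mountains together with three auxiliary functions $f,g,h$ with carefully chosen weights $\frac12,\frac{51}{20},\frac{19}{5}$; these weights are exactly what makes it legal, in a minimal counterexample, to delete maximal edges or whole neighbourhoods (compensating the lost degree through $f,g,h$) without breaking the hypothesis, and the final inequality is obtained by an amortised analysis: per-vertex weights $k(x)$, the observation that any two $4$-sets of $\fB_1$ meet in at most one vertex, and an averaging of $k$ over each $4$-set (the analogue of the quartette/conglomerate argument). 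Without some replacement for this global averaging and the deletion-compatible bookkeeping, the case analysis of a single link cannot close the gap, so as it stands the proposal does not prove the lower bound.
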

	
	All other values of $\alpha(d)$ for $d\le 16$ have been known before 
	and for the reader's convenience we include a complete table. 
	
	\smallskip
	
	\begin{center}
		\begin{tabular}{c|c|c|c|c|c|c|c|c|c|c|c|c|c|c|c|c|c}
			$d=$ & $0$ & $1$ & $2$ & $3$ & $4$ & $5$ & $6$ & $7$ & $8$ & $9$ & $10$ & $11$ & $12$
			& $13$ & $14$ & $15$ & $16$ \\ \hline
			$\alpha(d)=$ & $1$ & $\frac 32$ & $2$ & $\frac 73$ & $\frac{17}6$ & $\frac{13}4$ 
			& $\frac 72$ & $\frac{15}4$ & $\frac{17}4$ & $\frac{65}{14}$ & $5$ & $\frac{53}{10}$
			& $\frac{28}5$ & $\frac{29}5$ & $6$ & $\frac{31}5$ & $\frac{67}{10}$
		\end{tabular} 
	\end{center}
	
	\smallskip
	
	Similar results were obtained independently by Li, Ma, and Rong~\cite{LMR:24}.
	Our methods are also applicable to cases where $d$ is slightly larger than a power 
	of two. For instance, we checked that $\alpha(17)=\frac{50}7$ and $\alpha(20)=8$, 
	and hope to return to this topic in the near future. 
	
	\subsection*{Overview}
	In Section~\ref{sec:upperbounds} we establish the existence of~$\alpha(d)$ and discuss upper bound constructions.
	The main part of this work concerns the lower bound:
	Given a simplicial complex with a certain minimum degree, we need to show that 
	it has many edges.
	Very roughly speaking, our proof follows the strategy developed in~\cite{PS:21}, 
	which has a local and a global part.
	Instead of counting the edges directly, it is convenient to instead consider weights 
	of vertices, which are terms similar to~$\sum_{x\in F\in\cS}\frac{1}{\vert F\vert}$.
	The minimum degree condition gives some lower bound for these weights.
	However, this lower bound is too small to finish directly.
	For this, we group vertices together into ``conglomerates'' (which are similar to 
	``clusters'' in~\cite{PS:21}) with the intention that the average weight in each
	conglomerate is sufficiently large.
	To bound the average weight in each conglomerate, we need local results about 
	simplicial complexes on a constant number of vertices with a certain number of edges.
	
	The improvements over~\cite{PS:21} come from several places.
	One is that the local considerations here are much more precise; we will discuss these 
	in Section~\ref{sec:local}.
	Another improvement comes from the fact that conglomerates are more flexible than 
	clusters---for instance, two conglomerates might intersect, albeit in at most one 
	vertex.
	Further, we introduce an auxiliary function on the vertices and a more general setting 
	with potentially several simplicial complexes on the same vertex set.
	Together, this allows us to perform an induction on the number of edges of one of the 
	simplicial complexes, which normally would disturb the minimum degree condition.
	The global part of our argument is given in Section~\ref{sec:mountains}.
	There we prove a more general result (Theorem~\ref{thm:meta}) and deduce 
	Theorem~\ref{thm:main} from it.
	In Section~\ref{sec:newreg} we prove Theorem~\ref{thm:11}.
	Section~\ref{sec:tools} serves as a collection of several tools, starting with inequalities, continuing with a weighted version of the Kruskal-Katona theorem, which is at the core of the local considerations in Section~\ref{sec:local}, and culminating in a first illustration of our method (see Lemma~\ref{lem:alpha=5gen}).
	
	\section{Upper bounds}\label{sec:upperbounds}
	
	Let us first check that for every $d\in\NN_0$ the maximum~$\alpha(d)$ does indeed exist. Given $d$ we consider the set 
	\[
	M=\{\alpha\in\mathds{R}\colon \text{Every simplicial complex $\cS$ with }
	\vert\cS\vert\leq\alpha\vert V(\cS)\vert
	\text{ satisfies }
	\delta(\cS)\leq d\}\,,
	\]
	which is bounded from above, as there are non-trivial complexes $\cS$ 
	with $\delta(\cS)>d$.
	Thus the supremum~$\alpha_\star=\sup(M)$ exists and it remains to 
	show $\alpha_\star\in M$. 
	Let~$\cS$ be any simplicial complex  
	with~$\vert\cS\vert\leq\alpha_\star\vert V(\cS)\vert$,
	take an isomorphic copy~$\cS'$ of~$\cS$ 
	with~${V(\cS)\cap V(\cS')=\vn}$, and observe 
	that $\vn\in\cS\cap\cS'$ 
	yields~$\vert\cS\cup\cS'\vert<\alpha_\star\vert V(\cS\cup\cS')\vert$. 
	Hence, there is some~$\alpha'\in M$ such 
	that~$\vert\cS\cup\cS'\vert\leq\alpha'\vert V(\cS\cup\cS')\vert$.
	Since~$\cS\cup\cS'$ is a simplicial complex, 
	the definition of~$M$ leads us to~$\delta(\cS)=\delta(\cS\cup\cS')\leq d$, 
	which proves~$\alpha_\star\in M$.
	
	It follows straight from the definition of~$\alpha(d)$ that every non-trivial simplicial 
	complex~$\cS$ with~$\delta(\cS)>d$ gives rise to an upper 
	bound $\alpha(d)\le|\cS|/|V(\cS)|$. We can again improve upon this by working 
	with many disjoint copies of~$\cS$. 
	
	\begin{lemma}\label{lem:constrgolbfromloc}
		If~$d\in\NN_0$ and~$\cS$ denotes a non-trivial simplicial complex 
		with~$\delta(\cS)>d$, 
		then
		\[
		\alpha(d)\leq\frac{\vert\cS\vert-1}{\vert V(\cS)\vert}\,.
		\]
	\end{lemma}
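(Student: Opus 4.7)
The plan is to exploit disjoint unions of $\cS$ together with the fact that all simplicial complexes share the edge $\vn$, turning the additive ``$-1$'' in the numerator into the effect of this shared edge becoming asymptotically negligible. This is essentially the same trick that was used to establish $\alpha_\star\in M$ in the preceding paragraph, applied more quantitatively.

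Concretely, for every integer $N\geq 1$ I would take $N$ pairwise vertex-disjoint isomorphic copies $\cS_1,\dots,\cS_N$ of $\cS$ and form the union $\cT_N=\cS_1\cup\dots\cup\cS_N$. This is a simplicial complex, its vertex set has size $N\vert V(\cS)\vert$, and because $\vn$ is the unique edge lying in more than one $\cS_i$ we have $\vert\cT_N\vert=N(\vert\cS\vert-1)+1$. Moreover, each vertex lies in only one of the $\cS_i$, so its degree in $\cT_N$ equals its degree in $\cS$, whence $\delta(\cT_N)=\delta(\cS)>d$.

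Now I would invoke the contrapositive of the defining property of $\alpha(d)$: since $\delta(\cT_N)>d$, we must have $\vert\cT_N\vert>\alpha(d)\vert V(\cT_N)\vert$, i.e.
\[
N(\vert\cS\vert-1)+1>\alpha(d)\cdot N\vert V(\cS)\vert\,,
\]
which rearranges to
\[
\alpha(d)<\frac{\vert\cS\vert-1}{\vert V(\cS)\vert}+\frac{1}{N\vert V(\cS)\vert}\,.
\]
Letting $N\to\infty$ yields the desired bound $\alpha(d)\leq\frac{\vert\cS\vert-1}{\vert V(\cS)\vert}$.

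There is really no obstacle here; the only subtle point is remembering that $\vn$ is a common edge of all $\cS_i$ so that $\vert\cT_N\vert$ is $N(\vert\cS\vert-1)+1$ rather than $N\vert\cS\vert$, which is precisely what produces the strict improvement $\vert\cS\vert-1$ over the naive bound $\vert\cS\vert/\vert V(\cS)\vert$.
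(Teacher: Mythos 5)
Your proof is correct and follows essentially the same route as the paper: take $n$ vertex-disjoint copies of $\cS$, note that their union has $n(\vert\cS\vert-1)+1$ edges and $n\vert V(\cS)\vert$ vertices while preserving $\delta>d$, apply the defining property of $\alpha(d)$, and let $n\to\infty$. The counting of the shared empty set and the limiting argument match the paper's proof exactly.
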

	
	\begin{proof}
		For any~$n\in\NN$, let~$\cS_1,\dots,\cS_n$ be pairwise 
		vertex-disjoint copies of~$\cS$. 
		Their union $\cS_\star=\cS_1\cup\dots\cup\cS_n$
		is a simplicial complex with~$\delta(\cS_\star)>d$, whence
		\[
		\alpha(d)
		\leq
		\frac{\vert\cS_\star\vert}{\vert V(\cS_\star)\vert}
		=
		\frac{n(\vert\cS\vert-1)+1}{n\vert V(\cS)\vert}\,.
		\]
		The result follows by letting~$n$ go to infinity.
	\end{proof}
	
	Now we can easily generate some (well-known) upper bounds on~$\alpha(\cdot)$.
	
	\begin{constr}\label{constr:base}
		For~$d\in\NN_0$, the power set~$\cS=\powerset([d+1])$ is a simplicial 
		complex with~$\vert\cS\vert=2^{d+1}$,~$\vert V(\cS)\vert=d+1$, 
		and~$\delta(\cS)=2^d$. 
		Thus, Lemma~\ref{lem:constrgolbfromloc} 
		yields~$\alpha(2^d-1)\leq\frac{2^{d+1}-1}{d+1}$.
	\end{constr}
	
	This can be generalised by omitting some `large' sets.
	
	\begin{constr}\label{constr:gen}
		For integers~$d$ and~$m$ with~$d\geq m\geq2$, the set system 
		\[
		\cS
		=
		\powerset([d+1])
		\setminus
		\Big(\{[d+1]\}\cup\big\{[d+1]\setminus\{i\}\colon i\in[m-2]\big\}\Big)
		\] 
		is a simplicial complex with~$\vert\cS\vert=2^{d+1}-m+1$,~$\vert V(\cS)\vert=d+1$, and~$\delta(\cS)=2^d-m+1$.
		Using Lemma~\ref{lem:constrgolbfromloc}, 
		we get~$\alpha(2^d-m)\leq\frac{2^{d+1}-m}{d+1}$.
	\end{constr}
	
	\begin{constr}\label{constr:beyond}
		For~$d\geq2$, the simplicial complex
		\begin{align*}
			\cS
			=&
			\{A\subseteq[d+1]\colon \vert A\vert\leq d-1\}\\
			&\cup\{A\subseteq[d+2,2d+2]\colon \vert A\vert\leq d-1\}\\
			&\cup\{\{1,d+2\},[2,d+1],[d+3,2d+2]\}
		\end{align*}
		satisfies~$\vert V(\cS)\vert=2(d+1)$,~$\vert\cS\vert=2(2^{d+1}-d-1)$, 
		and~$\delta(\cS)=2^d-d$, 
		whence
		\begin{equation}\label{eq:1551}
			\alpha(2^d-d-1)\leq\frac{2^{d+1}-d-3/2}{d+1}\,.
		\end{equation}
	\end{constr}
	
	This means that for~$m=d+1$, the formula in Theorem~\ref{thm:main} does not hold.
	Let us also note that we have already shown all upper bounds promised in 
	the introduction (as the case $d=4$ of~\eqref{eq:1551} 
	reads $\alpha(11)\le\frac{53}{10}$), so that it remains to establish the 
	corresponding lower bounds in the remainder of this article. 
	
	\section{Preliminaries}\label{sec:tools}
	
	\subsection{Inequalities}
	
	In this subsection we collect four estimates, which will be required later. 
	
	\begin{fact}\label{fa:ineq1}
		Given integers~$d\geq 3$ and~$t\ge 1$, let~$n_1,\dots,n_t\in\NN_0$ 
		be such that~$\sum_{\tau\in[t]}n_{\tau}\leq d$.
		If~$\sum_{\tau\in[t]}(2^{n_{\tau}}-1)\geq 2^d-d$,
		then there is an index~$\tau^\star\in[t]$ such that~$n_{\tau^\star}=d$ 
		and~$n_{\tau}=0$ for all~$\tau\in[t]\setminus\{\tau^\star\}$.
	\end{fact}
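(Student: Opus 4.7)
The plan is to first discard any indices $\tau$ with $n_\tau=0$, since such indices contribute $0$ to $\sum_\tau (2^{n_\tau}-1)$ and $0$ to $\sum_\tau n_\tau$; so if we let $I=\{\tau\in[t]\colon n_\tau\ge1\}$ and $s=|I|$, it suffices to prove $s=1$ together with $n_{\tau^\star}=d$ for the unique surviving index. The inequality $\sum_\tau(2^{n_\tau}-1)\ge 2^d-d\ge 5$ immediately rules out $s=0$, so the issue is to exclude $s\ge 2$ and to pin down the value $d$ in the case $s=1$.

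The core computation is an upper bound on $\sum_{\tau\in I}(2^{n_\tau}-1)$ for fixed $s\ge 2$. Since $n\mapsto 2^n$ is convex, under the constraints $n_\tau\ge 1$ and $\sum_{\tau\in I}n_\tau\le d$, the sum is maximised by pushing one value up to $d-s+1$ and leaving the remaining $s-1$ terms equal to $1$. Thus
\[
\sum_{\tau\in I}(2^{n_\tau}-1)\le 2^{d-s+1}+2(s-1)-s=2^{d-s+1}+s-2.
\]
So my plan is to show $2^{d-s+1}+s-2<2^d-d$ whenever $s\ge 2$ and $d\ge 3$; equivalently, setting $g(s)=2^d-2^{d-s+1}-d-s+2$, to prove $g(s)>0$ in this range. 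A direct check gives $g(2)=2^{d-1}-d\ge 1$ for $d\ge 3$, while $g(s+1)-g(s)=2^{d-s}-1\ge 0$ for $s\le d$, so $g$ is non-decreasing from $g(2)>0$ and the desired strict inequality holds throughout $2\le s\le d$. This contradicts the hypothesis, forcing $s=1$.

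Once $s=1$, there is a unique $\tau^\star\in[t]$ with $n_{\tau^\star}\ge 1$ and we have $2^{n_{\tau^\star}}-1\ge 2^d-d$ together with $n_{\tau^\star}\le d$. If $n_{\tau^\star}\le d-1$ then $2^{n_{\tau^\star}}-1\le 2^{d-1}-1$, which is less than $2^d-d$ since $2^{d-1}>d-1$ for $d\ge 3$; hence $n_{\tau^\star}=d$, finishing the proof.

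The main obstacle is essentially just the monotonicity argument controlling $g(s)$ uniformly for all $s\in\{2,\dots,d\}$; the convexity step that yields the extremal configuration (one large, the rest equal to $1$) is routine, and the case $s=1$ is a one-line verification. No further idea beyond convexity and a small monotonicity check is needed.
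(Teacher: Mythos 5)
Your proof is correct. It differs in organisation from the paper's argument, though both rest on the same elementary phenomenon (convexity of $2^x$, equivalently superadditivity of $2^x-1$). The paper first proves the merging inequality $(2^{a_1}-1)+\dots+(2^{a_m}-1)\le 2^{a_1+\dots+a_m}-1$, then assumes the positive indices split into two non-empty blocks with sums $a,b\ge 1$, and gets the contradiction in one stroke via $(2^a-1)+(2^b-1)\le 2(2^{a+b-2}-1)+2=2^{a+b-1}\le 2^{d-1}<2^d-d$; you instead fix the number $s$ of nonzero terms, identify the extremal configuration (one value $d-s+1$, the rest equal to $1$) by a convexity/exchange argument, and verify $2^{d-s+1}+s-2<2^d-d$ for all $2\le s\le d$ by checking $s=2$ and monotonicity in $s$. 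Your route costs a little extra bookkeeping (the explicit function $g(s)$ and its increments, plus the extremal-point justification, which is routine but should be spelled out, e.g.\ via the exchange $2^{n_i}+2^{n_j}\le 2^{n_i+n_j-1}+2$ for $n_i,n_j\ge 1$), whereas the paper's two-block reduction avoids tracking $s$ altogether; on the other hand your version makes the extremal configuration explicit, which some readers may find more transparent. The endgame for a single nonzero term is the same in both: $2^{d-1}-1<2^d-d$ forces $n_{\tau^\star}=d$.
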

	
	\begin{proof}
		For all~$a,b\geq0$, we have~$(2^a-1)(2^b-1)\geq0$ and, 
		thereby,~$(2^a-1)+(2^b-1)\leq2^{a+b}-1$.
		By an induction on the number of summands this yields
		\[
		(2^{a_1}-1)+\dots+(2^{a_m}-1)
		\leq
		2^{a_1+\dots+a_m}-1
		\] 
		for all~$a_1,\dots,a_m\geq 0$.
		
		Now assume for the sake of contradiction that there is a 
		partition~$[t]=A\dcup B$ such that 
		both sums~$a=\sum_{i\in A}n_i$ and~$b=\sum_{i\in B}n_i$ are positive integers. 
		Using the condition in the statement and the above observation, we get
		\begin{align*}
			2^d-d
			&\leq
			\sum_{i\in[t]}(2^{n_i}-1)
			\leq(2^a-1)+(2^b-1) \\
			&=
			2\big((2^{a-1}-1)+(2^{b-1}-1)\big)+2
			\leq 2(2^{a+b-2}-1)+2\\
			&=
			2^{a+b-1}\leq 2^{d-1}\,.
		\end{align*}    
		But this gives~$d\geq 2^{d-1}$, contrary to~$d\geq 3$.
		
		We have thereby shown that there is some~$\tau^\star\in [t]$ with~$n_\tau=0$ 
		for all~$\tau\in [t]\sm\{\tau^\star\}$. 
		Finally,~$2^{d-1}<2^d-d\le 2^{n_{\tau^\star}}-1$ yields~$n_{\tau^\star}=d$.
	\end{proof}
	
	\begin{fact}\label{fa:ineq1+1/2}
		For all integers~$d\ge 4$, we have
		\[
		\frac{2^{d+1}-1}{2(d+1)}
		\ge
		\frac{1}{2}+\frac{2d-2}{3}+\frac{(d-2)(d-3)}{4}\,.
		\]
	\end{fact}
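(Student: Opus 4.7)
The inequality compares an exponential quantity on the left with a polynomial of degree at most two on the right, so the assertion is clear asymptotically; the only issue is that the base case is tight at $d=4$ (where both sides equal $3.1$ and $3$ respectively), so a clean proof has to be slightly careful. My plan is to clear denominators and then run a short induction.

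Multiplying both sides by $12(d+1)$, the claim becomes
\[
12\cdot 2^d - 6 \;\ge\; (d+1)\bigl(6+8(d-2)+3(d-2)(d-3)\bigr)
\;=\; (d+1)(3d^2-7d+8)\,,
\]
and expanding the right-hand side turns this into the purely polynomial-vs-exponential inequality
\begin{equation}\label{eq:ineq1+1/2aux}
12\cdot 2^d \;\ge\; 3d^3 - 4d^2 + d + 14\,.
\end{equation}

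I would verify~\eqref{eq:ineq1+1/2aux} by induction on~$d\ge 4$. The base case is the direct check $192\ge 146$. For the inductive step, observe that when $d$ is replaced by $d+1$ the right-hand side of~\eqref{eq:ineq1+1/2aux} increases by exactly $3[(d+1)^3-d^3]-4[(d+1)^2-d^2]+1 = 9d^2+d$, whereas the left-hand side doubles, i.e.\ increases by $12\cdot 2^d$. Hence it suffices to show the auxiliary estimate $12\cdot 2^d \ge 9d^2 + d$ for every $d\ge 4$, which one again proves by a one-line induction (the base case $d=4$ gives $192\ge 148$, and the doubling of the left-hand side majorises the quadratic increase $9(2d+1)+1$ of the right-hand side as soon as $d\ge 4$).

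There is no real obstacle: the only mildly delicate point is choosing to prove~\eqref{eq:ineq1+1/2aux} inductively rather than trying to exhibit a single closed-form manipulation, because the gap at $d=4$ is small enough that a clumsier estimate (e.g.\ $2^{d+1}-1\ge 2^{d+1}-d-1$ followed by bounding each summand on the right separately) risks failing at the base case. Alternatively, one could simply verify~\eqref{eq:ineq1+1/2aux} at $d\in\{4,5,6,7\}$ by hand and then invoke, for $d\ge 7$, the crude bound $2^d\ge d^3$, but the inductive argument above is slightly cleaner.
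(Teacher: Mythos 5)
There is an arithmetic slip in your very first step that invalidates the reduction as written. Multiplying the right-hand side by $12(d+1)$ gives $12\cdot\frac{2d-2}{3}=8(d-1)=8d-8$, not $8(d-2)$, so the bracket should read $6+8(d-1)+3(d-2)(d-3)=3d^2-7d+16$, and the inequality you actually need is
\[
12\cdot 2^d \;\ge\; (d+1)(3d^2-7d+16)+6 \;=\; 3d^3-4d^2+9d+22\,,
\]
whereas your display \eqref{eq:ineq1+1/2aux} asserts only $12\cdot 2^d\ge 3d^3-4d^2+d+14$, which is weaker by $8(d+1)$ and therefore does not imply the Fact. The discrepancy is visible already at the base case: the true margin at $d=4$ is $192-186=6$ (consistent with the gap $3.1-3=0.1$ scaled by $60$), not the comfortable $192\ge 146$ you report.

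The good news is that your strategy survives the correction essentially unchanged: for the correct polynomial the base case is $192\ge 186$, the increment of the right-hand side from $d$ to $d+1$ is $9d^2+d+8$ rather than $9d^2+d$, and the auxiliary bound $12\cdot 2^d\ge 9d^2+d+8$ for $d\ge 4$ still follows by the same one-line induction ($192\ge 156$ at $d=4$, and doubling dominates the increase $18d+10$). So the gap is a computational error rather than a conceptual one, but as submitted the proof establishes a statement that is too weak. For comparison, the paper avoids clearing denominators altogether: it bounds $2^{d+1}$ from below by $\sum_{i=0}^{4}\binom{d+1}{i}$ and then massages the resulting binomial coefficients via Pascal-type identities; your route, once the arithmetic is fixed, is a perfectly acceptable and arguably more mechanical alternative.
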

	
	\begin{proof}
		First note that 
		\[
		2^{d+1}
		\geq 
		1+\binom{d+1}{1}+\binom{d+1}{2}+\binom{d+1}{3}+\binom{d+1}{4}
		\]
		yields 
		\[
		\frac{2^{d+1}-1}{2(d+1)}
		\geq
		\frac{1}{2}+\frac{d}{4}+\frac{1}{6}\binom{d}{2}+\frac{1}{8}\binom{d}{3}\,.
		\]
		Together with
		\[
		\binom{d}{2}
		=
		\binom{d-2}{2}+2\binom{d-2}{1}+1
		\]
		and
		\[
		\binom{d}{3}
		=
		\binom{d-3}{3}+3\binom{d-3}{2}+3\binom{d-3}{1}+1
		\geq 
		3\binom{d-2}{2}+1
		\]
		this leads to
		\begin{align*}
			\frac{2^{d+1}-1}{2(d+1)}
			\geq& 
			\frac{1}{2}+\frac{d}{4}+\frac{1}{6}\binom{d-2}{2}+\frac{2d-3}{6}+\frac{3}{8}\binom{d-2}{2}+\frac{1}{8}\\
			=&
			\frac{1}{2}+\frac{1}{2}\binom{d-2}{2}+\frac{1}{24}\Big(\binom{d-2}{2}+14d-9\Big)\,.
		\end{align*}
		Again using
		\[
		\binom{d-2}{2}
		=
		\binom{d-4}{2}+2\binom{d-4}{1}+1\geq2(d-4)+1
		=
		2d-7
		\]
		leaves us with
		\[
		\frac{2^{d+1}-1}{2(d+1)}
		\geq
		\frac{1}{2}+\frac{(d-2)(d-3)}{4}+\frac{2d-2}{3}\,. \qedhere
		\]
	\end{proof}

\begin{fact}\label{fa:dmqrr's}
	If~$d$,~$m$, and~$s$ are integers with $d\ge m\ge 3$ and $m-2\ge s\ge 0$,
	then the numbers~$r'$,~$r$ defined by
	\begin{align*}
		r'=&\max\{a\in\NN_0\colon a\leq s\text{ and }2^a\leq m-1\}\,,\\
		r=&\max\{a\in\NN_0\colon a\leq s+1\text{ and }2^a\leq m\}
	\end{align*}
	satisfy 
	\[
	\frac{m-1-s}{d+1-r}-\frac12\le\frac{m-2-s}{d+1-r'}
	\]
	and 
	\[
	(s+1)\Big(\frac{s}{d}+\frac{m-2-s}{d+1-r'}\Big)
	+(d-s)\Big(\frac{s+1}{d}+\frac{m-2-s}{d+1-r}-\frac{1}{2}\Big)
	\leq 
	m-2\,.
	\]
\end{fact}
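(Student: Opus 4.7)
The plan is to verify both inequalities via a case analysis on the pair $(r',r)$. Setting $L := \lfloor\log_2(m-1)\rfloor$ (so $L\ge 1$ because $m\ge 3$), the definitions of $r$ and $r'$ force $r-r'\in\{0,1\}$, and a short inspection produces exactly three disjoint situations:
\textbf{(i)} $r=r'=L$, occurring when $s\ge L$ and $m\ne 2^{L+1}$;
\textbf{(ii)} $r'=s$ and $r=s+1$, occurring when $s+1\le L$; and
\textbf{(iii)} $r'=L$ and $r=L+1$, occurring when $s\ge L$ and $m=2^{L+1}$.
In cases (i) and (ii) the definition of $L$ together with $m\le d$ yields $d\ge m\ge 2^L+1$, while in (iii) it yields $d\ge m = 2^{L+1}$.

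For the first inequality I write its left-hand side minus its right-hand side as
\[
    (m-2-s)\Big(\frac{1}{d+1-r}-\frac{1}{d+1-r'}\Big)+\frac{1}{d+1-r}-\frac{1}{2}.
\]
In case (i) the first summand vanishes and the residue reduces to $d+1-L\ge 2$, which is immediate. In cases (ii) and (iii) the telescoping simplifies to $\frac{1}{(d+1-r)(d+2-r)}$, and clearing denominators turns the inequality into $2(m-2-s)\le (d+2-r)(d-r-1)$. Since $r\le s+2$ in both remaining cases, we have $m-2-s\le d-r$, so it suffices to observe $(d-r+2)(d-r-1)\ge 2(d-r)$, i.e.\ $d-r\ge 2$, which is easily checked from the size bounds above.

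For the second inequality, the identity $(s+1)\frac{s}{d}+(d-s)\frac{s+1}{d}=s+1$ brings it into the equivalent form
\[
    (s+1)\frac{m-2-s}{d+1-r'}+(d-s)\frac{m-2-s}{d+1-r}\le (m-s-3)+\frac{d-s}{2}.
\]
In case (i) the two denominators coincide, collapsing the left-hand side to $\frac{(d+1)(m-2-s)}{d+1-L}$; using $m-2-s\le d-s-2$ the inequality reduces to $d+1\ge 3L$, which follows from $d\ge 2^L+1\ge 3L-1$. Case (ii) is parallel: substituting $r'=s$, $r=s+1$ and again using $m-2-s\le d-s-2$ yields $2(s+1)(m-2-s)\le (d-s)(d-s-2)$, i.e.\ $d\ge 3s+2$, which follows from $d\ge 2^{s+1}+1$. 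In case (iii) the two denominators genuinely differ; combining the fractions over the common denominator $(d-L)(d-L+1)$ and writing $u:=d-L$, $v:=d-s$, the target becomes
\[
    2(m-2-s)\bigl[Lu+v\bigr]\le u(u+1)(v-2),
\]
and applying $m-2-s\le v-2$ together with $v\le u$ reduces this to $u\ge 2L+1$, equivalently $d\ge 3L+1$, which holds because $d\ge 2^{L+1}\ge 3L+1$ for every $L\ge 1$.

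The main obstacle is case (iii): here the coarser bound $\frac{1}{d+1-r'}\le\frac{1}{d+1-r}$ that dispatches case (i) is too lossy, so the combined-fraction computation must be carried through exactly. The resulting polynomial inequality is tight at $d=m=4$, $L=1$, $s=1$, matching the tightness of the fact itself (both sides of inequality (B) equal $2$ there) and confirming that no slack can be wasted in the reduction.
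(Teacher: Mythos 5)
Your proof is correct. I checked the case classification (with $L=\lfloor\log_2(m-1)\rfloor$ one indeed gets exactly the three configurations $r=r'=L$; $r'=s,\ r=s+1$; and $r'=L,\ r=L+1$ when $m=2^{L+1}$), the algebra in each case (in particular the identity $u(s+1)+v(u+1)-u(u+1)=Lu+v$ behind your case (iii) normal form $2(m-2-s)(Lu+v)\le u(u+1)(v-2)$), and the arithmetic facts $2^L\ge 3L-2$, $2^{s+1}\ge 3s+1$, $2^{L+1}\ge 3L+1$ that close the cases; all of this goes through, and your case (ii) target $2(s+1)(m-2-s)\le(d-s)(d-s-2)$ is slightly stronger than the exact requirement $2(s+1)(m-2-s)\le(d+1-s)(d-s-2)$, so no harm done.

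Your route is genuinely different from the paper's. The paper never pins down $(r',r)$ beyond the relations $r\le r'+1$, $r'\le s$, $2^r\le m\le d$: for the first inequality it substitutes $x=m-2-s$, $y=d+1-r$, uses $y\ge x+2$ to get $\frac{x+1}{y}\le\frac{x}{y+1}+\frac12$, and then $\frac{m-2-s}{d+2-r}\le\frac{m-2-s}{d+1-r'}$; for the second it reduces (as you do) to $\frac{s+1}{d+1-r'}+\frac{d-s}{d+1-r}\le\frac32$, bounds the left side crudely by $\frac{d+1}{d+1-r}$, and needs only $d\ge 3r-1$ — which fails precisely for $(d,r)=(4,2)$, handled there by a separate numeric check. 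You instead partition by the exact values of $(r',r)$ and carry the computation through exactly in each case. The paper's argument is shorter and more uniform at the price of one ad hoc exceptional case; yours costs more bookkeeping but is more transparent about where the estimate is tight (your case (iii) at $d=m=4$, $s=1$ is exactly the paper's exceptional $(d,r)=(4,2)$), and it handles that tight regime by a general exact computation rather than a special-case evaluation. Only a cosmetic remark: in case (ii) the phrase ``yields \dots, i.e.\ $d\ge 3s+2$'' should be ``which, using $m-2-s\le d-s-2$ once more, follows from $d\ge 3s+2$''; the logic you intend is clear and valid.
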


\begin{proof}
	Setting $x=m-2-s$ and $y=d+1-r$, we have $x\ge 0$ and 
	\[
	y-x=(d-m)+(s+1-r)+2\ge 2\,.
	\]
	This gives 
	\begin{align*}
		\frac{x+1}y
		&=
		\frac x{y+1}+\frac x{y(y+1)} +\frac 1y
		\le
		\frac x{y+1}+\frac x{2y} +\frac 1y \\
		&=
		\frac x{y+1}+\frac{x+2}{2y}
		\le
		\frac x{y+1}+\frac 12\,,
	\end{align*}
	and together with $r\le r'+1$ we obtain 
	\[
	\frac{m-1-s}{d+1-r}-\frac 12
	\le
	\frac{m-2-s}{d+2-r}
	\le	
	\frac{m-2-s}{d+1-r'}\,,
	\]
	which proves the first estimate. 
	
	Because of 
	\[
	\frac{(s+1)s}{d}+\frac{(d-s)(s+1)}{d}
			=s+1\,,
	\]
	the second one reduces to
	\[
	(m-2-s)\Big(\frac{s+1}{d+1-r'}+\frac{d-s}{d+1-r}\Big)
	\leq 
	m-s-3+\frac{d-s}{2}\,.
	\]
	Due to $d\ge m$, the right side is at least $\frac{3}{2}(m-2-s)$ 
	and, thus, it is sufficient to show
	\begin{align}\label{eq:dmqrr'suff}
		\frac{s+1}{d+1-r'}+\frac{d-s}{d+1-r}
		\leq
		\frac{3}{2}\,.
	\end{align}
		
	Suppose first that~$(d,r)=(4,2)$.
	Now~$2^r\leq m\leq d$ yields~$m=4$, while~$r\leq s+1$ and~$2^{r'}\leq m-1$ 
	give~$r'\leq 1\leq s$.
	Hence, we have indeed
	\[
	\frac{s+1}{d+1-r'}+\frac{d-s}{d+1-r}
	\leq
	\frac{s+1}{4}+\frac{4-s}{3}
	=
	\frac{19-s}{12}
	\leq
	\frac{18}{12}=\frac{3}{2}\,.
	\]
	
	Now let~$(d,r)\neq(4,2)$.
	Clearly,~$\frac{s+1}{d+1-r'}+\frac{d-s}{d+1-r}\leq\frac{d+1}{d+1-r}$, 
	and so to prove~\eqref{eq:dmqrr'suff} it is enough to 
	show~$r\leq\frac{d+1}{3}$, i.e.,~$d\geq 3r-1$.
	If~$r\neq 2$, then~$d\geq 2^r\geq 3r-1$.
	Otherwise,~$r=2$ holds, and so~$d\neq 4$ and~$d\geq2^r$ show~$d\geq 3r-1$.
\end{proof}

\begin{cor}\label{fa:ineq2}
	Let~$d$, $m$, $r$, and $s$ be nonnegative integers. If $d\ge m\ge 3$, 
	$m-1\ge s+1\ge r$, and $m\ge 2^r$, then 
	\[
	\frac{s+1}{d}+\frac{m-2-s}{d+1-r}-\frac{1}{2}\leq\frac{m-2}{d+1}\,.
	\]
\end{cor}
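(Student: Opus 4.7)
The plan is to deduce Corollary~\ref{fa:ineq2} from Fact~\ref{fa:dmqrr's} by a convex combination argument. First I would observe that since $s\le m-2$ we have $m-2-s\ge 0$, so the LHS of the target inequality is monotone non-decreasing in $r$. Consequently it suffices to prove the inequality when $r$ equals $r_\star:=\max\{a\in\NN_0\colon a\le s+1\text{ and }2^a\le m\}$, the value appearing in Fact~\ref{fa:dmqrr's}. Because $m\ge 3$ and $s\ge 0$, we have $r_\star\ge 1$, which will be crucial below.

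Next, let $r'=\max\{a\in\NN_0\colon a\le s\text{ and }2^a\le m-1\}$, and write the second inequality of Fact~\ref{fa:dmqrr's} in the compact form $(s+1)A+(d-s)B\le m-2$, where
\[
A=\frac{s}{d}+\frac{m-2-s}{d+1-r'}\qquad\text{and}\qquad B=\frac{s+1}{d}+\frac{m-2-s}{d+1-r_\star}-\frac{1}{2}.
\]
Notice that $B$ is precisely the LHS of the inequality we want. If I can show $A\ge B$, then $(d+1)B\le (s+1)A+(d-s)B\le m-2$ will immediately yield $B\le\frac{m-2}{d+1}$, which is the goal.

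The key step, and the one I expect to require all the content of the problem, is the comparison $A\ge B$. Here I would invoke the \emph{first} inequality of Fact~\ref{fa:dmqrr's}. Splitting $\frac{m-1-s}{d+1-r_\star}=\frac{m-2-s}{d+1-r_\star}+\frac{1}{d+1-r_\star}$ lets one rewrite that first inequality as
\[
\frac{m-2-s}{d+1-r_\star}-\frac{m-2-s}{d+1-r'}\le\frac{1}{2}-\frac{1}{d+1-r_\star}.
\]
Substituting this into the expression for $B-A$ collapses most terms and yields $B-A\le\frac{1}{d}-\frac{1}{d+1-r_\star}$, which is non-positive precisely because $r_\star\ge 1$.

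I do not foresee further obstacles: the corollary is essentially just the observation that the two parts of Fact~\ref{fa:dmqrr's} fit together so that the second one (an averaged bound over $s+1$ and $d-s$ copies) can be transferred to the larger coefficient $d+1$. The only delicate point is the algebraic manipulation that turns the first estimate of Fact~\ref{fa:dmqrr's} into $A\ge B$, and this is the step where the hypothesis $m\ge 2^r$ (which guarantees $r_\star\ge 1$ after the reduction) is used.
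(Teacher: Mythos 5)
Your proof is correct and takes essentially the same route as the paper: reduce by monotonicity to the maximal $r$ from Fact~\ref{fa:dmqrr's}, use its first estimate together with $r\ge 1$ to show that the left-hand side is at most $\frac{s}{d}+\frac{m-2-s}{d+1-r'}$, and then combine this with the second estimate to divide out the coefficient $d+1$. The only difference is cosmetic: you verify the comparison $B\le A$ by rearranging terms, whereas the paper writes it as a two-step chain of inequalities.
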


\begin{proof}
	For reasons of monotonicity, we can assume that $r$ is the number defined 
	in Fact~\ref{fa:dmqrr's}. In particular, we have $r\ge 1$ and 
	the first estimate from that result yields 
		\[
	\frac{s+1}{d}+\frac{m-2-s}{d+1-r}-\frac{1}{2}
	\le
	\frac{s}{d}+\frac{m-1-s}{d+1-r}-\frac{1}{2}
	\le 
	\frac{s}{d}+\frac{m-2-s}{d+1-r'}\,,
	\]
		for~$r'$ defined as in Fact~\ref{fa:dmqrr's}.
	Combined with the second part of Fact~\ref{fa:dmqrr's} this proves 
	our claim. 
\end{proof}

\subsection{Variants of Katona's theorem}

We shall often need a weighted version of the classical Kruskal-Katona 
theorem applicable to generalised complexes (see Lemma~\ref{lem:Katona} below). 
Let us first fix some notation. 
For two finite subsets $A$, $B$ of $\NN$, we write $A\strictif B$ if 
$\max(A \,\triangle\, B)\in B$, or, equivalently, if 
$\sum_{i\in A}2^{i-1}<\sum_{i\in B} 2^{i-1}$. The latter description of~$\strictif$
shows that the linear orders $\bigl(\bigcup_{m\ge 0}\NN^{(m)}, \strictif\bigr)$ 
and $(\NN, <)$ are isomorphic. 

\begin{dfn}\label{def:initfam}
Given~$\ell\in\NN$ and~$M\subseteq \NN_0$, let~$\cR_M(\ell)$ 
denote the set consisting of the first~$\ell$ elements 
of~$\bigl(\bigcup_{m\in M}\NN^{(m)},\strictif\bigr)$.
In the special case $M=\NN_0$ we drop the subscript and if~$m\in\NN_0$, 
we write~$\cR_m(\ell)$ instead of~$\cR_{\{m\}}(\ell)$.
\end{dfn}

Next, for a collection~$\mathcal{F}$ of finite sets and~$\ell\in\NN$, 
the~{\it $\ell$-shadow of~$\mathcal{F}$} is defined by
\[
\partial_{\ell}\mathcal{F}
=
\{F'\subseteq F\colon F\in\mathcal{F}\text{ and }\vert F'\vert=\ell\}\,.
\]
Let us recall that Kruskal~\cite{Kru63} and Katona~\cite{Ka68} proved independently 
that if $m\ge \ell$ are nonnegative integers and $\cF\subseteq \NN_0^{(m)}$ is 
finite, then $|\partial_\ell(\cF)|\ge |\partial_\ell(\cR_m(|\cF|))|$.
The following concept generalises simplicial complexes. 	

\begin{dfn}\label{def:Mcomplex}
Given~$M\subseteq\NN_0$, an~{\it $M$-complex} is a non-empty finite 
collection of sets~$\mathcal{K}$ such that
\begin{enumerate}
	\item $\vert x\vert\in M$ for all~$x\in\mathcal{K}$,
	\item and for all~$x\subseteq y\in\mathcal{K}$ with~$\vert x\vert\in M$, 
	we have~$x\in\mathcal{K}$.
\end{enumerate}
\end{dfn}

For instance,~$\NN_0$-complexes are the same as simplicial complexes.

\begin{remark}\label{rem:initfam}
If~$M\subseteq \NN_0$ 
and~$\cF\subseteq\bigcup_{m\in M}\NN^{(m)}$ is an~$M$-complex,
then, by the aforementioned Kruskal-Katona theorem, so 
is~$\bigcup_{m\in M}\cR_m(\vert\cF\cap\NN^{(m)}\vert)$. 
Indeed, for any~$\ell,m\in\NN$ with~$\ell\leq m$ this result 
yields~$\vert\partial_\ell\bigl(\cR_m(\vert\cF\cap\NN^{(m)}\vert)\bigr)\vert
\le \vert\partial_{\ell}(\cF\cap\NN^{(m)})\vert
\leq
\vert\cF\cap\NN^{(\ell)}\vert$.
As~$\partial_\ell\bigl(\cR_m(\vert\cF\cap\NN^{(m)}\vert)\bigr)$ is an initial 
segment of $(\NN^{(\ell)}, \strictif)$, it is thus a subset 
of~$\cR_\ell(\vert\cF\cap\NN^{(\ell)}\vert)$.
\end{remark}

The following lemma is essentially due to Katona~\cite{K:78}, but we will prove it 
for the sake of completeness.

\begin{lemma}\label{lem:Katona}
If~$M\subseteq \NN_0$ and~$\cF\subseteq\bigcup_{m\in M}\NN^{(m)}$ 
is an~$M$-complex, then for every family of reals~$(w_m)_{m\in M}$ 
satisfying~$w_m\geq w_{m'}$ whenever~$m\leq m'$, we have
\begin{align}\label{eq:weightedKatona}
	\sum_{m\in M}w_m\big\vert \cF\cap\NN^{(m)}\big\vert
	\geq
	\sum_{m\in M}w_m\big\vert \cR_M(\vert \cF\vert)\cap\NN^{(m)}\big\vert\,.
\end{align}
\end{lemma}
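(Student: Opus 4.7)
The plan is to reduce the weighted inequality to a family of cumulative inequalities via Abel summation and then derive each cumulative inequality from the Kruskal--Katona theorem together with a structural analysis of colex-initial segments.

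First I would apply summation by parts. Writing $M = \{m_1 < \dots < m_K\}$ (which we may assume finite since $\cF$ is finite) and setting $w_{m_{K+1}} := 0$, we obtain
\[
\sum_{m \in M} w_m\big|\cF \cap \NN^{(m)}\big| = \sum_{k=1}^{K} (w_{m_k} - w_{m_{k+1}})\, N_k(\cF),
\]
where $N_k(\cG) := |\cG \cap \bigcup_{j \leq k}\NN^{(m_j)}|$, and the analogous identity holds for $\cR_M(|\cF|)$. Since each coefficient $w_{m_k} - w_{m_{k+1}}$ is nonnegative by hypothesis, it suffices to establish the cumulative inequality $N_k(\cF) \geq N_k(\cR_M(|\cF|))$ for every~$k$.

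Fix $k$, write $\ell = |\cF|$, set $M^+ := \{m_{k+1}, \dots, m_K\}$, and introduce $\cF^+ := \cF \cap \bigcup_{m \in M^+}\NN^{(m)}$ and $\cG^+ := \cR_M(\ell) \cap \bigcup_{m \in M^+}\NN^{(m)}$. Since $|\cF| = |\cR_M(\ell)|$, the cumulative inequality is equivalent to $|\cF^+| \leq |\cG^+|$. Observe that $\cF^+$ is itself an $M^+$-complex and that its low-shadow $\Phi(\cF^+) := \{Y \subsetneq Z : Z \in \cF^+,\ |Y| \in M \sm M^+\}$ sits inside $\cF$, which yields the constraint
\[
\ell \geq |\cF^+| + |\Phi(\cF^+)|.
\]

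Supposing for contradiction that $|\cF^+| \geq |\cG^+|+1$, I would apply the classical Kruskal--Katona theorem level by level within $M^+$ and use that shadows of colex-initial segments are themselves colex-initial segments (so they merge coherently across different uniform levels) to obtain the cascade bound
\[
|\Phi(\cF^+)| \geq |\Phi(\cR_{M^+}(|\cF^+|))| \geq |\Phi(\cR_{M^+}(|\cG^+|+1))|.
\]
A direct analysis of the colex order then shows that, if $\ell'$ denotes the smallest index for which the top part of $\cR_M(\ell')$ has cardinality $|\cG^+|+1$, then $\ell' \geq \ell+1$ and the small part of $\cR_M(\ell')$ coincides with $\Phi(\cR_{M^+}(|\cG^+|+1))$; indeed, every small element added in the colex order between $\cR_M(\ell)$ and $\cR_M(\ell')$ is a subset of the freshly added large element. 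Therefore $|\Phi(\cR_{M^+}(|\cG^+|+1))| = \ell' - |\cG^+| - 1 \geq \ell - |\cG^+|$, and combining the estimates gives
\[
\ell \geq |\cF^+| + |\Phi(\cF^+)| \geq (|\cG^+|+1) + (\ell - |\cG^+|) = \ell + 1,
\]
a contradiction.

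The main obstacle lies in the cascade Kruskal--Katona step together with the colex-order identification of $\Phi(\cR_{M^+}(|\cG^+|+1))$ with the small part of $\cR_M(\ell')$; both rest on the compatibility of colex-initial segments under taking shadows across uniform levels, and must be unpacked carefully from the definition of $\strictif$. Once these ingredients are in place, the Abel reduction and the contradiction above complete the argument.
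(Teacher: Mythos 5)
Your outer skeleton is fine: the Abel-summation reduction to the cumulative inequalities $N_k(\cF)\ge N_k(\cR_M(|\cF|))$ is valid (one small slip: with $w_{m_{K+1}}:=0$ the last coefficient is $w_{m_K}$, which need not be nonnegative since the weights are only assumed monotone, not nonnegative; this is harmless because both $K$-th cumulative counts equal $|\cF|$, or one can normalise $w_{m_K}=0$), and it mirrors the rearrangement the paper performs at the end of its proof. The genuine gaps are precisely the two steps you flag as ``the main obstacle'', and they are not routine unpacking. First, the cascade bound $|\Phi(\cF^+)|\ge|\Phi(\cR_{M^+}(|\cF^+|))|$ does not follow from Kruskal--Katona applied level by level together with the fact that shadows of colex-initial segments are colex-initial: those facts only let you replace $\cF^+$ by its level-wise compression $\bigcup_{m\in M^+}\cR_m(|\cF^+\cap\NN^{(m)}|)$ without increasing $|\Phi|$. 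Comparing that compressed family with the \emph{global} multilevel initial segment $\cR_{M^+}(|\cF^+|)$, whose distribution over the levels of $M^+$ is in general different, is a multilevel extremal statement (``colex-initial $M^+$-segments minimise the number of low-level subsets among $M^+$-complexes of a given size'') of essentially the same depth as the lemma being proved, and you assert it rather than prove it. Second, the direction of the identification you actually use, namely that every low-level set $Y\strictif Z$ (with $Z$ the $(|\cG^+|+1)$-st high element) lies in $\Phi(\cR_{M^+}(|\cG^+|+1))$, rests on the nontrivial colex fact that such a $Y$ is contained in some high element preceding $Z$; your parenthetical justification is a restatement of (a special case of) this fact, not a proof, and as phrased it also ignores the small elements already inside $\cR_M(\ell)$. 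The fact is true --- one can show that the colex-least $|Z|$-superset $W=Y\cup S$ of $Y$, where $S$ consists of the smallest integers outside $Y$, satisfies $W\preceq Z$ whenever $Y\strictif Z$, by analysing $\max(Z\,\triangle\,W)$ and counting elements below it --- but this successor-type analysis is exactly the heart of the matter and is missing.

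For comparison, the paper gets the same cumulative information much more directly: after replacing each level of $\cF$ by a colex-initial segment (Remark~\ref{rem:initfam}), it takes the largest level $m^\star$ with $|\cF\cap\NN^{(m^\star)}|>|\cR_M(|\cF|)\cap\NN^{(m^\star)}|$, picks $x\in(\cF\cap\NN^{(m^\star)})\sm\cR_M(|\cF|)$, and uses the set of the $m$ largest elements of $x$ together with its colex successor to conclude $f_m\ge r_m$ for all $m\le m^\star$; combined with $f_m\le r_m$ above $m^\star$ this is exactly your family of cumulative inequalities, and the monotone-weight rearrangement finishes as in your Abel step. So your plan can likely be completed, but as written the two pivotal claims in the middle carry the real content and are not established.
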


\begin{proof}
By Remark~\ref{rem:initfam},~$\cF'=\bigcup_{m\in M}\cR_m(\vert\cF\cap\NN^{(m)}\vert)$ 
is an~$M$-complex and, therefore, we can assume without loss of generality 
that $\cF=\cF'$, i.e., that for every~$m\in M$ the set~$\cF\cap\NN^{(m)}$
is an initial segment of~$(\NN^{(m)}, \strictif)$.
Setting~$f_m=\vert\cF\cap\NN^{(m)}\vert$ 
and~$r_m=\vert\cR_M(\vert\cF\vert)\cap\NN^{(m)}\vert$, we of course have
\begin{equation}\label{eq:1613}
	\sum_{m\in M}f_m=\sum_{m\in M}r_m\,.
\end{equation}
Hence, unless~$f_m=r_m$ holds for all~$m$ (in which case we would be done), 
there has to be some maximal~$m^\star\in M$ such 
that~$f_{m^\star}> r_{m^\star}$.

\begin{claim}
	For all~$m\in M$ with~$m\leq m^\star$, we have~$f_m\geq r_m$.
\end{claim}

\begin{proof}
	Since both~$\cF\cap\NN^{(m)}$ and~$\cR_M(\vert\cF\vert)\cap\NN^{(m)}$ 
	are initial segments of~$(\NN^{(m)},\strictif)$, it suffices to find 
	an element~$y\in\cF\cap\NN^{(m)}$ such that for 
	the successor~$y^+$ of~$y$ in~$(\NN^{(m)},\strictif)$, 
	we have~$y^+\not\in\cR_M(\vert\cF\vert)$.
	
	To this end, let~$x=\{x_1,\dots,x_{m^\star}\}\in (\cF\cap\NN^{(m^\star)})\setminus\cR_M(\vert\cF\vert)$, where~$x_1<\dots <x_{m^\star}$, and consider $y=\{x_{m^\star-m+1},\dots,x_{m^\star}\}$.
	Since~$\cF$ is an $M$-complex, we know that~$y\in\cF\cap\NN^{(m)}$.
	Let~$y^+$ be the successor of~$y$ in~$(\NN^{(m)},\strictif)$.
	Then~$\max (y^+\triangle\, x)=\max (y^+\triangle\, y)\in y^+$ and, 
	thus,~$x\strictif y^+$.
	Since~$\cR_M(\vert\cF\vert)$ is an initial segment 
	of $(\bigcup_{\mu\in M}\NN^{(\mu)},\strictif)$ and~$x\notin \cR_M(\vert\cF\vert)$, 
	this entails~$y^+\notin\cR_M(\vert\cF\vert)$, and so~$y$ is as desired.
\end{proof}

The claim and the maximality of~$m^\star$ mean that we have~$f_m\geq r_m$ 
for all~$m\in M$ with~$m\leq m^\star$, while~$f_m\leq r_m$ for all~$m\in M$ 
with~$m>m^\star$.
So when we rewrite~\eqref{eq:1613} in the form
\[
\sum_{m\in M\cap [0, m^\star]} (f_m-r_m)
=
\sum_{m\in M\cap (m^\star, \infty)} (r_m-f_m)\,,
\]
then both sides involve nonnegative summands only. 
Owing to the monotonicity of~$(w_m)_{m\in M}$ we can conclude
\[
\sum_{m\in M\cap [0, m^\star]} w_m(f_m-r_m)
\geq
\sum_{m\in M\cap (m^\star, \infty)} w_m(r_m-f_m)\,,
\]
which implies the desired estimate~$\sum_{m\in M}w_mf_m\geq\sum_{m\in M}w_mr_m$.
\end{proof}

To explain how this lemma is going to be utilised, we recapitulate an 
elegant argument of Frankl~\cite{F:83}, which provides a lower bound 
on $\alpha(m)$ in terms of the function 
\[
\beta(m)
=
\sum_{F\in\cR(m+1)}\frac{1}{\vert F\vert+1}\,.
\]

\begin{lemma}[Frankl]\label{lem:Fr}
For every $m\in\NN_0$, we have~$\alpha(m)\ge\beta(m)$.
\end{lemma}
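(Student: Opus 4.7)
The plan is to show that every non-trivial simplicial complex~$\cS$ with~$\delta(\cS)>m$ satisfies~$|\cS|>\beta(m)|V(\cS)|$; by the definition of~$\alpha(m)$ (and the existence argument from Section~\ref{sec:upperbounds}), this places~$\beta(m)$ in the set~$M$ introduced there, so~$\alpha(m)\ge\beta(m)$ follows at once. The vertex weights I intend to use are
\[
w(x)=\sum_{x\in F\in\cS}\frac{1}{|F|}\qquad(x\in V(\cS))\,,
\]
whose pleasant global feature is the double-counting identity
\[
\sum_{x\in V(\cS)}w(x)
=\sum_{\vn\ne F\in\cS}\sum_{x\in F}\frac{1}{|F|}
=|\cS|-1\,,
\]
each nonempty edge contributing~$1$. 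It therefore suffices to establish the uniform lower bound~$w(x)\ge\beta(m)$.

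For this, I would re-express~$w(x)$ in terms of the link~$\cS_x=\{F\sm\{x\}:x\in F\in\cS\}$, which is itself a simplicial complex (hence an~$\NN_0$-complex) of size~$d(x)\ge m+1$. Substituting~$F=F'\cup\{x\}$ gives
\[
w(x)=\sum_{F'\in\cS_x}\frac{1}{|F'|+1}\,.
\]
Since the weights~$w_k=1/(k+1)$ are nonincreasing in~$k$, Lemma~\ref{lem:Katona} applied with~$M=\NN_0$ compares this sum to the corresponding sum over the initial segment~$\cR(|\cS_x|)$ of~$\bigl(\bigcup_{k}\NN^{(k)},\strictif\bigr)$. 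Because~$|\cS_x|\ge m+1$, the segment~$\cR(m+1)$ is contained in~$\cR(|\cS_x|)$, and dropping the extra nonnegative terms yields
\[
w(x)\ge\sum_{F'\in\cR(|\cS_x|)}\frac{1}{|F'|+1}
\ge\sum_{F'\in\cR(m+1)}\frac{1}{|F'|+1}=\beta(m)\,.
\]

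Summing the inequality~$w(x)\ge\beta(m)$ over all~$x\in V(\cS)$ then gives~$|\cS|-1\ge\beta(m)|V(\cS)|$, hence~$|\cS|>\beta(m)|V(\cS)|$, as desired. The only real checkpoint is making sure the hypotheses of Lemma~\ref{lem:Katona} line up correctly—that the chosen weights are monotone in the right direction and that~$\cR(m+1)$ sits inside~$\cR(|\cS_x|)$ as an initial segment—but once this is noted the entire proof is a transparent combination of the bijection between edges through~$x$ and the link, the weighted Kruskal--Katona inequality, and double counting.
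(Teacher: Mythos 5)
Your proposal is correct and follows essentially the same route as the paper: the weight $w(x)$ is the paper's $q(x)$, the link $\cS_x$ is the paper's complex $\cA$, the lower bound $w(x)\ge\beta(m)$ comes from the same application of Lemma~\ref{lem:Katona} with $M=\NN_0$ and weights $1/(j+1)$ (your containment $\cR(m+1)\subseteq\cR(|\cS_x|)$ is just the monotonicity $\beta(|\cA|-1)\ge\beta(m)$ the paper uses), and the conclusion is the same double-counting identity $|\cS|=1+\sum_{x\in V}w(x)$.
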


\begin{proof}
Given a simplicial complex $\cS$ with~$\delta(\cS)\ge m+1$, we need to 
show~$\vert\cS\vert>\beta(m)\vert V\vert$, where~$V=V(\cS)$.
To this end, we consider for every vertex~$x\in V$ the 
sum~$q(x)=\sum_{x\in F\in\cS}\frac{1}{\vert F\vert}$.
Lemma~\ref{lem:Katona} applied to~$M=\NN_0$, the simplicial complex 
\[
\cA=\{F\setminus\{x\}\colon x\in F\in\cS\}\,,
\]
and the weights~$w_j=\frac{1}{j+1}$, 
discloses $q(x)=\sum_{F\in\cA}\frac 1{|A|+1}\ge \beta(|\cA|-1)\ge \beta(m)$ 
for every~$x\in V$. 
By double counting this leads to 
\[
|\cS|
=
1+\sum_{x\in V} q(x)
>
\beta(m) |V|\,. \qedhere
\]
\end{proof}

\begin{exmpl}\label{ex:m=1}
For any nonnegative integer~$d$, we have~$\cR(2^d)=\powerset([d])$, whence
\[
\beta(2^d-1)
=
\sum_{i=0}^d\binom{d}{i}\frac{1}{i+1}
=
\frac{2^{d+1}-1}{d+1}\,.
\]
Using Construction~\ref{constr:base} and the previous lemma, 
we obtain~$\alpha(2^d-1)=\frac{2^{d+1}-1}{d+1}$.
\end{exmpl}

\begin{exmpl}\label{ex:m=2}
For every positive integer~$d$, 
we have~$\cR(2^d-1)=\powerset([d])\setminus\{[d]\}$ and
\[
\beta(2^d-2)
=
\beta(2^d-1)-\frac{1}{d+1}
=
\frac{2^{d+1}-2}{d+1}\,.
\]
In view of Construction~\ref{constr:gen} and Lemma~\ref{lem:Fr}, 
this yields~$\alpha(2^d-2)=\frac{2^{d+1}-2}{d+1}$.
\end{exmpl}

\subsection{A special case}\label{subsec:5}
We now turn to an illustration of our method and establish a slight 
generalisation of the well-known lower bound~$\alpha(5)\ge \frac{13}{4}$, 
which corresponds to the case~$(d,m)=(3,3)$ of Theorem~\ref{thm:main}.
For the sake of comparison, we point out that a direct application of 
Lemma~\ref{lem:Fr} only gives the inferior bound $\alpha(5)\ge \frac{19}6$,
so that some additional averaging argument is needed. 
An analogue of the following statement addressing the case~$d\ge 4$ of 
Theorem~\ref{thm:main} will be formulated 
as Theorem~\ref{thm:meta} below.

\begin{lemma}\label{lem:alpha=5gen}
Let~$\cS$ be a simplicial complex with vertex set~$V(\cS)=V$ 
and let~$f\colon V\lra\NN_0$ be a map.
If for all~$x\in V$, we have~$f(x)+d(x)\geq 6$, then
\[
\frac{1}{2}\sum_{x\in V}f(x)+\vert\cS\vert
>
\frac{13}{4}\vert V\vert\,.
\]
\end{lemma}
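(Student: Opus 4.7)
Put $q(x) = \sum_{x\in F\in\cS}\tfrac{1}{|F|}$ and $p(x) = q(x) + \tfrac{f(x)}{2}$ for every $x\in V$. Since $|\cS| = 1 + \sum_{x\in V} q(x)$ by double counting, the desired inequality is equivalent to
\[
\sum_{x\in V}\Big(p(x) - \tfrac{13}{4}\Big) > -1\,.
\]
As in the proof of Lemma~\ref{lem:Fr}, Lemma~\ref{lem:Katona} applied to $\cA_x = \{F\sm\{x\}: x\in F\in\cS\}$ with weights $w_j = \tfrac{1}{j+1}$ yields $q(x)\ge\beta(d(x)-1)$. Using the values $\beta(0),\ldots,\beta(6) = 1, \tfrac{3}{2}, 2, \tfrac{7}{3}, \tfrac{17}{6}, \tfrac{19}{6}, \tfrac{7}{2}$ (from Examples~\ref{ex:m=1},~\ref{ex:m=2} and direct computation) together with $d(x)+f(x)\ge 6$, a routine case analysis on $d(x)$ shows that $p(x) - \tfrac{13}{4}\ge 0$ in every case \emph{except} when $d(x) = 6$ and $f(x) = 0$, in which case only $p(x) - \tfrac{13}{4}\ge -\tfrac{1}{12}$ is guaranteed. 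The borderline surplus $+\tfrac{1}{12}$ occurs precisely for $d(x)\in\{4,5\}$; all remaining cases give surplus $\ge \tfrac{1}{4}$ (or $\tfrac{5}{12}$ if $d(x) = 6$ and $f(x)\ge 1$). Call a vertex $x$ \emph{bad} if $d(x) = 6$ and $f(x) = 0$.

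At a bad vertex $x$, equality must hold in Lemma~\ref{lem:Katona}, so $\cA_x$ has the same size distribution as $\cR(6)$: one empty set, three singletons, and two pairs, with no larger set. Consequently $x$ has exactly three neighbors $u, v, w$ in the $1$-skeleton, and the two size-$3$ edges of $\cS$ through $x$ share a common neighbor (otherwise their three size-$2$ subsets would account for all three pairs). Relabeling so that this shared neighbor is $u$, the edges of $\cS$ incident to $x$ are precisely $\{x\}, \{x,u\}, \{x,v\}, \{x,w\}, \{x,u,v\}, \{x,u,w\}$. In particular $\{u,v\}, \{u,w\}\in\cS$ by downward closure, so $u$ is adjacent to $v$ and $w$; call $u$ the \emph{pivot} of $x$. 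Then $u$ lies in at least the six distinct edges $\{u\}, \{u,x\}, \{u,v\}, \{u,w\}, \{u,x,v\}, \{u,x,w\}$, so $d(u)\ge 6$.

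If the pivot $u$ of some bad $x$ is not bad, then $u$'s surplus of at least $\tfrac{1}{12}$ absorbs $x$'s deficit. If $u$ is also bad, then applying the same analysis at $u$ gives $d(u) = 6$, so $u$ has no edges beyond those listed above and its three neighbors are exactly $\{x, v, w\}$; moreover $x$ appears in three sets of $\cA_u$ while $v, w$ each appear in only two, so $x$ is forced to be the pivot of $u$. Such mutually pivoting $\{x, u\}$ will be called a \emph{bad pair}, and within such a pair both $v$ and $w$ lie in at least the four edges $\{v\}, \{v, x\}, \{v, u\}, \{v, x, u\}$, giving $d(v), d(w)\ge 4$ and hence surplus at least $\tfrac{1}{12}$ (unless $v$ or $w$ is itself bad).

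The plan is then to match every bad vertex to a distinct \emph{witness}---the pivot $u$ when $u$ is not bad, or one of the two common neighbors $v, w$ in the bad-pair case---so that $\sum_{x\in V}(p(x)-\tfrac{13}{4})\ge 0$; the slack $+1$ coming from $\vn\in\cS$ then yields the desired strict inequality. The main obstacle is making this matching injective, since a single non-bad pivot might be the witness for several bad vertices and the common neighbors of a bad pair might themselves be bad, propagating the problem. I would resolve this by a careful analysis of the pivot–neighbor graph on bad vertices---forming conglomerates consisting of either a bad pair with its two common neighbors or a single bad vertex with its non-bad pivot---and verifying that these conglomerates can be arranged so that every witness is claimed in a manner compatible with the $\tfrac{1}{12}$-bookkeeping.
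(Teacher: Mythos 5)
Your local analysis is correct and, up to that point, genuinely different from the paper: you work directly with the given complex, apply Lemma~\ref{lem:Katona} to each link, and observe that a deficit below $\frac{13}{4}$ can occur only at vertices with $d(x)=6$, $f(x)=0$ whose link has the size distribution of $\cR(6)$ (the deficit then being exactly $\frac1{12}$), and your description of the pivot structure and of mutually pivoting bad pairs is sound. (The paper instead takes a minimal counterexample in $|\cS|$ and uses the freedom in $f$ to delete all sets of size at least $4$ and to force $d(x)\le 6$ for every vertex in a $3$-set; after that, $3$-sets can only meet in $0$ or $2$ vertices, the ``quartettes'' are pairwise disjoint, and the averaging is immediate.)

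The genuine gap is that your final step --- the injective assignment of witnesses, i.e.\ the discharging of each $\frac1{12}$ deficit onto a surplus vertex --- is only announced, not carried out: ``I would resolve this by a careful analysis of the pivot--neighbour graph \dots and verifying that these conglomerates can be arranged \dots''. This is exactly where all the difficulty of the lemma sits, and it is the difficulty the paper's $f$-trick is designed to kill. To close it you must actually rule out or absorb the problematic configurations: show that in a bad pair the common neighbours $v,w$ cannot themselves be deficit vertices (this is true --- a deficit link at $v$ would force a $3$-set through $x$ or $u$ beyond their six listed edges --- but it needs to be proved, not parenthetically deferred), show that distinct bad pairs are vertex-disjoint, and quantify what happens when one vertex serves as witness for several deficit vertices (each additional demand contributes three new edges at the witness, so its degree and hence its Katona bound grows fast enough, e.g.\ two demands force degree at least $7$ and surplus at least $\frac14$). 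These verifications appear to go through, so the route is salvageable, but as written the proof stops short of the argument that actually establishes the inequality.
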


\begin{proof}
Assume that the assertion does not hold, and for a fixed vertex set~$V$ 
let~$(\cS,f)$ be a counterexample which minimises~$\vert\cS\vert$.
We commence by observing that~$\cS$ cannot contain sets with more 
than~$3$ elements.
Indeed, if~$\cS$ would contain such sets, let~$F\in\cS$ 
with~$\vert F\vert=m\geq 4$ be maximal.
By deleting~$F$ from~$\cS$, we obtain a simplicial complex~$\cS'$ 
such that~$f(x)+d_{\cS'}(x)\geq 6$ for all~$x\in V\setminus F$ 
and~$d_{\cS'}(x)\geq 2^{m-1}-1>6$ for all $x\in F$.
Hence,~$(\cS',f)$ would be a counterexample 
with~$\vert\cS'\vert<\vert\cS\vert$, a contradiction.

Similarly, one can argue that 
\begin{equation}\label{eq:0038}
	\text{ if a vertex $x$ belongs to a $3$-set of~$\cS$,  	
		then $d(x)\le 6$}. 
\end{equation}
Assume contrariwise that $d(x)\ge 7$ but there is some $xyz\in\cS$.
Let~$\cS'=\cS\setminus\{xyz\}$ and define~$f'\colon V\lra \NN_0$ 
by setting~$f'(y)=f(y)+1$,~$f'(z)=f(z)+1$, and~$f'(v)=f(v)$ for 
all~$v\in V\setminus\{y,z\}$. Clearly, we then still 
have~$f'(v)+d_{\cS'}(v)\geq 6$ for all~$v\in V$ and our minimal choice of $\cS$
entails
\[
\frac{13}{4}|V|
<
\frac{1}{2}\sum_{v\in V}f'(v)+\vert\cS'\vert
=
\frac{1}{2}\sum_{v\in V}f(v)+\vert\cS\vert\,,
\]
which is absurd. 

Having thus proved~\eqref{eq:0038}, we observe that any two $3$-sets in $\cS$
cannot intersect in a single vertex (because this vertex then had degree at
least $7$), and that no vertex can be in three or more $3$-sets of $\cS$.

For~$x\in V$, we 
set~$k(x)=\frac{1}{2}f(x)+\sum_{x\in F\in\cS}\frac{1}{\vert F\vert}$.
Note that this is the `contribution of $x$' to the term we want to find 
a lower bound for, which can be rewritten as $1+\sum_{x\in V}k(x)$.

Define a {\it quartette} to be a~quadruple $Q\in V^{(4)}$ such 
that~$\vert Q^{(3)}\cap\cS\vert\geq 2$.
We contend that every vertex~$x$ which is not contained in any quartette 
satisfies~$k(x)>\frac{13}{4}$.
Our discussion of triples in $\cS$ 
shows that $x$ can belong to at most one triple.
Let us view $k(x)$ 
as a sum of $f(x)$ terms equal to $\frac12$ and $d(x)$ terms of the 
form $\frac 1{|F|}$, so that there are altogether at least $6$ summands.
Among them there are $1$, at least four equal to $\frac12$, and a sixth one 
that is at least $\frac13$.
For this reason, we have indeed 
$k(x)\ge 1+\frac42+\frac13=\frac{10}3>\frac{13}4$.

Next we consider any quartette~$Q$. Again by our discussion of triples in $\cS$,
there are two vertices in $Q$, say $u$ and $v$, which belong to exactly two 
triples, while the two other ones, say $x$ and $y$, are in exactly one triple. 
Consequently, $Q$ is disjoint to all other quartettes. Moreover, the argument 
from the previous paragraph discloses~$k(x), k(y)\ge \frac{10}3$.
Similarly, we obtain $k(u), k(v)\ge 1+\frac 32+\frac 23=\frac{19}6$, and a simple 
addition leads to $\sum_{z\in Q} k(z)\ge 13$.

Let~$\cQ$ be the set of all quartettes, and let~$H$ be the set of vertices 
which are not contained in any quartette. Recalling that the quartettes are 
disjoint, we obtain
\[
\sum_{v\in V}k(v)
=
\sum_{Q\in\cQ}\sum_{v\in Q}k(v)+\sum_{v\in H}k(v)
\geq 
13\vert\cQ\vert+\frac{13}{4}\vert H\vert=\frac{13}{4}\vert V\vert\,,
\]
whence 
\[
\frac{1}{2}\sum_{v\in V}f(v)+\vert\cS\vert
=
1+\sum_{v\in V}k(v)
>
\frac{13}{4}\vert V\vert\,.
\]
But this contradicts the fact that~$(\cS,f)$ is a counterexample.
\end{proof}

\begin{cor}\label{cor:alpha=5}
We have~$\alpha(5)=\frac{13}{4}$.
\end{cor}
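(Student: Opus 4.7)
The plan is to derive Corollary~\ref{cor:alpha=5} by combining the two matching bounds already available from the material developed earlier in the paper.

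First I would establish the lower bound $\alpha(5)\ge\frac{13}{4}$ as an immediate consequence of Lemma~\ref{lem:alpha=5gen} applied with the zero function. Indeed, let $\cS$ be any simplicial complex with $\delta(\cS)>5$, i.e., $d(x)\ge 6$ for every $x\in V(\cS)$. Defining $f\colon V(\cS)\to\NN_0$ by $f(x)=0$, the hypothesis $f(x)+d(x)\ge 6$ is satisfied, so the lemma yields
\[
\vert\cS\vert
=
\frac12\sum_{x\in V(\cS)}f(x)+\vert\cS\vert
>
\frac{13}{4}\vert V(\cS)\vert.
\]
By the definition of $\alpha(\cdot)$ recalled in Section~\ref{sec:intro}, this gives $\alpha(5)\ge\frac{13}{4}$.

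For the matching upper bound I would invoke Construction~\ref{constr:gen} in the case $(d,m)=(3,3)$. This produces a simplicial complex on $d+1=4$ vertices with $2^{d+1}-m+1=14$ edges and minimum degree $2^d-m+1=6>5$. Plugging these parameters into Lemma~\ref{lem:constrgolbfromloc} yields
\[
\alpha(5)\le\frac{\vert\cS\vert-1}{\vert V(\cS)\vert}=\frac{13}{4}.
\]
Combining the two inequalities completes the proof.

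There is no real obstacle here: all the substantive work was carried out in Lemma~\ref{lem:alpha=5gen}, whose strengthening by the auxiliary function $f$ is exactly what makes the case $f\equiv 0$ suffice, and in the explicit construction of Section~\ref{sec:upperbounds}. The corollary is essentially a matter of assembling the two endpoints.
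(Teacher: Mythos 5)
Your argument is correct and matches the paper's own proof: the lower bound comes from Lemma~\ref{lem:alpha=5gen} with the zero function, and the upper bound from Construction~\ref{constr:gen} with $(d,m)=(3,3)$ (via Lemma~\ref{lem:constrgolbfromloc}). Nothing further is needed.
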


\begin{proof}
The upper bound $\alpha(5)\le\frac{13}4$ can be seen by plugging $d=m=3$
into Construction~\ref{constr:gen}. Now suppose conversely that $\cS$ 
is a simplicial complex with~$\delta(\cS)\geq6$.
Lemma~\ref{lem:alpha=5gen}, applied to~$\cS$ and the constant 
function~$f\colon V(\cS)\lra\{0\}$, 
yields the desired bound~$\vert\cS\vert>\frac{13}{4}\vert V\vert$.
\end{proof}

\section{Local lemmata}\label{sec:local}

In order to adapt Frankl's Lemma~\ref{lem:Fr} to the situation considered in 
Theorem~\ref{thm:main}, we need lower bounds 
on $\sum_{F\in\cA}\frac{1}{\vert F\vert+1}$ for simplicial complexes $\cA$
of size $|\cA|\ge 2^d-m+1$, where~$d\ge m\ge 3$ are given. We commence with the case 
that the ground set $V(\cA)$ has size $d$.    

\begin{lemma}\label{lem:local}
Let~$d$, $m$, $s$ be nonnegative integers such that $d\ge \max\{s, 2\}$ 
and~$m\ge s+2$. Further, let~$\cA$ be a simplicial complex 
with~$\vert V(\cA)\vert=d$ 
and~$\vert\cA\vert\geq 2^d-m+1$. If $\cA$ contains exactly~$d-s$ sets 
of size~$d-1$, then
\begin{align}\label{eq:locweight1}
	\sum_{F\in\cA}\frac{1}{\vert F\vert+1}
	\geq
	\frac{2^{d+1}-2}{d+1}-\Big(\frac{s}{d}+\frac{m-2-s}{d+1-r}\Big)
\end{align}
holds 	for some nonnegative integer~$r$ such that~$2^r\leq m-1$ and~$r\leq s$.
\end{lemma}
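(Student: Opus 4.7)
The plan is to pass to the complement and bound the \emph{defect}
$L(\cA):=\sum_{F\in\cA^c}\frac{1}{\vert F\vert+1}$, where $\cA^c:=\powerset(V(\cA))\sm\cA$.
Since $\sum_{F\subseteq V(\cA)}\frac{1}{\vert F\vert+1}=\frac{2^{d+1}-1}{d+1}$,
inequality~\eqref{eq:locweight1} is equivalent to the upper bound
\[
L(\cA)\le\frac{1}{d+1}+\frac{s}{d}+\frac{m-2-s}{d+1-r}.
\]
The starting point is that $\cA^c$ is an upset in $\powerset(V(\cA))$ of size
at most $m-1$, and by hypothesis contains exactly $s$ sets of size $d-1$;
once $s\ge1$, upward closure also forces $V(\cA)\in\cA^c$.

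The case $s=0$ is degenerate: all $d$ sets of size $d-1$ lie in $\cA$, and
by downward closure of $\cA$ so does every proper subset of $V(\cA)$, leaving
$\cA^c\subseteq\{V(\cA)\}$ and $L(\cA)\le\frac{1}{d+1}$. Choosing $r=0$ then
settles the inequality using only $m\ge2$.

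For $s\ge1$, write the missing $(d-1)$-sets as $T_i=V(\cA)\sm\{x_i\}$
and set $X=\{x_1,\dots,x_s\}$. Fix a minimum-cardinality element $U\in\cA^c$
and put $k^\star:=d-\vert U\vert\ge 1$. Two structural consequences occur
simultaneously: all $2^{k^\star}$ supersets of $U$ lie in $\cA^c$, forcing
$2^{k^\star}\le\vert\cA^c\vert\le m-1$; and each $(d-1)$-superset of $U$ must
be one of the $T_i$, so $V(\cA)\sm U\subseteq X$ and thus $k^\star\le s$.
Setting $r:=k^\star$ therefore verifies the two constraints $2^r\le m-1$ and
$r\le s$ required by the statement. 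It then suffices to split $\cA^c$ into
three pieces: $V(\cA)$ contributes $\frac{1}{d+1}$, the $T_i$ contribute
$\frac{s}{d}$ in total, and each of the at most $m-2-s$ remaining members of
$\cA^c$ has size at least $d-k^\star$ by minimality of $U$, hence contributes
at most $\frac{1}{d+1-r}$.

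The only delicate point is the double role of the parameter $k^\star$: one
and the same integer must both lower-bound $\vert\cA^c\vert$ (through the
$2^{k^\star}$ forced supersets of $U$) and upper-bound the weight of any
remaining missing set (through the floor $d-k^\star$ on their sizes).
Recognising that these two roles align exactly with the constraints
$2^r\le m-1$ and $r\le s$, together with the containment $V(\cA)\sm U\subseteq X$
that produces the second constraint, is the crux of the argument.
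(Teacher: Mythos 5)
Your proof is correct and follows essentially the same route as the paper: the paper bounds the weight of the intermediate complex $\cB=\{F\colon |F|\le d-2 \text{ or } F\in\cA\}$ and then of $\cB\setminus\cA$ via a minimal missing set $A_\star$, which is just your complement bookkeeping in different clothing — your $U$, the forced $2^{k^\star}$ missing supersets giving $2^r\le m-1$, the $(d-1)$-supersets giving $r\le s$, and the count of at most $m-2-s$ further missing sets of weight at most $\frac{1}{d+1-r}$ all appear verbatim in the paper's argument. The only (immaterial) differences are that you phrase everything through the upset $\cA^c$ and treat $s=0$ (where the paper instead notes one may assume $V(\cA)\notin\cA$ and allows $\cB=\cA$) as a separate degenerate case.
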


\begin{proof}
We can assume that the vertex set $V=V(\cA)$ is not in $\cA$, because 
removing~$V$ from~$\cA=\powerset(V)$ would keep the assumptions valid 
and decrease the left side of our estimate. 

Now we consider the simplicial 
complex~$\cB=\{F\subseteq V\colon \vert F\vert\leq d-2 \text{ or } F\in\cA\}$,
which arises from $\powerset(V)$ by removing $V$ itself and~$s$ sets of size~$d-1$.
As in Example~\ref{ex:m=2} we see that
\begin{align*}
	\sum_{F\in\cB}\frac{1}{\vert F\vert +1}
	=
	\frac{2^{d+1}-2}{d+1}-\frac{s}{d}\,.
\end{align*}
So due to $\cA\subseteq\cB$, it remains to show 
that~$\sum_{F\in\cB\setminus\cA}\frac{1}{\vert F\vert+1}\leq\frac{m-2-s}{d+1-r}$ 
for some nonnegative integer~$r$ with~$2^r\leq m-1$ and~$r\leq s$.

In the special case~$\cB=\cA$, this is clear, so we can take a 
set~$A_\star\in\cB\setminus\cA$ such that~$\vert A_\star\vert$ is minimal.
Write~$\vert A_\star\vert=d-r$ with~$r\geq 2$ and notice 
that~$\sum_{F\in\cB\setminus\cA}\frac{1}{\vert F\vert+1}\leq\frac{\vert\cB\setminus\cA\vert}{d-r+1}$.
Further, one readily computes
\begin{align}\label{eq:notinS}
	\vert\cB\setminus\cA\vert
	&=
	|\cB|-\vert\cA\vert
	\leq 
	(2^d-s-1)-(2^d-m+1)
	= 
	m-s-2\,.
\end{align}

Thus, we can finish the proof by showing~$r\leq s$ and~$2^r\leq m-1$.
To this aim, we analyse the sets in~$\powerset(V)$ which must be missing 
from~$\cA$ because they are supersets of~$A_\star$.
First, each of the~$r$ elements $x\in V\setminus A_\star$ gives 
a~$(d-1)$-set~$V\setminus\{x\}$ containing~$A_\star$; together with the 
hypothesis $|V^{(d-1)}\sm\cA|=s$ this proves $r\le s$.
				The second part is shown by 
\[
2^r
=
\big|\{F\colon A_\star\subseteq F \subseteq V\}\big|
\le 
|\powerset(V)\sm\cA|
\le
m-1\,. \qedhere
\]
\end{proof}

The following lemma will later be used to argue that if a simplicial complex uses a ground set larger 
than necessary, then its weight is sufficiently large.

\begin{lemma}\label{lem:unified}
Given integers $d\ge m\ge 3$, let $\cA$ be a simplicial complex 
with~$|V(\cA)|\geq d+1$ and~$|\cA|\geq 2^d-m+1$.
\begin{enumerate}[label=\alabel]
	\item\label{it:lua} We have $\sum_{F\in\cA}\frac{1}{|F|+1}\ge 
	\frac{2^{d+1}-m}{d+1}$.
	\item\label{it:lub} If $\cA$ contains exactly~$d-s-1$ sets of size~$d-1$ 
	for some~$s\in [0, m-2]$, then there is an integer~$r$ 
	with~$0\leq r\leq s+1$,~$2^r\leq m$, and
	\[
	\sum_{F\in\cA}\frac{1}{|F|+1}
	\geq
	\frac{2^{d+1}-2}{d+1}
	-
	\Big(\frac{s+1}{d}+\frac{m-2-s}{d+1-r}-\frac{1}{2}\Big)\,.
	\]
\end{enumerate}
\end{lemma}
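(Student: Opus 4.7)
My plan is to prove (b) first; part (a) will then follow by combining (b) with Corollary~\ref{fa:ineq2}, after handling separately the two extremal regimes $a_{d-1}(\cA)\ge d$ and $a_{d-1}(\cA)\le d-m$ that lie outside the hypothesis of~(b).

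The opening observation for (b) is that $a_{d-1}(\cA)=d-s-1\le d-1$ forces every face of $\cA$ to have size at most $d-1$: a face of size exactly~$d$ would inject $d$ distinct $(d-1)$-subfaces into~$\cA$, contradicting $a_{d-1}(\cA)\le d-1$, and larger faces are excluded inductively by the same argument on their $d$-subfaces. This is what enables a reduction to Lemma~\ref{lem:local}. The central move is to strip an isolated singleton: if some $v^\star\in V(\cA)$ belongs to no face of~$\cA$ of size~$\ge 2$, then $\cA'=\cA\sm\{\{v^\star\}\}$ is a simplicial complex with $|V(\cA')|=|V(\cA)|-1$, $|\cA'|=|\cA|-1\ge 2^d-m$, and the same number of $(d-1)$-faces as $\cA$. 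In the base case $|V(\cA)|=d+1$ one has $|V(\cA')|=d$, and Lemma~\ref{lem:local} with parameters $(d,m+1,s+1)$---whose hypotheses follow from $s\le m-2$ and $d\ge m\ge 3$---supplies an integer $r\le s+1$ with $2^r\le m$ and
\[
\sigma(\cA')\ge\frac{2^{d+1}-2}{d+1}-\frac{s+1}{d}-\frac{m-2-s}{d+1-r}\,.
\]
Since $\sigma(\cA)=\sigma(\cA')+\tfrac12$, this exactly yields the conclusion of (b). For $|V(\cA)|>d+1$ I iterate the stripping via induction on $|V(\cA)|$, invoking (b) itself on the smaller complex; the only bookkeeping point is that the recursive call supplies $2^{r'}\le m+1$, and when $m+1$ happens to be a power of two I take $r=r'-1$ and verify the required inequality $\tfrac{m-1-s}{d+1-r'}-\tfrac{m-2-s}{d+1-r}\le\tfrac12$, which reduces to $2(m+d-s-r')\le(d+1-r')(d+2-r')$ and is tight in the sub-case $d=m=3$.

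The step I expect to cost the most work is the case in which $\cA$ has no isolated singleton at all, so every vertex of $V(\cA)$ lies in a face of size~$\ge 2$. My plan there is to remove a maximal face of size~$\ge 2$---either creating an isolated singleton and re-entering the previous reduction with slightly shifted parameters, or else exploiting the fact that $a_2(\cA)\ge\lceil|V(\cA)|/2\rceil$ together with a crude Kruskal-Katona bound on the middle levels to overshoot the target---and to dispose of the very small instances (particularly $d=m=3$) by hand, since that is the regime where Corollary~\ref{fa:ineq2} becomes tight.

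For part~(a), the middle regime $a_{d-1}(\cA)\in[d-m+1,d-1]$ is in the scope of (b), and Corollary~\ref{fa:ineq2} converts its conclusion into the required $\sigma(\cA)\ge\frac{2^{d+1}-m}{d+1}$. The regime $a_{d-1}(\cA)\ge d$ splits into two sub-cases: either $\cA$ contains a face of size~$\ge d$, so $\cA\supseteq\powerset(F)$ gives $\sigma(\cA)\ge\frac{2^{d+1}-1}{d+1}+\tfrac12$ via an extra singleton supplied by $|V(\cA)|\ge d+1$, or $\cA$ has at least $d$ faces of size~$d-1$ whose Kruskal-Katona shadow already drives $\sigma$ past the target. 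Finally, if $a_{d-1}(\cA)\le d-m$, the mass of $\cA$ concentrates at sizes~$\le d-2$, and Lemma~\ref{lem:Katona} applied with $M=\{0,1,\ldots,d-2\}$, together with the extra singletons from $|V(\cA)|\ge d+1$, closes the gap.
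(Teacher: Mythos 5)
Your endgame coincides with the paper's: once one reaches a complex on exactly $d+1$ vertices in which some vertex lies only in its own singleton, stripping that singleton and applying Lemma~\ref{lem:local} with parameters $(m+1,s+1)$ gives part~\ref{it:lub} verbatim, and Corollary~\ref{fa:ineq2} then converts~\ref{it:lub} into~\ref{it:lua}; your separate treatment of the outer regimes (at least $d$ sets of size $d-1$, or at most $d-m$ of them) is also in line with what the paper does. The genuine gap is exactly the case you defer: a complex with no isolated vertex, possibly on many vertices and with middle levels not confined to $\powerset([d])$. Such configurations really occur (already for $d=m=3$, $s=0$: vertex set $\{1,2,3,4\}$ with the two disjoint edges $12$ and $34$), and neither of your two suggested devices is a proof. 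Removing an inclusion-maximal face of size at least $2$ does not in general create an isolated vertex, and the accompanying parameter shift changes the target bound by roughly $\frac1{d+1-r}$, which must be weighed against the lost contribution $\frac{1}{|F|+1}$ --- not automatic, and $r$ may change between the two applications; the alternative ``crude Kruskal--Katona overshoot'' is unsubstantiated and delicate precisely because~\ref{it:lub} is close to tight in this region. The paper resolves this difficulty with machinery your sketch lacks: it replaces the union of the levels $2,\dots,d-2$ by the initial segment $\cT=\cR_M(|\cA\cap\gM|)$, verifies that the modified system $\cB$ is still a simplicial complex (this needs the auxiliary initial segment $\cQ$), invokes Lemma~\ref{lem:Katona} to see that the weight does not increase, and then runs an induction on $|\cA|+|V(\cA)|$ with a case distinction comparing $\cT$ to $\powerset([d])\cap\gM$, using delete-an-element and delete-$\{D\}$-and-add-a-set moves to funnel everything into the situation where $\{d+1\}$ is isolated and Lemma~\ref{lem:local} applies. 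It is this compression-plus-induction reduction, not the final application of Lemma~\ref{lem:local}, that constitutes the substance of the lemma, and your proposal does not supply it.

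A smaller but real defect: your recursion for $|V(\cA)|>d+1$ invokes part~\ref{it:lub} with $m+1$ in place of $m$, but the hypothesis of the lemma requires $d\ge m$, so when $d=m$ (which is allowed, and relevant since $s\le m-2\le d-2$ permits it) the recursive call falls outside the statement being proved. This could presumably be repaired --- for instance by proving the statement for a wider range of the parameter, or by stripping all isolated vertices at once and applying Lemma~\ref{lem:local} with suitably shifted parameters, with the attendant bookkeeping --- but as written the induction is not self-contained even in the case where isolated vertices exist.
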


\begin{proof}
Arguing for fixed $d$, $m$ by induction on $|\cA|+|V(\cA)|$, we suppose that both 
statements hold for all simplicial complexes $\cA'$ satisfying $|V(\cA')|\ge d+1$, 
$|\cA'|\geq 2^d-m+1$,
and~$|\cA'|+|V(\cA')|<|\cA|+|V(\cA)|$. If some set $F\in \cA$ has size at 
least $d$, then $|\cA|\ge 2^d$, so that we can simply delete a maximal such 
set and apply induction. Therefore, we can assume~${|F|<d}$ for all~$F\in \cA$.
Remark~\ref{rem:initfam} allows us to suppose further, that
\begin{equation}\label{eq:1841}
	\cA\cap\NN^{(\mu)}=\cR_\mu(|\cA\cap\NN^{(\mu)}|)
\end{equation}
for all~$\mu\in\NN$. 

Define the integer $s\le d-1$ by $|\cA\cap\NN^{(d-1)}|=d-s-1$. Let us first look
at the special case $s<0$, where~\ref{it:lub} is not applicable and we only have 
to deal with~\ref{it:lua}. The case~$\mu=d-1$ of~\eqref{eq:1841} 
yields $([d]\sm\{i\})\in\cA$ for all~$i\in [d]$, 
whence~$\powerset([d])\sm\{[d]\}\subseteq \cA$. As we saw in Example~\ref{ex:m=2}, 
this entails~$\sum_{F\in\cA}\frac{1}{|F|+1}\ge \frac{2^{d+1}-2}{d+1}$, which is 
stronger than necessary. 

It remains to study the case $s\ge 0$, where~\eqref{eq:1841} reveals  
\[
\cA\cap\NN^{(d-1)}=\{[d]\setminus \{i\}\colon s+2\leq i\leq d\}\,.
\]
Moreover, when applied to~$\mu=1$~\eqref{eq:1841} tells 
us $V(\cA)=[D]$ for some $D>d$. Now we set
\[
M=\{2,\dots,d-2\}
\quad\text{ as well as }\quad
\gM=\bigcup_{\mu\in M}\NN^{(\mu)}\,,
\]
and try to `simplify' $\cA\cap\gM$. 
To this end, we set $\cT=\cR_M(|\cA\cap\gM|)$ and consider the set system
\begin{align*}
	\cB=\{F\in\cA\colon |F|\in\{0, 1, d-1\}\}\cup\cT\,.
\end{align*}

We contend that~$\cB$ is a simplicial complex. Since $\powerset([D])\cap\gM$
is an initial segment of~$(\gM, \strictif)$ which contains $\cA\cap \gM$, it 
contains $\cT$ as well, and therefore we have $\cB\subseteq\powerset([D])$.
Due to the choice of~$\cT$, it is only left to argue that for all~$F'\subseteq F\in\cA\cap\NN^{(d-1)}$ with~$\vert F'\vert\geq 2$, it follows that~$F'\in\cT$.
For this, observe that also~$\cQ=\{F\in \powerset([d])\cap\gM\colon [s+2,d]\not\subseteq F\}$
is an initial segment of~$(\gM, \strictif)$. 
Thus, $\cQ\subseteq \cA$ implies $\cQ\subseteq \cT$, which completes the proof that~$\cB$ is a simplicial 
complex.
As Lemma~\ref{lem:Katona} 
tells us~$\sum_{F\in \cA}\frac 1{|F|+1}\ge \sum_{F\in \cB}\frac 1{|F|+1}$,
we can henceforth assume that~$\cA=\cB$.
The main gain of this transformation is 
that now the set $\cT=\cA\cap\gM$ satisfies the trichotomy 
\[
\cT\subsetneq \powerset([d])\cap\gM
\quad \text{ or } \quad
\cT= \powerset([d])\cap\gM
\quad \text{ or } \quad
\cT\supsetneq \powerset([d])\cap\gM\,,
\]
which due to $|\powerset([d])\cap\gM|=2^d-2d-2$ suggests the following 
case distinction. 

\smallskip

{\hskip2em \it First case: $|\cT|\ge 2^d-2d-2$ and $|\cT|+D\ge 2^d-d$.}

\smallskip

Assume first that $|\cA| > 2^d-m+1$. If $|\cT| > 2^d-2d-2$, then 
$\cT\sm (\powerset([d])\cap\gM)$ is non-empty and we can simply 
delete an inclusion-maximal element of this set and apply induction. 
On the other hand, if $|\cT|=2^d-2d-2$, or in other 
words $\cT=\powerset([d])\cap\gM$, then $D\ge (2^d-d)-|\cT|=d+2$, so that 
we can delete $\{D\}$ and apply induction. 

Thus it remains to consider the case $|\cA|=2^d-m+1$. As
\begin{align*}
	d-s-1
	&=
	|\cA\cap\NN^{(d-1)}|
	=
	|\cA|-(|\cT|+D+1) \\
	&\le
	(2^d-m+1)-(2^d-d+1)
	=d-m
\end{align*}
yields $s\ge m-1$, part~\ref{it:lub} is mute and we only need to 
establish part~\ref{it:lua}. 

To this end, we observe that $\cE=\{F\subseteq [d]\colon |F|\le d-2\}$
is a subset of $\cA$ and the calculation in Example~\ref{ex:m=2} shows
\[
\sum_{F\in \cE}\frac 1{|F|+1}
=
\frac{2^{d+1}-2}{d+1}-1\,.
\]
As the set system $\cA\sm\cE$ contains $\{d+1\}$ and $d-m+1$ further sets, 
each of which has size at most $d$, we infer 
\[
\sum_{F\in \cA}\frac 1{|F|+1}
\ge
\frac{2^{d+1}-2}{d+1}-1+\frac12+\frac{d-m+1}{d+1}
=
\frac{2^{d+1}-m}{d+1}+\frac12-\frac 2{d+1}\,.
\]
Because of $d\ge 3$, this implies the desired estimate.

\smallskip

{\hskip2em \it Second case: $|\cT|\le 2^d-2d-3$ or $|\cT|+D\le 2^d-d-1$.}

\smallskip

Suppose first that $D>d+1$. Now $|\cT|\le 2^d-2d-3$ is surely true, which in turn implies~$\cT\subsetneq \powerset([d])\cap\gM$. 
Thus, we can remove 
the set $\{D\}$ from $\cA$ and add an inclusion-minimal set from~$(\powerset([d])\cap\gM)\sm\cT$ instead, to get a new simplicial complex~$\cC$ with~$V(\cC)=[D-1]$.
Since $|\cC|=|\cA|$ and $|V(\cC)|<|V(\cA)|$,
the induction hypothesis applies to $\cC$, and due to $\sum_{F\in\cA}\frac 1{|F|+1}\ge \sum_{F\in\cC}\frac 1{|F|+1}$, we are done. 

It remains to study the case $D=d+1$.
Now~$\vert\cT\vert\leq 2^d-2d-2$, whence~$\cT\subseteq\powerset([d])\cap\gM$ and $|\cT|+D\le 2^d-d-1$. Due to 
\begin{align*}
	d-s-1
	&=
	|\cA\cap\NN^{(d-1)}|
	=
	|\cA|-(|\cT|+D+1) \\
	&\ge
	(2^d-m+1)-(2^d-d)
	=d-m+1
\end{align*}
we have $s\le m-2$. Thus part~\ref{it:lub} follows from Lemma~\ref{lem:local}
applied to the numbers~$m+1$,~$s+1$, and the simplicial 
complex $\cA\sm\{\{d+1\}\}$ here in place of $m$, $s$, and $\cA$ there.
Moreover, Corollary~\ref{fa:ineq2} yields part~\ref{it:lua}.
\end{proof}

On some occasions the following very weak estimate will suffice.

\begin{lemma}\label{lem:weightboundsimple}
Let~$d\geq 4$ be an integer and let~$\cA$ be a simplicial complex. 
If~$\vert\cA\vert\geq 2^d-2d+2$, then
\[
\sum_{F\in\cA}\frac{1}{\vert F\vert+1}
\geq
\frac{2^{d+1}-1}{d+1}-\frac{2d-2}{3}\,.
\]
\end{lemma}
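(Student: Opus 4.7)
The plan is to combine the weighted Kruskal--Katona inequality with a simple counting observation. I would first apply Lemma~\ref{lem:Katona} with $M=\NN_0$ and the monotone decreasing weights $w_m=\frac{1}{m+1}$, which allows us to replace $\cA$ by the initial segment $\cR(|\cA|)$ without increasing the left-hand side of the desired inequality. Since $\sum_{F\in\cA}\frac{1}{|F|+1}$ is itself monotone in $|\cA|$ (adding sets only contributes positive terms), it then suffices to treat the extremal case $\cA=\cR(2^d-2d+2)$.

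Because $2^d-2d+2\le 2^d$, we have $\cA\subseteq\cR(2^d)=\powerset([d])$, so I would split
\[
\sum_{F\in\cA}\frac{1}{|F|+1}
=
\frac{2^{d+1}-1}{d+1}-\sum_{F\in\powerset([d])\setminus\cA}\frac{1}{|F|+1}
\]
using Example~\ref{ex:m=1}. The subtracted sum runs over the top $2d-2$ elements of $(\powerset([d]),\strictif)$, so the problem reduces to showing that each of these sets has size at least $2$; in that case each missing set contributes at most $\frac{1}{3}$ and the total subtracted term is bounded by $\frac{2d-2}{3}$, exactly as required.

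For the final combinatorial step, I would use that $A\subseteq[d]$ has $\strictif$-value $\sum_{i\in A}2^{i-1}$. The subsets of $[d]$ of size at most $1$ take the values $0,1,2,4,\dots,2^{d-1}$, so $\{d\}$ is the largest among them. All $2^{d-1}-1$ subsets of $[d]$ that contain $d$ together with at least one other element have size $\ge 2$ and $\strictif$-value strictly greater than $2^{d-1}$, and hence form an upper segment of $(\powerset([d]),\strictif)$. The hypothesis $d\ge 4$ is precisely what ensures $2^{d-1}-1\ge 2d-2$ (with equality never attained but the margin only $1$ at $d=4$), so the top $2d-2$ sets all lie inside this upper segment and thus have size at least~$2$. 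I expect this final numerical inequality, and in particular its tightness at the base case $d=4$, to be the only delicate point of the argument.
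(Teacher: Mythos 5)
Your proposal is correct and follows essentially the same route as the paper: reduce via Lemma~\ref{lem:Katona} to the initial segment $\cR(2^d-2d+2)\subseteq\powerset([d])$ and observe that the at most $2d-2$ missing sets all have size at least $2$, which rests on the same numerical fact $2^{d-1}\ge 2d-1$ for $d\ge 4$ that the paper encodes as $2^d-2d+2>2^{d-1}$. No gaps.
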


\begin{proof}
The case~$M=\NN_0$ of Lemma~\ref{lem:Katona} allows us to assume that 
$\cA=\cR(2^d-2d+2)$. As $d\ge 4$ yields $2^d-2d+2\in (2^{d-1}, 2^d)$,
we then have $V(\cA)=[d]$. It remains to 
recall $\sum_{F\subseteq [d]}\frac{1}{\vert F\vert+1}=\frac{2^{d+1}-1}{d+1}$ 
and to observe that each of the at most $2d-2$ sets 
in $\powerset([d])\sm\cA$ has size at least $2$.
\end{proof}

\section{Mountains}\label{sec:mountains}

We call $\NN_{\geq 2}$-complexes (cf.\ Definition~\ref{def:Mcomplex}) {\it mountains}. 
The following result on mountains generalises Theorem~\ref{thm:main}.

\begin{theorem}\label{thm:meta}
Given integers~$d$ and~$m$ with~$d\geq4$ and~$d\geq m\geq3$, let
\begin{itemize}
\item $V$ be a set of vertices,
\item $\fB_1,\dots,\fB_t\subseteq\powerset(V)$ be mountains,
\item and let~$f\colon V\lra\NN_0$ be a map.
\end{itemize}
If for all~$x\in V$, we have
\begin{align*}
f(x)+\sum_{\tau=1}^td_{\fB_{\tau}}(x)\geq 2^d-m\,,
\end{align*}
and for all~$F\in\bigcup_{\tau\in [2, t]}\fB_{\tau}$, we have~$\vert F\vert<d$, 
then
\begin{align*}
\sum_{\tau=1}^t\vert\fB_{\tau}\vert+\sum_{x\in V}\Big(\frac{f(x)}{2}+1\Big)
\geq
\frac{2^{d+1}-m}{d+1}\vert V\vert\,.
\end{align*}
\end{theorem}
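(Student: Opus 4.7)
The plan is to adapt the strategy of Lemma~\ref{lem:alpha=5gen} to this general setting, exploiting the local estimates of Section~\ref{sec:local}. We proceed by induction on $|\fB_1|$, with ties broken by maximising $\sum_{x\in V}f(x)$. To each vertex $x\in V$ we associate the local weight
\[
k(x)=\frac{f(x)}{2}+1+\sum_{\tau=1}^{t}\sum_{\substack{F\in\fB_\tau\\ x\in F}}\frac{1}{|F|}\,,
\]
so that double counting gives $\sum_{x\in V}k(x)=\sum_{\tau}|\fB_\tau|+\sum_{x\in V}\bigl(\tfrac{f(x)}{2}+1\bigr)$. Hence the desired inequality is equivalent to the average of $k$ over $V$ being at least $\frac{2^{d+1}-m}{d+1}$.

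The first part of the proof consists of structural reductions, analogous to those in Lemma~\ref{lem:alpha=5gen}. First, $\fB_1$ contains no set of size larger than $d$: if $F\in\fB_1$ had $|F|>d$, then removing $F$ would keep each $x\in F$ with link size at least $2^{|F|-1}-2\ge 2^d-m$, so the minimum degree condition would persist, while $|\fB_1|$ strictly decreases --- contradicting the inductive choice. Similarly, paralleling~\eqref{eq:0038}, whenever a vertex $x$ lies in a set $F\in\fB_1$ and its surplus degree $f(x)+\sum_\tau d_{\fB_\tau}(x)-(2^d-m)$ exceeds a suitable threshold, we may delete $F$ and compensate by incrementing $f$ on the remaining vertices of $F$ (or, for some $F$, by moving sets into a new auxiliary mountain of smaller dimension). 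Choosing these moves so that the LHS of the target inequality does not increase constrains a minimal counterexample to a rigid local structure.

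The second part introduces \emph{conglomerates}, the analogue here of the quartettes from Lemma~\ref{lem:alpha=5gen}. A conglomerate is a subset $C\subseteq V$ of size at most $d+1$ endowed with a distinguished apex $x_C\in C$ whose $\fB_1$-link nearly equals the full power set $\powerset(C\setminus\{x_C\})$. The crucial flexibility, improving over the disjoint ``clusters'' of~\cite{PS:21}, is that two conglomerates may intersect in at most one vertex. For a vertex $x$ not in any conglomerate, Lemma~\ref{lem:Katona} applied to the simplicial complex generated by the links $\{F\setminus\{x\}\colon x\in F\in\bigcup_\tau\fB_\tau\}$, combined with Lemma~\ref{lem:unified}\ref{it:lua}, Lemma~\ref{lem:weightboundsimple}, and Fact~\ref{fa:ineq1+1/2}, yields $k(x)\ge\frac{2^{d+1}-m}{d+1}$ directly; Fact~\ref{fa:ineq1} is used to argue that failure of this pointwise bound forces $x$ to belong to some conglomerate. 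For a conglomerate $C$, Lemma~\ref{lem:local} applied at the apex together with Corollary~\ref{fa:ineq2}, whose key inequality~\eqref{eq:dmqrr'suff} is tailored exactly for this averaging, yields $\sum_{x\in C}k(x)\ge\frac{2^{d+1}-m}{d+1}|C|$: the deficit at the non-apex vertices is absorbed by the surplus at the apex. Summation over all conglomerates and all remaining vertices then gives the theorem.

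The hard step will be the second: identifying the right combinatorial notion of a conglomerate so that the many cases from Facts~\ref{fa:ineq1} and~\ref{fa:dmqrr's} and from Lemma~\ref{lem:unified} combine into a clean per-conglomerate bound, and arranging the peeling in the induction so that removing one conglomerate leaves a configuration still satisfying the hypotheses of Theorem~\ref{thm:meta}. The auxiliary function $f$ and the auxiliary mountains $\fB_2,\dots,\fB_t$ exist precisely to accommodate this peeling: lost degree is passed to $f$ or to a lower-dimensional auxiliary mountain, preserving the minimum-degree constraint throughout the induction.
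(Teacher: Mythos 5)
Your overall architecture coincides with the paper's: the weights $k(x)$, the reduction to showing that $k$ averages at least $\frac{2^{d+1}-m}{d+1}$, the deletion arguments constraining a minimal counterexample (no sets of size $>d$ in $\fB_1$, maximal sets forced to contain low-degree vertices), conglomerates of size $d+1$ built around vertices whose whole degree is concentrated in $\fB_1$, and the local Lemmata~\ref{lem:local} and~\ref{lem:unified} feeding an averaging argument. However, the two places where the real work happens are exactly the ones your plan does not supply. First, the statement that two conglomerates meet in at most one vertex is not something you can build into the definition; with your (and the paper's) notion of conglomerate it is a claim that has to be proved, and its proof is the heart of the matter: one takes a suitable set $A$ in the intersection, splits $\fB_1$ into $\fB_1'=\fB_1\setminus\{L\colon A\subseteq L,\ |L\setminus A|\ge 2\}$ and a \emph{new} mountain $\fB_{t+1}$ of links of $A$, verifies by a delicate two-case degree computation that every $x\in A$ still satisfies $d_{\fB_1'}(x)\ge 2^d-m$, and then contradicts the minimality of $|\fB_1|$ (with a separate argument for $d=4$). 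This is precisely why the theorem is phrased for several mountains; your proposal mentions that the auxiliary $\fB_\tau$ ``exist to accommodate peeling'' but never engages with this step, and merely asserting the intersection property leaves the central claim (Claim~\ref{cl:conglointersection} in the paper) unproved.

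Second, even granting that intersections have size at most one, your concluding sentence ``summation over all conglomerates and all remaining vertices then gives the theorem'' has a gap: conglomerates may share vertices, so adding per-conglomerate bounds double counts them. Moreover, the clean bound $\sum_{x\in C}k(x)\ge\frac{2^{d+1}-m}{d+1}|C|$ is only directly available when $C$ is disjoint from all other conglomerates (Claim~\ref{clm:1957}); to handle overlaps the paper needs an extra surplus estimate $k(x)-\frac{2^{d+1}-m}{d+1}\ge\frac{\lambda(x)(d-2)(d-3)}{4}$ for vertices in $\lambda(x)\ge 2$ conglomerates (Claim~\ref{cl:ksurplusmulticonglos}, via Lemma~\ref{lem:weightboundsimple} and Fact~\ref{fa:ineq1+1/2}) and then the $\lambda$-normalised inequality $\sum_{x\in K}k(x)/\lambda(x)\ge\frac{2^{d+1}-m}{d+1}\sum_{x\in K}1/\lambda(x)$ (Claim~\ref{cl:normalisedconglomerateweight}), which is what actually sums correctly over all conglomerates. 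Your alternative suggestion of ``peeling'' a conglomerate and reapplying the theorem inductively is not how the minimality of $|\fB_1|$ is used and would anyway require the same per-conglomerate accounting to keep the degree hypothesis for outside vertices. A smaller inaccuracy: within a conglomerate the deficit sits at the low-degree (apex-like, $\mu(x)=d$) vertices and the surplus at the others, and the balancing uses both estimates of Fact~\ref{fa:dmqrr's} (together with Claim~\ref{cl:largemtnedgeconglo} to exclude large $\fB_1$-sets leaving the conglomerate), not Corollary~\ref{fa:ineq2} alone. As it stands, then, the proposal is a correct outline of the paper's strategy but leaves its hardest two steps unestablished.
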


Let us first derive Theorem~\ref{thm:main} from this result.

\begin{proof}[Proof of Theorem~\ref{thm:main} assuming Theorem~\ref{thm:meta}]
Let integers~$d\geq m\geq 1$ be given.
Recall that Construction~\ref{constr:gen} provides the upper bound, so it 
remains to show~$\alpha(2^d-m)\geq\frac{2^{d+1}-m}{d+1}$.
Further note that due to Examples~\ref{ex:m=1} and~\ref{ex:m=2}, 
we may assume~$d\geq m\geq 3$, and because of Corollary~\ref{cor:alpha=5}, 
we can even restrict ourselves to~$d\geq4$.

Now let~$\cS$ be a simplicial complex with~$\delta(\cS)\geq 2^d-m+1$.
We are to prove the lower 
bound~$\vert\cS\vert>\frac{2^{d+1}-m}{d+1}\vert V(\cS)\vert$.
Theorem~\ref{thm:meta} applied to
\begin{itemize}
\item $V=V(\cS)$,
\item $t=1$ and the mountain~$\fB_1=\{F\in\cS:\vert F\vert\geq 2\}$,
\item and the zero function~$f\colon V\lra\{0\}$
\end{itemize}
yields 
\[
\vert\fB_1\vert+\vert V\vert
\geq
\frac{2^{d+1}-m}{d+1}\vert V\vert\,.
\]
Since~$\vert\cS\vert=\vert\fB_1\vert+\vert V\vert+1$, we are done.
\end{proof}

The rest of this section is dedicated to the proof of Theorem~\ref{thm:meta}.
Fix~$d$,~$m$, and~$V$ as there and assume that the theorem does not 
hold. Choose a counterexample consisting of 
mountains~$\fB_1,\dots,\fB_t\subseteq\powerset(V)$ and a 
map~$f\colon V\lra\NN_0$ such that~$|\fB_1|$ is minimal.
The argument which gave~\eqref{eq:0038} generalises as follows. 

\begin{claim}\label{cl:smalldegvtsexist}
If~$M\in\fB_1$ is inclusion-maximal, then there are at least three 
vertices~$x\in M$ with~$d_{\fB_1}(x)\leq 2^d-m$.
In particular,~$\vert F\vert\leq d$ for all~$F\in\fB_1$.
\end{claim}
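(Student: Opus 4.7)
The plan is to generalise the deletion-and-compensation argument that gave~\eqref{eq:0038}: assume for contradiction that some inclusion-maximal $M\in\fB_1$ violates the three-vertex conclusion, so that the ``low-degree'' set $L=\{x\in M\colon d_{\fB_1}(x)\le 2^d-m\}$ satisfies $|L|\le 2$, and then produce a counterexample with strictly smaller $|\fB_1|$.

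Concretely, I would form $\fB_1'=\fB_1\setminus\{M\}$ (still an $\NN_{\geq 2}$-complex, since $M$ is inclusion-maximal) and define $f'(y)=f(y)+1$ for $y\in L$ and $f'(y)=f(y)$ otherwise. The minimum-degree hypothesis for $(\fB_1',\fB_2,\dots,\fB_t,f')$ should then survive: on $L$ the $+1$ in $f$ compensates the $-1$ in $d_{\fB_1}$; on $M\setminus L$ we have $d_{\fB_1}(y)\ge 2^d-m+1$ by definition of $L$, so after subtracting $1$ the degree sum is still $\ge 2^d-m$; off $M$ nothing changes. Comparing the two weighted sums, removing $M$ lowers $\sum_\tau|\fB_\tau|$ by exactly $1$, while bumping $f$ raises $\tfrac12\sum_x f(x)$ by $|L|/2\le 1$; hence the new left-hand side is no larger than the old one and still strictly below $\tfrac{2^{d+1}-m}{d+1}|V|$. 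Since $|\fB_1'|<|\fB_1|$, this contradicts the minimality of the chosen counterexample.

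The ``in particular'' clause then drops out at once: if some $F\in\fB_1$ had $|F|\ge d+1$, extending it to an inclusion-maximal $M\supseteq F$ would force $d_{\fB_1}(x)\ge 2^{|M|-1}-1\ge 2^d-1>2^d-m$ for every $x\in M$ (using $m\ge 2$), contradicting the three-vertex statement just proved. The delicate point that needs the most care is the arithmetic of the LHS change, $-1+|L|/2\le 0$, together with the degree check on $M\setminus L$; both hinge on the weight $\tfrac12$ in front of $f(x)$ being precisely calibrated so that compensating two vertices via $f$ costs exactly as much as removing one set from $\fB_1$. The only real corner case is $|\fB_1|=1$, which forces $|M|=2$ and $\fB_1'=\varnothing$; there one simply drops $\fB_1$ from the tuple and reindexes the remaining mountains, and the same accounting goes through verbatim.
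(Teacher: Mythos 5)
Your proof is correct and follows essentially the same route as the paper: delete the inclusion-maximal set $M$, compensate the at most two low-degree vertices by incrementing $f$, and use the minimality of $\vert\fB_1\vert$ together with the accounting $-1+\vert L\vert/2\le 0$ to contradict being a counterexample. The explicit derivation of the ``in particular'' clause (via $d_{\fB_1}(x)\ge 2^{\vert M\vert-1}-1>2^d-m$) and the remark on the $\fB_1'=\varnothing$ corner case are fine and consistent with what the paper leaves implicit.
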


\begin{proof}
Let~$M'=\{x\in M\colon d_{\fB_1}(x)\leq 2^d-m\}$ and 
define~$\fB_1'=\fB_1\setminus \{M\}$ and
\begin{align*}
f'(x)=\begin{cases}
	f(x)+1 &\text{ if } x\in M'\\
	f(x) &\text{ if } x\in V\setminus M'\,.
\end{cases}
\end{align*}
Now~$\fB_1',\fB_2,\dots,\fB_t$ satisfy the conditions in the statement of 
the theorem. Since we chose the counterexample such that~$\vert\fB_1\vert$ is 
minimal and~$\vert\fB_1'\vert<\vert\fB_1\vert$, we infer that
\begin{align*}
\Big(\frac{\vert M'\vert}{2}-1\Big)
+\sum_{\tau=1}^t\vert\fB_{\tau}\vert+\sum_{x\in V}\Big(\frac{f(x)}{2}+1\Big)
\geq
\frac{2^{d+1}-m}{d+1}\vert V\vert\,.
\end{align*}
Thus, if~$\vert M'\vert\leq 2$, then the desired inequality would hold 
for~$\fB_1,\dots,\fB_t$ and~$f$, contradicting that these form a counterexample.
\end{proof}

For every~$x\in V$, set
\begin{align*}
\mu(x)=f(x)+\vert\{(y,\tau)\in V\times[t]\colon xy\in\fB_{\tau}\}\vert
\end{align*}
and
\begin{align*}
k(x)
=
1+\frac{f(x)}{2}+\sum_{\tau=1}^t\sum_{x\in F\in\fB_{\tau}}\frac{1}{\vert F\vert}\,.
\end{align*}

The left side of the inequality we are dealing with rewrites as $\sum_{x\in V}k(x)$.
Following the basic strategy of~\S\ref{subsec:5}, we analyse how the individual summands $k(x)$ compare to the alleged average of $\frac{2^{d+1}-m}{d+1}$. 

\begin{claim}\label{cl:bigandsmallmu}
For~$x\in V$, the following statements hold.
\begin{enumerate}
\item\label{it:munottoosmall} We have~$\mu(x)\geq d$.
\item\label{it:smallmu} If~$\mu(x)=d$, then
\begin{enumerate}
	\item\label{it:smallmuf} $f(x)=d_{\fB_2}(x)=\dots=d_{\fB_t}(x)=0$,
	$d_{\fB_1}(x)\ge 2^d-m$,
		\item\label{it:smallmuk} and there are integers~$r$ and~$s$ 
	with~$0\leq r\leq s\leq m-2$ and~$2^r\leq m-1$ such that
	\[
	k(x)
	\geq
	\frac{2^{d+1}-2}{d+1}
	-
	\Big(\frac{s}{d}+\frac{m-2-s}{d+1-r}\Big)
			\ge
	\frac{2^{d+1}-m}{d+1}-\frac{(d-2)(d-3)}{4d}
	\] 
	and the number of~$d$-sets in~$\fB_1$ containing~$x$ is exactly~$d-s$.
\end{enumerate}
\item\label{it:bigmu} If~$\mu(x)\geq d+1$, then
\begin{enumerate}
	\item\label{it:bigmuk} $k(x)\geq\frac{2^{d+1}-m}{d+1}$,
	\item\label{it:bigmukspecial} and if there is some~$s\in[0,m-2]$ 
	such that the number of~$d$-sets in~$\fB_1$ containing~$x$ is 
	exactly~$d-s-1$, then there is an integer~$r$ with~$0\leq r\leq s+1$ 
	and~$2^r\leq m$ satisfying 
	\[
	k(x)
	\geq
	\frac{2^{d+1}-2}{d+1}
	-
	\Big(\frac{s+1}{d}+\frac{m-2-s}{d+1-r}-\frac{1}{2}\Big)\,.
	\]
\end{enumerate}
\end{enumerate}
\end{claim}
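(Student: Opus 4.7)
The plan is to attach to every vertex~$x\in V$ a single auxiliary simplicial
complex~$\mathcal{V}(x)$ that encodes the entire local data at~$x$ and reduces all five sub-claims to the local lemmata of Section~\ref{sec:local}. For each~$\tau\in[t]$ the link $\cA_\tau(x)=\{F\setminus\{x\}\colon x\in F\in\fB_\tau\}$ is, after adjoining $\emptyset$, a simplicial complex on the vertex set $N_\tau(x)=\{y\in V\colon xy\in\fB_\tau\}$ of size $n_\tau$: any non-empty subset $G'$ of a $G=F\setminus\{x\}\in\cA_\tau(x)$ satisfies $G'\cup\{x\}\subseteq F$ and $|G'\cup\{x\}|\ge 2$, so the mountain property of $\fB_\tau$ forces $G'\cup\{x\}\in\fB_\tau$ and hence $G'\in\cA_\tau(x)$. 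Taking pairwise disjoint relabellings of the sets $N_\tau(x)$ and $f(x)$ fresh isolated vertices $v_1,\ldots,v_{f(x)}$, I let $\mathcal{V}(x)$ consist of $\emptyset$, the relabelled links, and the singletons $\{v_i\}$. Then $\mathcal{V}(x)$ is a simplicial complex with $|V(\mathcal{V}(x))|=\mu(x)$, $|\mathcal{V}(x)|=1+f(x)+\sum_\tau d_{\fB_\tau}(x)\ge 2^d-m+1$, and a direct term-by-term count gives $k(x)=\sum_{F\in\mathcal{V}(x)}\frac{1}{|F|+1}$.

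For parts~\ref{it:munottoosmall} and~\ref{it:smallmu}\ref{it:smallmuf}, combine $d_{\fB_\tau}(x)\le 2^{n_\tau}-1$ with the inequality $(2^a-1)+(2^b-1)\le 2^{a+b}-1$ proved inside Fact~\ref{fa:ineq1} to obtain $\sum_\tau d_{\fB_\tau}(x)\le 2^{\mu(x)-f(x)}-1$. Inserting this into the hypothesis $f(x)+\sum_\tau d_{\fB_\tau}(x)\ge 2^d-m$, a short case analysis on $f(x)$ using $m\le d$ and $d\ge 4$ rules out $\mu(x)\le d-1$ and establishes~\ref{it:munottoosmall}. When $\mu(x)=d$, the same inequality gives $\sum_\tau(2^{n_\tau}-1)\ge 2^d-d$, and Fact~\ref{fa:ineq1} itself forces $f(x)=0$ together with a unique $\tau^\star\in[t]$ such that $n_{\tau^\star}=d$ and $n_\tau=0$ for $\tau\ne\tau^\star$, from which $d_{\fB_{\tau^\star}}(x)\ge 2^d-m\ge 2^d-d$. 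If $\tau^\star\ge 2$, then every set of $\fB_{\tau^\star}$ has size less than $d$, so $\cA_{\tau^\star}(x)$ has all elements of size at most $d-2$, yielding $d_{\fB_{\tau^\star}}(x)\le 2^d-d-2$, a contradiction. Hence $\tau^\star=1$, proving~\ref{it:smallmu}\ref{it:smallmuf}.

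For~\ref{it:smallmu}\ref{it:smallmuk}, the previous step reduces $\mathcal{V}(x)$ to $\{\emptyset\}\cup\cA_1(x)$ on exactly $d$ vertices, with $\ge 2^d-m+1$ elements, all of size at most $d-1$ by Claim~\ref{cl:smalldegvtsexist}. If $\mathcal{V}(x)$ has exactly $d-s$ sets of size $d-1$, then $[d]$ together with the $s$ missing $(d-1)$-sets show $|\powerset([d])\setminus\mathcal{V}(x)|\ge s+1$, so $s\le m-2$. Lemma~\ref{lem:local} then supplies the first inequality, and the second is a routine algebraic consequence, equivalent after rearranging to
\[
\tfrac{s}{d}+\tfrac{m-2-s}{d+1-r}\le\tfrac{m-2}{d+1}+\tfrac{(d-2)(d-3)}{4d}\,,
\]
verifiable from $r\le s\le m-2$, $2^r\le m-1$, and $m\le d$. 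For part~\ref{it:bigmu}, $\mathcal{V}(x)$ now satisfies $|V(\mathcal{V}(x))|\ge d+1$ and $|\mathcal{V}(x)|\ge 2^d-m+1$, so Lemma~\ref{lem:unified}\ref{it:lua} immediately yields~\ref{it:bigmu}\ref{it:bigmuk}. Since all sets of $\fB_\tau$ have size less than $d$ for $\tau\ge 2$, the $(d-1)$-sets of $\mathcal{V}(x)$ can only come from $\cA_1(x)$; hence if $\fB_1$ contains exactly $d-s-1$ sets of size $d$ through $x$, then $\mathcal{V}(x)$ has exactly $d-s-1$ sets of size $d-1$, and Lemma~\ref{lem:unified}\ref{it:lub} supplies~\ref{it:bigmu}\ref{it:bigmukspecial}. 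The main obstacle I anticipate is the algebraic verification of the second estimate in~\ref{it:smallmu}\ref{it:smallmuk}, which, although elementary, requires careful bookkeeping across all admissible pairs $(s,r)$.
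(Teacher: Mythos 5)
Your argument is correct and is essentially the paper's own proof: you build the same vertex-disjoint union of links (plus $f(x)$ isolated vertices), apply Lemma~\ref{lem:unified} when $\mu(x)\ge d+1$, and use Fact~\ref{fa:ineq1} together with the size restriction on $\fB_2,\dots,\fB_t$ and Claim~\ref{cl:smalldegvtsexist} to force $f(x)=0$ and a single full link coming from $\fB_1$, reducing the case $\mu(x)=d$ to Lemma~\ref{lem:local}. The algebraic step you defer is indeed routine and closes exactly as in the paper: $2^r\le m-1\le d-1$ gives $r\le d-3$, hence $\frac{s}{d}+\frac{m-2-s}{d+1-r}\le\frac{s}{4}+\frac{m-2-s}{4}=\frac{m-2}{d+1}+\frac{(m-2)(d-3)}{4(d+1)}\le\frac{m-2}{d+1}+\frac{(d-2)(d-3)}{4d}$.
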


\begin{proof}
We commence by defining vertex-disjoint simplicial 
complexes~$\cA_1,\dots,\cA_{t+f(x)}$ such that
\begin{itemize}
\item $\cA_{\tau}\cong\{\emptyset\}\cup\{F\setminus\{x\}
\colon x\in F\in\fB_{\tau}\}$ for~$\tau\in[t]$,
\item $\cA_{t+i}\cong\powerset([1])$ for~$i\in[f(x)]$.
\end{itemize}
Then~$\cA=\cA_1\cup\dots\cup\cA_{t+f(x)}$ is a simplicial complex 
with~$\vert V(\cA)\vert=\mu(x)$,
\[
\vert\cA\vert
=
1+f(x)+\sum_{\tau\in[t]}d_{\fB_{\tau}}(x)
\geq 
2^d-m+1\,,
\]
and~$\sum_{F\in\cA}\frac{1}{\vert F\vert+1}=k(x)$.
Due to $|F|<d$ for all $F\in\bigcup_{\tau\in[2, t]}\fB_\tau$ 
and Claim~\ref{cl:smalldegvtsexist}, we have 
\[|\cA\cap V^{(d-1)}|=|\{F\in \fB_1\cap V^{(d)}\colon x\in F\}|
\quad \text{ and } \quad
\cA\cap V^{(d)}=\vn\,.
\]Therefore, Lemma~\ref{lem:unified} implies clause~\eqref{it:bigmu}	and we can 
assume~$\mu(x)\leq d$ throughout the remainder of the argument. 

Set~$n_i=\vert V(\cA_i)\vert$ for~$i\in[t+f(x)]$.
Owing to~$\sum_{i\in[t+f(x)]}n_i=\mu(x)\leq d$ and 
\[
\sum_{i\in[t+f(x)]}(2^{n_i}-1)
\ge
\sum_{i\in[t+f(x)]}(\vert\cA_i\vert-1)
=
\vert\cA\vert-1\geq 2^d-m\geq2^d-d\,,
\]
Fact~\ref{fa:ineq1} guarantees that there is an index~$i\in[t+f(x)]$ such 
that~$n_i=d$ and~$n_j=0$ for all~$j\in[t+f(x)]\setminus\{i\}$.
This immediately implies~$f(x)=0$ and~$\mu(x)=d$; 
so~\eqref{it:munottoosmall} is proved and for~\eqref{it:smallmuf}
it suffices to show~$i=1$, which is what we shall do next.

Define~$s$ such that
\[
d-s
=
\vert\{F\in\fB_i\colon x\in F\text{ and }\vert F\vert=d\}\vert\,.
\]
In other words,~$d-s$ is the number of~$(d-1)$-sets in~$\cA$, 
whence~$d-s\leq\binom{\vert V(\cA)\vert}{d-1}=\binom{d}{d-1}=d$ and~$s\geq 0$.
Moreover, $\cA\cap V^{(d)}=\vn$ yields
\[
2^d-m+1
\leq
\vert\cA\vert
\leq 
\sum_{j=0}^{d-2}\binom{d}{j}+(d-s)=2^d-s-1\,,
\]
whence~$m\geq s+2$ and~$d-s\geq d-m+2\geq2$. In particular, $\fB_i$ contains a $d$-set, 
which proves~$i=1$.			

Furthermore, we have now shown that~$\cA$ satisfies all the conditions to apply 
Lemma~\ref{lem:local}, which in turn implies~\eqref{it:smallmuk} apart from the 
second inequality. To confirm this final part, we observe 
that~$2^r\leq m-1\leq d-1<2^{d-2}$ gives~$r\leq d-3$, wherefore
\begin{align*}
\frac{s}{d}+\frac{m-2-s}{d+1-r}
&\le
\frac{s}{4}+\frac{m-2-s}{4}
=
\frac{m-2}{4}
=
\frac{m-2}{d+1}+\frac{(m-2)(d-3)}{4(d+1)} \\
&\le 
\frac{m-2}{d+1}+\frac{(d-2)(d-3)}{4d}\,. \qedhere
\end{align*}
\end{proof}

In particular, all `problematic vertices' $x$ with $k(x)<\frac{2^{d+1}-m}{d+1}$
satisfy $\mu(x)=d$. This allows us to study their `neighbourhoods', which are 
completely contained in~$\fB_1$. We are thereby led to the notion of a 
conglomerate, which can be thought of as an adaptation of quartettes 
from~\S\ref{subsec:5} to the present circumstances.  

\begin{dfn}\label{dfn:cong}
A {\it conglomerate} is a set~$K\in V^{(d+1)}$ for which there is a 
vertex~$x\in K$ with 
\[
\vert\{A\colon x\in A\subseteq K\}\setminus\fB_1\vert\leq m\,.
\]
\end{dfn}

Roughly speaking, `most' subsets of a conglomerate are in $\fB_1$. This can be made precise as follows. 

\begin{claim}\label{cl:interactionconglob1}
If~$K$ denotes a conglomerate, then
\begin{enumerate}
\item\label{it:upperlayerconglo} $\vert K^{(d)}\cap\fB_1\vert\geq d-m+2\geq 2$,
\item\label{it:b1closetoconglo} $\vert\powerset(K)\setminus\fB_1\vert\leq d+2m$,
\item\label{it:b1closetocongloeverywhere} for every~$z\in K$, 
we have $\vert\{A\colon z\in A\subseteq K\}\setminus\fB_1\vert \leq2m-1$,
\item\label{it:1415} and $K^{(2)}\subseteq \fB_1$.
\end{enumerate}
\end{claim}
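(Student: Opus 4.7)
The plan is to reduce all four statements to a single auxiliary bound that extends Definition~\ref{dfn:cong} from ``subsets containing $x$'' to arbitrary subsets of~$K$. Fix a vertex $x\in K$ witnessing that $K$ is a conglomerate and set $L=\{A:x\in A\subseteq K\}\setminus \fB_1$, so that $|L|\leq m$, $\{x\}\in L$ (since $\fB_1$ is a mountain), and $K\in L$ (by Claim~\ref{cl:smalldegvtsexist}).

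The crucial intermediate step is the estimate $(\ast)$: at most $m-1$ subsets $A'\subseteq K\setminus\{x\}$ with $|A'|\geq 2$ are missing from $\fB_1$. To prove $(\ast)$, I would send each such $A'$ to $A'\cup\{x\}$, which again lies outside $\fB_1$ (otherwise the mountain property applied to $A'\subseteq A'\cup\{x\}$ would give $A'\in \fB_1$) and therefore belongs to~$L$. This map is injective into $L\setminus\{\{x\}\}$, whose cardinality is at most $|L|-1\leq m-1$.

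Given $(\ast)$, each of (i)--(iv) becomes a short counting exercise. For (i), the $d$-subsets of~$K$ containing~$x$ are precisely the sets $K\setminus\{y\}$ with $y\in K\setminus\{x\}$, of which at most $|L|-2\leq m-2$ lie in~$L$ (since $\{x\}$ and $K$ take up two slots in~$L$ without being $d$-sets), so $|K^{(d)}\cap\fB_1|\geq d-(m-2)=d-m+2$. For (ii), splitting $\powerset(K)\setminus \fB_1$ according to whether a set contains~$x$ yields at most $m$ subsets through~$x$ and, among subsets of $K\setminus\{x\}$, at most $1+d+(m-1)$ missing ones (the empty set, the $d$ singletons, and the larger ones controlled by~$(\ast)$), for a total of $d+2m$. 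For (iii) the case $z=x$ is immediate, and for $z\neq x$ the same split gives at most $(|L|-1)+\bigl(1+(m-1)\bigr)=2m-1$, the first summand subtracting $\{x\}\in L$ which does not contain~$z$. For (iv), if some $\{y,z\}\in K^{(2)}$ were missing from~$\fB_1$, then the mountain property would force every superset of $\{y,z\}$ inside $\powerset(K)$ to be missing; depending on whether $x\in\{y,z\}$, this yields $2^{d-1}\leq |L|\leq m$ or, via~$(\ast)$, $2^{d-2}\leq m-1$, each contradicting $d\geq 4$ and $m\leq d$.

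The main obstacle I anticipate is spotting $(\ast)$: the conglomerate condition only gives a grip on subsets that pass through~$x$, whereas parts (ii)--(iv) ultimately require bounds on subsets of~$K$ that avoid~$x$. The mountain structure of~$\fB_1$ is what bridges this gap, and once $(\ast)$ has been isolated the remainder amounts to choosing the right element of~$L$ to subtract in each inclusion--exclusion step.
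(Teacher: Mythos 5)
Your proof is correct and follows essentially the same route as the paper: the key injection $A'\longmapsto A'\cup\{x\}$ into the missing sets through~$x$ (using the mountain property), the count of small missing sets, and the contradiction from the $2^{d-1}$ (or $2^{d-2}$) missing supersets of a missing pair all match the paper's argument. The only difference is organisational — you isolate the bound $(\ast)$ and prove \eqref{it:b1closetocongloeverywhere} and \eqref{it:1415} directly from it, whereas the paper derives them successively from \eqref{it:b1closetoconglo} — which changes nothing of substance.
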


\begin{proof}
Pick~$x\in K$ such that setting~$\cA=\{A\colon x\in A\subseteq K\}$ 
we have~$\vert\cA\setminus\fB_1\vert\leq m$ (which is possible by 
Definition~\ref{dfn:cong}).
Using that~$\fB_1$ does not contain any set of size~$d+1$, we obtain
\begin{align*}
2^d-m
&\leq
\vert\cA\cap\fB_1\vert
=
\sum_{i=2}^{d}\vert\cA\cap\fB_1\cap K^{(i)}\vert\\
&\leq
\sum_{i=2}^{d-1}\binom{d}{i-1}+\vert\fB_1\cap K^{(d)}\vert\\
&=
2^d-d-2+\vert\fB_1\cap K^{(d)}\vert\,,
\end{align*}
which proves~\eqref{it:upperlayerconglo}.

Now we consider the decomposition
\begin{align*}
\powerset(K)\setminus\fB_1
=
\{A\subseteq K\colon \vert A\vert\leq 1\}
&\dcup
\{A\subseteq K\colon \vert A\vert\geq 2\text{ and }x\in A\not\in\fB_1\}\\
&\dcup
\{A\subseteq K\colon\vert A\vert\geq 2\text{ and }x\notin A\not\in\fB_1\}\,.
\end{align*}
The first set on the right side contains~$d+2$ elements, and the second one at 
most~$m-1$ (by the choice of~$x$). Moreover, since~$\fB_1$ is a mountain, 
the map $A\longmapsto A\dcup\{x\}$ is an injection from the third set into 
the second. Therefore, also the third set contains at most~$m-1$ elements.
Combining these bounds we 
infer~$\vert\powerset(K)\setminus\fB_1\vert\leq(d+2)+2(m-1)=d+2m$, 
which proves~\eqref{it:b1closetoconglo}.

Given any~$z\in K$, among the at most~$d+2m$ sets in~$\powerset(K)\setminus\fB_1$
there are at least~$d+1$ not containing~$z$, namely~$\vn$ and the~$d$ sets~$\{y\}$		with~$y\in K\sm\{z\}$.
Thus, we indeed have $\vert\{A\colon z\in A\subseteq K\}\setminus\fB_1\vert\le (d+2m)-(d+1)=2m-1$,
which proves~\eqref{it:b1closetocongloeverywhere}.

Finally, assume that contrary to~\eqref{it:1415} there is some 
pair $yz\in K^{(2)}\sm\fB_1$. Since none of the~$2^{d-1}$ sets~$A$ 
with $yz\subseteq A\subseteq K$ can belong to~$\fB_1$, we then obtain 
\[
|\{A\colon z\in A\subseteq K\}\sm\fB_1|\ge 2^{d-1}>2d-1\ge 2m-1\,,
\] 
which contradicts~\eqref{it:b1closetocongloeverywhere}. 
\end{proof}

\begin{claim}\label{cl:weightheavyvts}
If a vertex~$x\in V$ satisfies $\mu(x)=d$, then there is a unique conglomerate 
containing $x$, namely the set $K=\{x\}\cup\{y\in V\colon xy\in\fB_1\}$.
\end{claim}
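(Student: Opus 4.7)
The plan is to combine Claim~\ref{cl:bigandsmallmu}\eqref{it:smallmu} with the mountain property of~$\fB_1$ to identify $K$ explicitly, and then use Claim~\ref{cl:interactionconglob1}\eqref{it:1415} to force any conglomerate through $x$ to lie in this set.

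First I would invoke Claim~\ref{cl:bigandsmallmu}\eqref{it:smallmuf}: since $\mu(x)=d$, we have $f(x)=0$ and $d_{\fB_\tau}(x)=0$ for all $\tau\ge 2$, so the~$d$ pairs $(y,\tau)$ counted by $\mu(x)$ are precisely the pairs with~$\tau=1$ and $xy\in\fB_1$. Hence $|\{y\in V\colon xy\in\fB_1\}|=d$, so the candidate set $K=\{x\}\cup\{y\in V\colon xy\in\fB_1\}$ has exactly $d+1$ elements and thus lies in $V^{(d+1)}$.

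Next I would verify that $K$ is a conglomerate by exhibiting $x$ as the witness required by Definition~\ref{dfn:cong}. The key observation is that, because $\fB_1$ is a mountain (i.e.\ an $\NN_{\ge 2}$-complex), every edge $F\in\fB_1$ with $x\in F$ satisfies $\{x,y\}\in\fB_1$ for each $y\in F\setminus\{x\}$, whence $y\in K$ and therefore $F\subseteq K$. This inclusion gives
\[
\{F\in\fB_1\colon x\in F\}\subseteq \{A\colon x\in A\subseteq K\}\,,
\]
and since the right side has exactly $2^d$ elements while the left side has size $d_{\fB_1}(x)\ge 2^d-m$ (again from Claim~\ref{cl:bigandsmallmu}\eqref{it:smallmuf}), we conclude $|\{A\colon x\in A\subseteq K\}\setminus\fB_1|\le m$, as required.

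For uniqueness, suppose $K'$ is any conglomerate with $x\in K'$. Applying Claim~\ref{cl:interactionconglob1}\eqref{it:1415} to $K'$, every pair inside $K'$ belongs to $\fB_1$; in particular $xy\in\fB_1$ for every $y\in K'\setminus\{x\}$, which forces $y\in K$. Thus $K'\subseteq K$, and since both sets have cardinality $d+1$, they must coincide. The only subtlety is making sure that the vertices $y\in V$ witnessing $\mu(x)=d$ are really the neighbours of $x$ in $\fB_1$, which is handled cleanly by Claim~\ref{cl:bigandsmallmu}\eqref{it:smallmuf}; beyond that, the argument is essentially bookkeeping.
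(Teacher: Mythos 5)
Your proposal is correct and follows essentially the same route as the paper: use Claim~\ref{cl:bigandsmallmu}\eqref{it:smallmuf} to get $|K|=d+1$, the mountain property of~$\fB_1$ to show every edge through $x$ lies in $K$ (so $|\{A\colon x\in A\subseteq K\}\setminus\fB_1|\le m$ and $K$ is a conglomerate with witness $x$), and Claim~\ref{cl:interactionconglob1}\eqref{it:1415} for uniqueness. No gaps.
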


\begin{proof}	
Claim~\ref{cl:bigandsmallmu}\eqref{it:smallmuf} discloses~$|K|=d+1$. 
As every set $A\in\fB_1$ with $x\in A$ needs to be a subset of $K$, 
we also 
have $|\{A\colon x\in A\subseteq K\}\setminus\fB_1|=2^d-d_{\fB_1}(x)\leq m$,
which proves that~$K$ is indeed a conglomerate. 
The uniqueness of $K$ is immediate from 
Claim~\ref{cl:interactionconglob1}\eqref{it:1415}.
\end{proof}

\begin{claim}\label{cl:largemtnedgeconglo}
Let~$K$ be a conglomerate and~$D\in\fB_1\cap V^{(d)}$. If~$D\not\subseteq K$, 
then~$|D\sm K|\ge 3$ and every $x\in D\cap K$ satisfies 
$k(x)\ge \frac{2^{d+1}-1}{d+1}+\frac 12$. 
\end{claim}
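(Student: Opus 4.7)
My plan is to use a single simplicial-complex computation to deduce both conclusions. Fix $x\in D\cap K$ and consider the simplicial complex $\cA$ attached to $x$ as in the proof of Claim~\ref{cl:bigandsmallmu}, so that $k(x)=\sum_{F\in\cA}\tfrac{1}{|F|+1}$. I would bound $|\cA|$ from below by combining two sources of elements: Claim~\ref{cl:interactionconglob1}\eqref{it:b1closetocongloeverywhere} applied at $x\in K$ ensures that at most $2m-1$ subsets of $K$ containing $x$ lie outside $\fB_1$, giving at least $2^d-2m+2$ sets to $\cA$ (including $\vn$); and the mountain property of $\fB_1$, applied to $D\in\fB_1$, adds all $2^{d-1}-1$ non-empty subsets of $D\sm\{x\}$ to $\cA$. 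The overlap between these two contributions consists of subsets of $(D\cap K)\sm\{x\}$, of which there are at most $2^{d-c-1}$, where $c=|D\sm K|$. Inclusion-exclusion yields
\[
	|\cA|\ge 2^d+2^{d-1}-2^{d-c-1}-2m+2\,.
\]

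If $|D\sm K|\ge 3$, then $2^{d-1}-2^{d-c-1}\ge 7\cdot 2^{d-4}$, and a routine check of $7\cdot 2^{d-4}\ge 2m-1$ for $d\ge 4$ and $m\le d$ shows $|\cA|\ge 2^d+1$. An application of Lemma~\ref{lem:Katona} with $M=\NN_0$ and weights $w_j=\tfrac{1}{j+1}$ then gives
\[
	k(x)\ge\sum_{F\in\cR(|\cA|)}\frac{1}{|F|+1}\ge\sum_{F\in\cR(2^d+1)}\frac{1}{|F|+1}=\frac{2^{d+1}-1}{d+1}+\frac{1}{2},
\]
where the last identity uses $\cR(2^d+1)=\powerset([d])\cup\{\{d+1\}\}$ in the $\strictif$-ordering, the extra $\tfrac{1}{2}$ coming from the singleton $\{d+1\}$.

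The main obstacle is establishing the first assertion, $|D\sm K|\ge 3$. I would argue by contradiction: suppose $c=|D\sm K|\in\{1,2\}$ and pick $y\in D\sm K$. The mountain property forces $\{y,z\}\in\fB_1$ for every $z\in D\sm\{y\}$, so together with Claim~\ref{cl:bigandsmallmu}\eqref{it:munottoosmall} we have $\mu(y)\ge d$. When $\mu(y)=d$, Claim~\ref{cl:weightheavyvts} produces a second conglomerate $K^y=\{y\}\cup\{z\in V\colon yz\in\fB_1\}$, which contains $D$ and therefore satisfies $|K\cap K^y|\ge|D\cap K|=d-c\ge 2$. Combining the structural bounds of Claim~\ref{cl:interactionconglob1} for both $K$ and $K^y$ with the mountain property then forces too many subsets of $K\cup K^y$ into $\fB_1$, contradicting the defining constraints of the two overlapping conglomerates. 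The case $\mu(y)>d$ follows the same scheme once the additional degree contributions at $y$ are accounted for. Extracting a clean contradiction from the two-conglomerate configuration is the delicate technical step on which the whole argument hinges.
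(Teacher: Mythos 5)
Your handling of the second assertion is correct and actually a bit slicker than the paper's: instead of bounding the within-$K$ contribution via Lemma~\ref{lem:weightboundsimple} and then adding the $1$- and $2$-element sets coming from $D\sm K$ by hand, you merge both contributions into one cardinality bound and apply Lemma~\ref{lem:Katona} once, using $\cR(2^d+1)=\powerset([d])\cup\{\{d+1\}\}$. One small repair: the overlap of your two families consists of the \emph{non-empty} subsets of $(D\cap K)\sm\{x\}$, so it has size $2^{d-c-1}-1$, not $2^{d-c-1}$; with the looser overlap bound you would only reach $|\cA|\ge 2^d$ in the extremal case $(d,m,c)=(4,4,3)$, so this $-1$ is what actually yields your displayed inequality and hence $|\cA|\ge 2^d+1$.

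The genuine gap is the first assertion, $|D\sm K|\ge 3$, which your sketch does not prove. Deriving a contradiction from two conglomerates sharing at least two vertices is exactly the content of Claim~\ref{cl:conglointersection}, which in the paper comes \emph{after} the present claim; both Claim~\ref{clm:1802} and Claim~\ref{cl:conglointersection} invoke the present claim, and their proofs rest on a global step (creating a new mountain $\fB_{t+1}$ and using the minimality of $|\fB_1|$), so your route is circular in the paper's logical order and, on its own terms, leaves the decisive step unproved. Moreover the local counting you gesture at does not close: the two lower bounds from Claim~\ref{cl:interactionconglob1}\eqref{it:b1closetocongloeverywhere} at a common vertex can overlap in up to $2^{|K\cap K^y|-1}\le 2^{d-1}$ sets, and $2(2^d-2m+1)-2^{d-1}$ need not exceed the only available degree upper bound $2^d-m$ (for $d=m=4$ this reads $10\le 12$); and if every $y\in D\sm K$ has $\mu(y)>d$, Claim~\ref{cl:weightheavyvts} provides no second conglomerate at all, so that case is left without any argument. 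The intended proof is much simpler and entirely local, and involves no vertex of $D\sm K$: since $\fB_1$ contains no $(d+1)$-sets, $D$ is inclusion-maximal, so Claim~\ref{cl:smalldegvtsexist} yields three vertices of $D$ with $d_{\fB_1}\le 2^d-m$; if $|D\sm K|\le 2$, one of them, say $x$, lies in $D\cap K$. Claim~\ref{cl:interactionconglob1}\eqref{it:b1closetocongloeverywhere} places at least $2^d-2m+1$ sets of $\fB_1$ through $x$ inside $K$, while the mountain property places all $2^{d-2}$ (if $|D\sm K|=1$) resp.\ $3\cdot 2^{d-3}$ (if $|D\sm K|=2$) sets $A$ with $x\in A\subseteq D$, $|A|\ge 2$, $A\not\subseteq K$ into $\fB_1$ as well; since $2^{d-2}\ge m$ for $d\ge 4\ge m$... more precisely since these counts are at least $m$, this forces $d_{\fB_1}(x)\ge 2^d-m+1$, a contradiction.
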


\begin{proof}
Assume for the sake of contradiction that~$\vert D\sm K\vert\in\{1, 2\}$.
Since~$\fB_1$ contains no sets of size~$d+1$, we know that~$D$ is inclusion-maximal
in~$\fB_1$ and Claim~\ref{cl:smalldegvtsexist} guarantees that some~$x\in D\cap K$ 
satisfies~$d_{\fB_1}(x)\leq 2^d-m$. On the other hand, 
Claim~\ref{cl:interactionconglob1}\eqref{it:b1closetocongloeverywhere} informs us 
that there are at least~$2^d-(2m-1)$ sets in~$\powerset(K)\cap\fB_1$ 
containing~$x$. Therefore, the set
\[
Q=\{A\subseteq D\colon x\in A\text{ and }\vert A\vert\geq 2
\text{ and }A\not\subseteq K\}
\] 
satisfies~$\vert Q\vert\leq m-1$. But if~$\vert D\sm K\vert=1$, 
then~$\vert Q\vert=2^{d-2}\geq d\geq m$, while~$\vert D\sm K\vert=2$
would imply~$\vert Q\vert= 3\cdot 2^{d-3}\geq 2^{d-2}\geq m$. 
This contradiction concludes the proof of~$|D\sm K|\ge 3$. 

Now let $x\in D\cap K$ be arbitrary.
Using again that Claim~\ref{cl:interactionconglob1}\eqref{it:b1closetocongloeverywhere} states 
\[
\vert\{A\colon x\in A\subseteq K\}\cap\fB_1\vert\geq 2^d-2m+1\,,
\]
we learn that the simplicial complex 
\[
\cA=\{\vn\}\cup\{A\sm\{x\}\colon x\in A\in(\fB_1\cap\powerset(K))\}
\]
satisfies~$\vert\cA\vert\geq 2^d-2d+2$, 
whence by Lemma~\ref{lem:weightboundsimple}
\[
\sum_{A\in\cA}\frac{1}{\vert A\vert+1}
\geq
\frac{2^{d+1}-1}{d+1}-\frac{2d-2}{3}\,.
\]
Due to $|D\sm K|\ge 3$, the set system 
\[
\cA'=\{A\setminus\{x\}\colon x\in A\subseteq D\text{ and }A\not\subseteq K\}
\] 
contains at least three~$1$-sets. Moreover, the number of $2$-sets in $\cA'$ is 
\[
\binom{|D|-1}2-\binom{|D\cap K|-1}2
\ge 
\binom {d-1}2-\binom{d-4}2=3d-9\ge 2d-5\,.
\]
For these reasons, we have indeed
\begin{align*}
k(x)
&\geq
\sum_{A\in\cA}\frac{1}{\vert A\vert+1}+\sum_{A\in\cA'}\frac{1}{\vert A\vert+1}\\
&\geq
\frac{2^{d+1}-1}{d+1}-\frac{2d-2}{3}+\frac{3}{2}+\frac{2d-5}{3}\\
&=
\frac{2^{d+1}-1}{d+1}+\frac{1}{2}\,. \qedhere
\end{align*}
\end{proof}

\begin{claim}\label{clm:1957}
If a conglomerate $K$ is disjoint to all other conglomerates, then
\[
\sum_{x\in K}k(x)\ge 2^{d+1}-m\,.
\]
\end{claim}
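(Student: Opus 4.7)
The plan is to lower-bound $k(x)$ for every $x\in K$ and sum over $K$. If every vertex of $K$ satisfies $\mu(x)\ge d+1$, then part~\eqref{it:bigmuk} of Claim~\ref{cl:bigandsmallmu} already yields $k(x)\ge\frac{2^{d+1}-m}{d+1}$, and summing over the $d+1$ vertices of $K$ gives the assertion directly. So I may assume that $K_\mathrm{small}:=\{x\in K:\mu(x)=d\}$ is non-empty. For every $x\in K_\mathrm{small}$, Claim~\ref{cl:weightheavyvts} identifies the unique conglomerate containing $x$ as $\{x\}\cup\{y:xy\in\fB_1\}$, and the hypothesis that $K$ is disjoint from every other conglomerate forces this set to equal $K$. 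Consequently, the $\fB_1$-neighbours of every $x\in K_\mathrm{small}$ are exactly $K\sm\{x\}$, every set of $\fB_1$ through such $x$ is contained in $K$, and in particular no $d$-set of $\fB_1$ meeting $K_\mathrm{small}$ can escape $K$.

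Next I would split the argument according to whether a $d$-set of $\fB_1$ crosses $K$. If some $D\in\fB_1\cap V^{(d)}$ satisfies $D\cap K\ne\vn$ and $D\not\subseteq K$, then the preceding observation forces $D\cap K\subseteq K_\mathrm{big}$, and Claim~\ref{cl:largemtnedgeconglo} supplies at least one vertex $x\in D\cap K$ with $k(x)\ge\tfrac{2^{d+1}-1}{d+1}+\tfrac12$, exceeding the target average $\tfrac{2^{d+1}-m}{d+1}$ by $\tfrac{m-1}{d+1}+\tfrac12$. Bounding the remaining $k(x)$ via parts~\eqref{it:smallmuk} and \eqref{it:bigmuk} of Claim~\ref{cl:bigandsmallmu} (taking $r=0$ in the former) shows that each per-vertex deficit is at most $\tfrac{m-2}{d(d+1)}$, so the single excess comfortably absorbs the sum of at most $d$ remaining deficits.

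The main obstacle is the opposite case, where every $d$-set of $\fB_1$ meeting $K$ already lies inside $K$. Here I would write
\[
\sum_{x\in K}k(x)=(d+1)+\tfrac12\sum_{x\in K}f(x)+\sum_{\tau}\sum_{F\in\fB_\tau}\frac{|F\cap K|}{|F|}\ge (d+1)+|\fB_1\cap\powerset(K)|,
\]
and exploit the downward-closure of $\fB_1$ together with the $d$-sets $K\sm\{y\}\in\fB_1$ (for $y$ outside $B:=\{z\in K:K\sm\{z\}\notin\fB_1\}$) to derive the structural bound $|\fB_1\cap\powerset(K)|\ge 2^{d+1}-2^{|B|}-d-2$. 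If $K_\mathrm{small}$ contains a vertex $x\notin B$, then the theorem's hypothesis $d_{\fB_1}(x)\ge 2^d-m$ combined with the count of $\fB_1$-sets through $x$ in the `minimal' configuration either forces $2^{|B|}\le m-1$ (which instantly closes the gap) or else forces further sets into $\fB_1\cap\powerset(K)$ that raise the structural bound by exactly $2^{|B|}+1-m$, again closing the gap. If instead $K_\mathrm{small}\subseteq B$, then $K\sm B\subseteq K_\mathrm{big}$ and I would use the additional $\tfrac12$-term in part~\eqref{it:bigmukspecial} of Claim~\ref{cl:bigandsmallmu}, applied at the $d+1-|B|$ vertices of $K\sm B$, to overcome the per-vertex deficits of the at most $|B|$ small vertices; a direct computation relying on $|B|\le m-1$ and $d\ge 4$ confirms that the excesses dominate.
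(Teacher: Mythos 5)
Your skeleton is close to the paper's argument (your subcase ``some small vertex lies outside $B$'' is exactly the paper's first case, with $B$ playing the role of its set $Q$, and the reduction $\sum_{x\in K}k(x)\ge (d+1)+|\fB_1\cap\powerset(K)|$ together with the count $|\fB_1\cap\powerset(K)|\ge 2^{d+1}-d-m-1$ is correct there), but two steps do not go through as written. First, in your crossing-$d$-set case you bound each remaining vertex's deficit by ``taking $r=0$'' in Claim~\ref{cl:bigandsmallmu}\eqref{it:smallmuk}. You cannot choose $r$: it is existentially quantified and is dictated by the structure of the link (it records the size of a minimal missing set, cf.\ Lemma~\ref{lem:local}), and it can be as large as $\lfloor\log_2(m-1)\rfloor$. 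The deficit that actually follows from \eqref{it:smallmuk} is $\tfrac{s}{d(d+1)}+\tfrac{(m-2-s)r}{(d+1)(d+1-r)}$, not $\tfrac{m-2}{d(d+1)}$; for instance with $d=m=16$, $s=4$, $r=3$ this is about $0.14$ per vertex, while the single excess $\tfrac{m-1}{d+1}+\tfrac12$ provided by Claim~\ref{cl:largemtnedgeconglo}A is below $1.4$, so ``one excess absorbs at most $d$ deficits'' is not guaranteed. The paper never makes this global case split: crossing $d$-sets are handled vertex by vertex inside the proof of its estimate \eqref{eq:1537}, and the final summation is never reduced to one excess versus all deficits.

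Second, and more importantly, in your last case ($K_{\mathrm{small}}\subseteq B$) the sentence ``a direct computation relying on $|B|\le m-1$ and $d\ge 4$ confirms that the excesses dominate'' is precisely the nontrivial inequality the paper isolates as Fact~\ref{fa:dmqrr's} (together with Corollary~\ref{fa:ineq2} and the comparison, via the first part of Fact~\ref{fa:dmqrr's}, of the bound at big vertices of $B$ obtained from Claim~\ref{cl:bigandsmallmu}\eqref{it:bigmukspecial} with $s-1$ against the bound at small vertices). Writing $|B|=s+1$, one has to verify $(s+1)\bigl(\tfrac{s}{d}+\tfrac{m-2-s}{d+1-r'}\bigr)+(d-s)\bigl(\tfrac{s+1}{d}+\tfrac{m-2-s}{d+1-r}-\tfrac12\bigr)\le m-2$ with $r,r'$ at their structural maxima, and this inequality is \emph{tight}: at $d=m=4$, $|B|=2$ the three excesses of $\tfrac1{15}$ exactly cancel the two deficits of $\tfrac1{10}$ (this is why the paper's proof of Fact~\ref{fa:dmqrr's} must treat the case $(d,r)=(4,2)$ separately). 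Hence no slack-based ``the excesses comfortably dominate'' argument can replace the computation, and your proposal does not supply it; also the $r$-parameters at distinct vertices may differ, so one needs the monotone worst-case choice of $r,r'$ as in the paper. Until these two points are filled in, the proof is incomplete.
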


\begin{proof}
Set 
\[
Q=\{x\in K\colon K\setminus\{x\}\not\in\fB_1\}
\]
and suppose first that some vertex~$x\in K\sm Q$ satisfies~$\mu(x)=d$. 
Due to Claim~\ref{cl:weightheavyvts}, we know $K=\{x\}\cup\{y\in V\colon xy\in\fB_1\}$ 
and, therefore, Claim~\ref{cl:bigandsmallmu}\eqref{it:smallmuf} tells us that at least~$2^d-m$ sets in~$\powerset(K)\cap\fB_1$ 
contain~$x$. Further,~$K\setminus\{x\}\in\fB_1$ and~$\fB_1$ being a mountain 
means that~$\powerset(K)\cap\fB_1$ contains~$2^d-d-1$ sets which do not 
contain~$x$.
We infer that
\[
\vert\powerset(K)\cap\fB_1\vert\geq 2^{d+1}-d-m-1\,.
\]
Since~$k(z)\geq1+\sum_{z\in F\in\powerset(K)\cap\fB_1}\frac{1}{\vert F\vert}$ 
for every~$z\in K$, we have 
\[
\sum_{z\in K}k(z)
\geq
\vert K\vert+\vert\powerset(K)\cap\fB_1\vert\geq 2^{d+1}-m\,,
\]
as desired. So from now on we may focus on the case that $\mu(x)\ge d+1$ 
for all $x\in K\sm Q$. 

Our claim is clear if $k(z)\ge \frac{2^{d+1}-m}{d+1}$ holds for every~$z\in K$, whence we may assume that $k(z)<\frac{2^{d+1}-m}{d+1}$ holds for some~$z\in K$. 
Claim~\ref{cl:bigandsmallmu}\eqref{it:bigmuk} yields $\mu(z)=d$
and thus we have~$z\in Q$. Summarising this paragraph, 
\begin{equation}\label{eq:0234}
\text{there is some $z\in Q$ with $\mu(z)=d$.}
\end{equation}

In particular,~$Q$ is non-empty. Now 
Claim~\ref{cl:interactionconglob1}\eqref{it:upperlayerconglo} informs 
us that the nonnegative integer~$s$ defined by 
\[
s+1=|Q|=|K^{(d)}\sm \fB_1|
\]
can be bounded by $s\le (d+1)-(d-m+2)-1=m-2$. 

Setting
\begin{align*}
r'=&\max\{a\in\NN_0\colon a\leq s\text{ and }2^a\leq m-1\}\,,\\
r=&\max\{a\in\NN_0\colon a\leq s+1\text{ and }2^a\leq m\}\,,
\end{align*}
we contend that 
\begin{equation}\label{eq:1537}
k(x)\geq
\begin{cases}
	\frac{2^{d+1}-2}{d+1}-\big(\frac{s}{d}+\frac{m-2-s}{d+1-r'}\big)
	& \text{ if } x\in Q \cr
	\frac{2^{d+1}-2}{d+1}-\big(\frac{s+1}{d}+\frac{m-2-s}{d+1-r}-\frac{1}{2}\big)
	& \text{ if } x\in K\sm Q.
\end{cases}
\end{equation}

To see this, we first observe that if some $d$-set $D\in\fB_1$ 
satisfies $x\in D\not\subseteq K$, then Claim~\ref{cl:largemtnedgeconglo}
provides a stronger bound. So we may assume that such $d$-sets do not exist. 
Next, for $x\in K\sm Q$ we have $\mu(x)\ge d+1$ by the above discussion.
Moreover, due to the definition of $Q$, all but one of the $d-s$ sets 
in $K^{(d)}\cap \fB_1$ contain $x$ and, therefore, 
Claim~\ref{cl:bigandsmallmu}\eqref{it:bigmukspecial} leads to the desired estimate.
If $x\in Q$ and $\mu(x)=d$, we can argue 
similarly, but appealing to Claim~\ref{cl:bigandsmallmu}\eqref{it:smallmuk}
instead.
Lastly, if~$x\in Q$ and~$\mu(x)\ge d+1$, then~\eqref{eq:0234} gives~$s\ge 1$, and there are still $d-s$ sets in $K^{(d)}\cap \fB_1$ containing $x$.
By Claim~\ref{cl:bigandsmallmu}\eqref{it:bigmukspecial} applied to $s-1$ in place of $s$, we obtain
\[
k(x)
\ge
\frac{2^{d+1}-2}{d+1}-\Big(\frac{s}{d}+\frac{m-1-s}{d+1-r}-\frac{1}{2}\Big)\,.
\]
Combined with the first part of Fact~\ref{fa:dmqrr's} this concludes the 
proof of~\eqref{eq:1537}.

Lastly, the second part of Fact~\ref{fa:dmqrr's} yields
\begin{align*}
\sum_{x\in K}k(x)
&\geq 
(2^{d+1}-2)-
(s+1)\Big(\frac{s}{d}+\frac{m-2-s}{d+1-r'}\Big) \\
&\;\;\;\;\;-(d-s)\Big(\frac{s+1}{d}+\frac{m-2-s}{d+1-r}-\frac{1}{2}\Big) \\
&\geq 
2^{d+1}-m\,. \qedhere
\end{align*}
\end{proof}

If we could show that the conglomerates are mutually disjoint, then the proof 
of Theorem~\ref{thm:meta} could easily be completed as in~\S\ref{subsec:5}.
The route followed in the remaining pages of this section, however, is slightly 
different. Specifically, we shall only show that distinct conglomerates cannot  
intersect in two or more vertices (cf.\ Claim~\ref{cl:conglointersection} below).
It will then turn out that a slightly more subtle version of our argument can 
still be pushed through. Let us start with a special case.  

\begin{claim}\label{clm:1802}
If $d=4$, then any two conglomerates intersect in at most one vertex. 
\end{claim}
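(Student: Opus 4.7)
The plan is to argue by contradiction, supposing that distinct conglomerates $K_1 \neq K_2$ satisfy $|T| \geq 2$ for $T := K_1 \cap K_2$; since $|K_i| = d + 1 = 5$ one has $|T| \leq 4$, leaving the three cases $|T| \in \{2,3,4\}$ to treat.

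For $|T| \in \{3, 4\}$ I would use only Claim~\ref{cl:largemtnedgeconglo}. By Claim~\ref{cl:interactionconglob1}\eqref{it:upperlayerconglo} there are at least two $4$-sets $D = K_2 \sm \{v\}$ in $\fB_1$, and $K_1 \cap K_2 = T \subseteq K_1$ forces $|D \cap K_1| \geq |T| - 1 \geq 2$, whence $|D \sm K_1| \leq 2$. Claim~\ref{cl:largemtnedgeconglo} then imposes $D \subseteq K_1$, so $D \subseteq T$; this requires $|T| \geq |D| = 4$, and even when $|T| = 4$ pins $D = T$ uniquely, contradicting the existence of at least two such $D$. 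The case $|T| = 2$, $m = 3$ follows the same template: the hypothesis $|K_2^{(4)} \cap \fB_1| \geq d - m + 2 = 3$ is incompatible with Claim~\ref{cl:largemtnedgeconglo}, which rules out any $D = K_2 \sm \{v\}$ with $v \notin T$ (since such a $D$ meets $K_2 \sm T$ and satisfies $|D \sm K_1| = 2$), leaving only the two $v \in T$.

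The remaining case $|T| = 2$, $m = 4$ is the main obstacle, because now Claim~\ref{cl:largemtnedgeconglo} merely rules out $4$-sets $K_i \sm \{v\}$ with $v \notin T$; since there are exactly two remaining candidates $v \in T$ and Claim~\ref{cl:interactionconglob1}\eqref{it:upperlayerconglo} demands at least $d - m + 2 = 2$ such sets, we obtain the rigid structure $\fB_1 \cap K_i^{(4)} = \{K_i \sm \{u\}, K_i \sm \{w\}\}$ for $i = 1, 2$ (writing $T = \{u, w\}$). My strategy is to exploit this rigidity to boost the degree of $u$. With $K_1 = T \cup \{a, b, c\}$ and $K_2 = T \cup \{p, q, r\}$, the $4$-sets $\{u, a, b, c\}$ and $\{u, p, q, r\}$ lie in $\fB_1$, and the mountain property puts all seven size-$\geq 2$ subsets of each that contain $u$ into $\fB_1$; together with $\{u, w\} \in \fB_1$ (Claim~\ref{cl:interactionconglob1}\eqref{it:1415}) I obtain $15$ distinct members of $\fB_1$ containing $u$, so $d_{\fB_1}(u) \geq 15 > 12 = 2^d - m$.

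To conclude, pick an inclusion-maximal $M \in \fB_1$ with $\{u, w\} \subseteq M$, which exists because $\{u, w\} \in \fB_1$. Claim~\ref{cl:smalldegvtsexist} provides three vertices of $M$ with $\fB_1$-degree at most $2^d - m = 12$, and since $u$ is not among them I get $|M| \geq 4$; the same claim also gives $|M| \leq d = 4$, hence $|M| = 4$. But then applying Claim~\ref{cl:largemtnedgeconglo} to both $K_1$ and $K_2$ (using $|M \cap K_i| \geq 2$) forces $M \subseteq K_1 \cap K_2 = T$, contradicting $|M| = 4 > 2 = |T|$. The heart of the proof is thus combining the rigidity imposed by Claim~\ref{cl:largemtnedgeconglo} when $|T| = 2$ with the size-$\geq 3$ constraint in Claim~\ref{cl:smalldegvtsexist}, applied to the exceptionally tight pair $\{u, w\}$.
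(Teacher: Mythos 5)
Your proof is correct, and in the decisive case it follows a genuinely different route from the paper. Ruling out $|T|\ge 3$ via Claim~\ref{cl:interactionconglob1}\eqref{it:upperlayerconglo} and Claim~\ref{cl:largemtnedgeconglo} is exactly what the paper does. For $|T|=2$, however, the paper never splits according to $m$: it notes that no quadruple of $\fB_1$ can contain $B=T$, uses Claim~\ref{cl:interactionconglob1}\eqref{it:b1closetocongloeverywhere} to force an inclusion-maximal $3$-set $M\in\fB_1$ with $B\subsetneq M\subseteq K$ (so that Claim~\ref{cl:smalldegvtsexist} gives $d_{\fB_1}(x)\le 2^4-m$ for $x\in B$), and then applies \eqref{it:b1closetocongloeverywhere} to both conglomerates to get $d_{\fB_1}(x)\ge 9+9-1=17$, a contradiction. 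You instead never invoke \eqref{it:b1closetocongloeverywhere}: for $m=3$ you get an immediate counting contradiction with \eqref{it:upperlayerconglo}, and for $m=4$ you pin down $\fB_1\cap K_i^{(4)}=\{K_i\sm\{u\},K_i\sm\{w\}\}$ exactly, use the mountain property plus $\{u,w\}\in\fB_1$ (Claim~\ref{cl:interactionconglob1}\eqref{it:1415}) to get $d_{\fB_1}(u)\ge 15>2^4-m$, and then show that an inclusion-maximal $M\in\fB_1$ containing $\{u,w\}$ must be a quadruple, which Claim~\ref{cl:largemtnedgeconglo} forces inside $T$ --- absurd. Both arguments are sound; the paper's is shorter and uniform in $m$, while yours extracts more explicit structure (the exact quadruple sets) and avoids the slightly delicate count behind the existence of the $3$-set $M$ in the paper's version. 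A small streamlining of your $m=4$ case: by symmetry $d_{\fB_1}(w)\ge 15$ as well, so any inclusion-maximal $M\supseteq\{u,w\}$ would already need at least five vertices by Claim~\ref{cl:smalldegvtsexist}, contradicting $|M|\le 4$ without the final appeal to Claim~\ref{cl:largemtnedgeconglo}.
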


\begin{proof}
Assume that $K$, $K'$ are two distinct conglomerates whose 
intersection $B=K\cap K'$ has size at least~$2$. Due to 	
Claim~\ref{cl:interactionconglob1}\eqref{it:upperlayerconglo}
and Claim~\ref{cl:largemtnedgeconglo}, we know that~$|B|=2$. 
The latter statement also implies that there is no 
quadruple $D\in \fB_1\cap V^{(4)}$ with $B\subseteq D$. 
Yet Claim~\ref{cl:interactionconglob1}\eqref{it:b1closetocongloeverywhere} 
applied to either vertex $z\in B$ yields a 
set~$M\in\fB_1$ with~$B\subsetneq M\subseteq K$. The only possibility avoiding 
an immediate contradiction is that~$|M|=3$ and, moreover, $M$ is inclusion-maximal 
in $\fB_1$.

Let $x\in B$ be arbitrary and recall that Claim~\ref{cl:smalldegvtsexist} 
implies~$d_{\fB_1}(x)\le 2^4-m<16$.
However, Claim~\ref{cl:interactionconglob1}\eqref{it:b1closetocongloeverywhere} 
states that the number of sets~$T\in\fB_1$ with~$x\in T\subseteq K$ is at least~$9$ 
and that there are at least~$9$ sets~$T\in\fB_1$ with~$x\in T\subseteq K'$.
The only set counted twice is~$B$ and thus,~$d_{\fB_1}(x)\geq 17$, which is absurd.
\end{proof}

We prepare the general case by the following degree estimate. 

\begin{claim}
Suppose $d\ge 5$ and that $K$, $K'$ are distinct conglomerates. If a  
set $A\subseteq K\cap K'$ satisfies $\max\{2, |K\cap K'|-1\}\le |A|\le d-2$, 	
then the mountain
\[
\fB'_1=\fB_1\sm \{L\in\fB_1\colon A\subseteq L\text{ and } |L\sm A|\ge 2\}
\]
has the property that $d_{\fB'_1}(x)\ge 2^d-m$ holds for all $x\in A$.
\end{claim}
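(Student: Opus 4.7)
\emph{Proof plan.} Fix any $x \in A$. Since every set removed from $\fB_1$ in passing to $\fB'_1$ contains $A$ and hence also $x$, we have
\[
d_{\fB'_1}(x) = d_{\fB_1}(x) - R,
\qquad
R = |\{L \in \fB_1 : A \subseteq L \text{ and } |L \sm A| \ge 2\}|,
\]
where $R$ does not depend on the choice of $x \in A$. My plan is to lower-bound $d_{\fB_1}(x)$ using the conglomerate structure around both $K$ and $K'$, and to upper-bound $R$ by partitioning the bad sets according to whether they are contained in $K$, in $K'$, or in neither.

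Set $B = K \cap K'$ and $b = |B|$, so that $b \in \{a, a+1\}$ by the hypothesis $\max\{2, b-1\} \le a$. Two applications of Claim~\ref{cl:interactionconglob1}\eqref{it:b1closetocongloeverywhere} with $z = x$, viewed once inside $K$ and once inside $K'$, combined with inclusion--exclusion over the at most $2^{b-1}$ subsets of $B$ containing $x$, yield
\[
d_{\fB_1}(x) \ge 2(2^d - 2m + 1) - 2^{b-1},
\]
from sets $L \in \fB_1$ with $x \in L \subseteq K \cup K'$ alone.

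To bound $R$, I split the bad sets into three groups: (i) $L \subseteq K$ contributes at most $2^{d+1-a} - 1 - (d+1-a)$; (ii) $L \subseteq K'$ but $L \not\subseteq K$ contributes the same, with no overlap to subtract because $b \le a+1$ forces $|L \sm A| \le 1$ for any $L \subseteq B$ containing $A$; and (iii) $L \not\subseteq K \cup K'$. In case (iii) with $|L| = d$, Claim~\ref{cl:largemtnedgeconglo} applied in both $K$ and $K'$ forces $|L \cap K| \le d-3$ and $|L \cap K'| \le d-3$, which together with $A \subseteq L$ requires $a \le d-3$ and gives $L \cap K = L \cap K' = A$; consequently $L \sm A$ is a $(d-a)$-subset of $V \sm (K \cup K')$, and the mountain property ties such $L$ to subsets in $\fB_1$ that compensatorily boost $d_{\fB_1}(x)$. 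Combining these pieces and simplifying using $d \ge 5$, $3 \le m \le d$, $2 \le a \le d-2$, and $a \le b \le a+1$ should yield $d_{\fB_1}(x) - R \ge 2^d - m$. The main obstacle is controlling Category (iii) bad sets of size strictly less than $d$, where Claim~\ref{cl:largemtnedgeconglo} is silent; here I expect to invoke the mountain property, arguing that each such $L \in \fB_1$ forces its subsets of size at least $2$ into $\fB_1$, inflating $d_{\fB_1}(x)$ by enough to cover the loss from removing $L$.
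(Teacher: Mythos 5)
There is a genuine gap, and it sits exactly where the real difficulty of this claim lies: the case $|A|=2$. Your plan bounds the removed sets inside $K$ and inside $K'$ by their maximum possible number, roughly $2^{d+1-|A|}-(d+2-|A|)$ each, and sets this against the crude degree bound $2(2^d-2m+1)-2^{|K\cap K'|-1}$. For $|A|\ge 3$ this arithmetic indeed closes (it is essentially the paper's ``first case''), but for $|A|=2$ it gives only about $2^d+2d-2-4m$, which is below the target $2^d-m$ whenever $3m>2d-2$, e.g.\ for $m=d$. No amount of care with your category (iii) can rescue this, because the shortfall already occurs with categories (i) and (ii) alone. The paper's proof closes this hole with a compensation argument you have no analogue of: writing $\delta=|\{L\colon x\in L\subseteq K,\ L\notin\fB_1\}|$ and $\eps$ for the number of sets $A\cup F\subseteq K$, $|F|\ge2$, that are \emph{missing} from $\fB_1$ (and hence not actually removed), it partitions the missing sets into $\cQ_1\dcup\cQ_2\dcup\cQ_3$ according to whether they contain $A$ and their size, shows $|\cQ_2|\le 1$ (two missing $3$-sets through $A$ would force $3\cdot 2^{d-3}>2m-1$ missing sets through $x$, contradicting Claim~\ref{cl:interactionconglob1}\eqref{it:b1closetocongloeverywhere}), and injects $\cQ_3$ into $\cQ_1$ via $L\longmapsto L\cup A$ using the mountain property, yielding $\delta-\eps\le m$ and likewise $\delta'-\eps'\le m$. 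These savings of order $m$ on each side are exactly what make the $|A|=2$ case work; your sketch asserts the final inequality ``should'' follow but provides no mechanism for it.

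A secondary remark: your ``main obstacle'' (category (iii), removed sets not inside $K\cup K'$, especially those of size $<d$) is in fact a non-issue and needs neither Claim~\ref{cl:largemtnedgeconglo} nor a compensating mountain argument. Since every such $L$ lies in $\fB_1$, contains $x$, and is not inside $K\cup K'$, it was never counted in your inclusion--exclusion lower bound for $d_{\fB_1}(x)$; equivalently, one may simply bound $d_{\fB'_1}(x)$ from below by the surviving sets $L\in\fB'_1$ with $x\in L$ and $L\subseteq K$ or $L\subseteq K'$, which is what the paper does. So that part of your plan is an avoidable detour, while the genuinely needed idea ($\delta-\eps\le m$ in the $|A|=2$ case) is absent.
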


\begin{proof}
It is plain that $\fB'_1$ is indeed a mountain. Fix an arbitrary vertex~$x\in A$.
Claim~\ref{cl:interactionconglob1}\eqref{it:b1closetocongloeverywhere} 
yields integers~$\delta,\delta'\in[0,2m-1]$ such that
\begin{align*}
\vert\{L\in\fB_1\colon x\in L\subseteq K\}\vert=2^d-\delta 
\quad \text{ and } \quad 
\vert\{L\in\fB_1\colon x\in L\subseteq K'\}\vert=2^d-\delta'\,.
\end{align*}
Due to~$|A|\ge |K\cap K'|-1$, we have
\begin{align*}
&\phantom{\ge\ge}
\vert\{L\in\fB_1\colon x\in L\text{ and }(L\subseteq K\text{ or }L\subseteq K'\})\vert\\
&\ge
(2^d-\delta)+(2^d-\delta')-2^{|K\cap K'|-1}\\
&\ge
2^{d+1}-(\delta+\delta')-2^{|A|}\,.
\end{align*}
Let~$\eps,\eps'\geq 0$ be defined by
\[
|\{F\subseteq K\sm A\colon |F|\ge 2, A\cup F\in\fB_1\}|
= 
2^{d+1-|A|}-(d+2-|A|)-\eps
\]
and
\[
|\{F\subseteq K'\sm A\colon |F|\ge 2, A\cup F\in\fB_1\}|
=
2^{d+1-|A|}-(d+2-|A|)-\eps'\,.
\]
Then we can bound the degree of~$x$ in~$\fB_1'$ by
\begin{align}\label{eq:degreeinb1'lowerbound}
d_{\fB_1'}(x)
\ge 
2^{d+1}-\big(2^{|A|}+2^{d+2-|A|}\big)-(\delta+\delta')+(\eps+\eps')
+2(d+2-|A|)\,.
\end{align}

\smallskip

{\hskip2em \it First case: $|A|\ge 3$}

\smallskip

Since~$|A|\le d-2$, we get by the first observation in the proof of 
Fact~\ref{fa:ineq1} that
\[
2^{|A|-3}+2^{d-1-|A|}\le 1+ 2^{d-4}
\]
and thus,
\[
2^{|A|}+2^{d+2-|A|}\le 8+2^{d-1}\,.
\]
Using this as well as~$\delta+\delta'\le 4m-2$ and~$\eps+\eps'\ge 0$, 
we obtain
\begin{align*}
d_{\fB_1'}(x)
&\ge
2^{d+1}-(2^{d-1}+8)-(4m-2)+8
=
2^d+2^{d-1}+2-4m \\
&\ge
2^d-m+1\,,
\end{align*}
which is stronger than claimed.

\smallskip

{\hskip2em \it Second case: $|A|=2$}

\smallskip

Recall that~$\delta$ counts the elements in the 
set~$\cQ=\{L\colon x\in L\subseteq K\text{ and }L\not\in\fB_1\}$.
Consider the partition
\[
\cQ=\cQ_1\dcup\cQ_2\dcup\cQ_3
\] 
defined by
\begin{align*}
\cQ_1=&\{L\in\cQ\colon A\subseteq L\text{ and }\vert L\vert\geq 4\}\,,\\
\cQ_2=&\{L\in\cQ\colon A\subseteq L\text{ and }\vert L\vert\le 3\}\,,\\
\quad \cQ_3=&\{L\in\cQ\colon A\not\subseteq L\}\,,
\end{align*}
and note that~$\eps=\vert\cQ_1\vert$.
Claim~\ref{cl:interactionconglob1}\eqref{it:1415} entails $|L|=3$ for 
every~$L\in\cQ_2$ and~$|L|\ge 3$
for every~$L\in\cQ_3$. We would also like to point out that the sets 
in $\cQ_3$ contain~$x$ but not the other element of~$A$. 
Since~$\fB_1$ is a mountain, this shows 
that $L\longmapsto L\cup A$ is an injection from~$\cQ_3$ to~$\cQ_1$,
whence~$\vert\cQ_3\vert\leq\varepsilon$.

Now suppose for the sake of contradiction that~$\vert\cQ_2\vert\ge 2$. 
As every subset of~$K$ that contains one or two of 
the sets in~$\cQ_2$ must be missing in~$\fB_1$, this yields
\[
\vert\{F\colon x\in F\subseteq K\}\setminus\fB_1\vert
\geq 
3\cdot 2^{d-3}
>
2d-1
\ge
2m-1\,,
\]
contrary to Claim~\ref{cl:interactionconglob1}\eqref{it:b1closetocongloeverywhere}.

This proves $|\cQ_2|\le 1$, so that 
altogether~$\delta=\vert\cQ\vert\leq\eps+1+\eps$ 
and~$\delta-\eps\le\frac12(\delta+1)\le m$.
Analogously, we also have~$\delta'-\varepsilon'\leq m$.
Plugging this into~\eqref{eq:degreeinb1'lowerbound} we learn
\[
d_{\fB_1'}(x)
\ge 
2^d+2d-4-(\delta+\delta')+(\eps+\eps')
\ge 
2^d+(d-4)-m
\ge
2^d-m+1\,,
\]
as desired.
\end{proof}

We can now continue our analysis of conglomerate intersections. Incidentally, 
the proof of our next claim is the only place in the entire section where 
our freedom to create a new mountain and increase $t$ gets exploited. 
Accordingly, the argument that follows is the main reason why we switched
from discussing simplicial complexes to talking about mountains. 

\begin{claim}\label{cl:conglointersection}
Any two conglomerates intersect in at most one vertex.
\end{claim}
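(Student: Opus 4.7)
The case $d=4$ is settled by Claim~\ref{clm:1802}, so I would assume $d\ge 5$ and suppose, towards a contradiction, that distinct conglomerates $K$, $K'$ satisfy $|K\cap K'|\ge 2$. First I would rule out $|K\cap K'|\ge d-1$: any $D\in{K'}^{(d)}\cap\fB_1$ obeys $|D\setminus K|\le|K'\setminus K|=d+1-|K\cap K'|\le 2$, so Claim~\ref{cl:largemtnedgeconglo} forces $D\subseteq K$ and hence $D\subseteq K\cap K'$; but $K\cap K'$ contains no $d$-subset when $|K\cap K'|=d-1$ and only itself when $|K\cap K'|=d$, contradicting the bound $|{K'}^{(d)}\cap\fB_1|\ge d-m+2\ge 2$ supplied by Claim~\ref{cl:interactionconglob1}\eqref{it:upperlayerconglo}. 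So $|K\cap K'|\in[2,d-2]$, and setting $A=K\cap K'$ satisfies $\max\{2,|K\cap K'|-1\}\le|A|\le d-2$. The preceding (unnumbered) claim then produces the mountain $\fB_1'=\fB_1\setminus\{L\in\fB_1\colon A\subseteq L\tand|L\setminus A|\ge 2\}$ with $d_{\fB_1'}(x)\ge 2^d-m$ for every $x\in A$.

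Exploiting the freedom to create a new mountain, I would introduce the link-like family
\[
\fB_{t+1}=\{S\subseteq V\setminus A\colon |S|\ge 2\tand A\cup S\in\fB_1\}\,.
\]
That $\fB_{t+1}$ is a mountain follows directly from the mountain structure of $\fB_1$: if $S'\subseteq S\in\fB_{t+1}$ has $|S'|\ge 2$ then $A\cup S'\subseteq A\cup S\in\fB_1$ has size $\ge 2$, whence $A\cup S'\in\fB_1$ and $S'\in\fB_{t+1}$. Each set in $\fB_{t+1}$ has size at most $d-|A|\le d-2<d$, meeting the size condition on additional mountains. Moreover, the map $L\mapsto L\setminus A$ is a bijection between $\fB_1\setminus\fB_1'$ and $\fB_{t+1}$, so $|\fB_1'|+|\fB_{t+1}|=|\fB_1|$ and for every $y\in V\setminus A$ one has $d_{\fB_{t+1}}(y)=d_{\fB_1}(y)-d_{\fB_1'}(y)$; hence the degree condition is preserved for $y$, and for $x\in A$ it is preserved by the preceding claim. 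Provided $\fB_{t+1}\ne\vn$, the system $(\fB_1',\fB_2,\dots,\fB_t,\fB_{t+1},f)$ on $V$ is a legitimate input to Theorem~\ref{thm:meta} with $|\fB_1'|<|\fB_1|$, so by minimality it satisfies the theorem's inequality; substituting $|\fB_1'|+|\fB_{t+1}|=|\fB_1|$ yields literally the inequality for the original system, contradicting its counterexample status.

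The remaining obstacle is ensuring $\fB_{t+1}\ne\vn$, equivalently that some $L\in\fB_1$ has $A\subseteq L$ and $|L\setminus A|\ge 2$. Whenever $|A|\le d-m+1$ this is straightforward: among the $\ge d-m+2$ sets in $K^{(d)}\cap\fB_1$ from Claim~\ref{cl:interactionconglob1}\eqref{it:upperlayerconglo}, at least $(d+1-|A|)-(m-1)\ge 1$ are of the form $K\setminus\{u\}$ with $u\in K\setminus A$, and any such set lies in $\fB_1\setminus\fB_1'$. In the narrow remaining range $d-m+2\le|A|\le d-2$, where all $K\setminus\{u\}$ with $u\in K\setminus A$ could a priori be missing from $\fB_1$, the main obstacle of the proof arises; I would handle it by applying Claim~\ref{cl:largemtnedgeconglo} to the $\ge d-m+2\ge 2$ distinct sets $D=K\setminus\{a\}\in\fB_1$ with $a\in A$, using $|D\setminus K'|=d+1-|A|\ge 3$ and $D\cap K'=A\setminus\{a\}$ to deduce $k(x)\ge\frac{2^{d+1}-1}{d+1}+\frac12$ for every $x\in A$, and then extracting a contradiction from a careful local analysis of $K\cup K'$ that combines this surplus with the structural constraints on bad vertices provided by Claim~\ref{cl:weightheavyvts}.
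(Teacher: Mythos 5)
Your main construction is exactly the paper's: take a suitable $A\subseteq K\cap K'$, split $\fB_1$ into $\fB_1'$ and the new mountain $\fB_{t+1}$, use the unnumbered degree claim for vertices of $A$, and contradict the minimality of $|\fB_1|$ once $\fB_{t+1}\ne\vn$. The bookkeeping you do for this part (sizes of sets in $\fB_{t+1}$, the bijection $L\mapsto L\sm A$, preservation of degrees, the exclusion of $|K\cap K'|\ge d-1$ via Claim~\ref{cl:largemtnedgeconglo}) is all correct. The problem is that you fix $A=K\cap K'$ throughout, and then you can only certify $\fB_{t+1}\ne\vn$ when $|A|\le d-m+1$. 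In the remaining range (which is non-empty as soon as $m\ge 4$, and also contains $|A|=2$ when $m=d$) your proof stops being a proof: the bound $k(x)\ge\frac{2^{d+1}-1}{d+1}+\frac12$ for $x\in A$ is correctly derived, but no contradiction is extracted from it, and none is in sight. A few vertices with large $k$-value do not contradict the counterexample property (which is a global inequality), and Claim~\ref{cl:weightheavyvts} only constrains vertices with $\mu(x)=d$, whereas every $x\in K\cap K'$ automatically has $\mu(x)\ge d+1$; so the ``careful local analysis of $K\cup K'$'' is a placeholder, not an argument, and the configuration you are trying to rule out is not locally impossible.

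The paper avoids this dead end by choosing $A$ so that non-emptiness of $\fB_{t+1}$ is built in. When $|K\cap K'|\ge 3$ it picks any $D\in K^{(d)}\cap\fB_1$ (which exists by Claim~\ref{cl:interactionconglob1}\eqref{it:upperlayerconglo}) and sets $A=D\cap(K\cap K')$; then $|A|\ge\max\{2,|K\cap K'|-1\}$ still holds, and $D\sm A$ itself lies in $\fB_{t+1}$, so precisely the case where every $d$-set of $K$ in $\fB_1$ misses a vertex of $K\cap K'$ — your problematic case — is absorbed by shrinking $A$ by that one vertex. When $|K\cap K'|=2$ it keeps $A=K\cap K'$ but counts through all layers rather than only the top one: by Claim~\ref{cl:interactionconglob1}\eqref{it:b1closetoconglo} at most $2m-2<2^{d-1}-d$ of the sets $F\subseteq K\sm A$ with $|F|\ge 2$ can have $A\cup F\notin\fB_1$, which covers $m=d$ as well. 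To repair your write-up you should replace the final paragraph by this choice of $A$ (and the stronger count in the $|A|=2$ case); as it stands, the last case is a genuine gap.
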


\begin{proof}
Assume for the sake of contradiction that~$K$ and~$K'$ are two conglomerates whose 
intersection~$B=K\cap K'$ satisfies $|B|\ge 2$. Owing to Claim~\ref{clm:1802}
we know $d\ge 5$ and, as there, 
Claim~\ref{cl:interactionconglob1}\eqref{it:upperlayerconglo}
combined with Claim~\ref{cl:largemtnedgeconglo} yields $|B|\le d-2$.

We contend that there exists a set $A\subseteq B$ such 
that $|A|\ge\max\{2, |B|-1\}$ and the set system		
\[
\fB_{t+1}
=
\{F\subseteq V\setminus A\colon\vert F\vert\geq 2\text{ and }A\cup F\in\fB_1\}
\]
is non-empty.	

In the special case~$|B|=2$, this can be seen by taking~$A=B$, for then
Claim~\ref{cl:interactionconglob1}\eqref{it:b1closetoconglo} yields
\begin{align*}
&\phantom{\le}
\vert\{F\subseteq K\sm B\colon |F|\ge 2 \text{ and } F\cup B\not\in\fB_1\}\vert
\le
\vert\{L\subseteq K\colon \vert L\vert\geq 2\text{ and }L\notin\fB_1\}\vert\\
&\leq
2m-2<2d-1<2^{d-1}-d 
=
\vert\{F\subseteq K\sm B\colon \vert F\vert\geq 2\}\vert\,,
\end{align*}
so that~$\fB_{t+1}$ is indeed non-empty.

If~$|B|>2$, we pick any~$D\in K^{(d)}\cap\fB_1$ (the existence of which is 
ensured by Claim~\ref{cl:interactionconglob1}\eqref{it:upperlayerconglo}), 
and set~$A=D\cap B$.
Clearly, we have~$\vert A\vert\geq\max\{2,\vert B\vert-1\}$.
Moreover, since~$\fB_1$ is a mountain and~$|B|\leq d-2$, the set system~$\fB_{t+1}$ 
contains~$D\sm A$ and is, in particular, non-empty.

Having thereby explained the choice of an appropriate set $A$, we stipulate 
\[
\fB_1'
=
\fB_1\sm \{L\colon A\subseteq L\text{ and }\vert L\sm A\vert\geq2\}\,.
\]
Clearly, both~$\fB_1'$ and~$\fB_{t+1}$ are mountains 
with~$\vert\fB_1\vert=\vert\fB_1'\vert+\vert\fB_{t+1}\vert$
and for all~$u\in V\setminus A$, 
we have~$d_{\fB_1'}(u)+d_{\fB_{t+1}}(u)=d_{\fB_1}(u)$.
Moreover, the previous claim gives~${d_{\fB_1'}(x)\ge 2^d-m}$ for all~$x\in A$.
So~$(\fB_1',\fB_2,\dots,\fB_{t+1}, f)$ satisfies all assumptions of 
the theorem. But~${\fB_{t+1}\ne\vn}$ yields $|\fB'_1|<|\fB_1|$ 
and we reach a contradiction to the minimality of~$\vert\fB_1\vert$.
\end{proof}

\begin{claim}\label{cl:ksurplusmulticonglos}
If a vertex~$x$ belongs to~$\lambda\geq 2$ conglomerates, then
\[
k(x)-\frac{2^{d+1}-m}{d+1}\geq \frac{\lambda(d-2)(d-3)}{4}\,.
\]
\end{claim}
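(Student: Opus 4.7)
The plan is to exploit the fact that by Claim~\ref{cl:conglointersection} the $\lambda$ conglomerates $K_1,\dots,K_\lambda$ containing $x$ are pairwise disjoint outside of $x$. Consequently, any set $F\in\fB_1$ with $x\in F$ and $|F|\ge 2$ can lie in at most one $K_i$ (otherwise $F\subseteq K_i\cap K_j=\{x\}$, contradicting $|F|\ge 2$). This yields
\[
k(x)\ge 1+\sum_{i=1}^{\lambda}\sum_{\substack{F\in\fB_1\\x\in F\subseteq K_i}}\frac{1}{|F|}\,.
\]

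For each $i$, Claim~\ref{cl:interactionconglob1}\eqref{it:b1closetocongloeverywhere} guarantees that at least $2^d-2m+1$ sets in $\fB_1$ contain $x$ and are subsets of $K_i$. Set
\[
\cA_i=\{\vn\}\cup\{F\sm\{x\}\colon x\in F\in\fB_1\text{ and }F\subseteq K_i\}\,,
\]
which is a simplicial complex on the $d$-vertex set $K_i\sm\{x\}$ with $|\cA_i|\ge 2^d-2m+2\ge 2^d-2d+2$ (using $m\le d$). Lemma~\ref{lem:weightboundsimple} then yields
\[
\sum_{A\in\cA_i}\frac{1}{|A|+1}\ge\frac{2^{d+1}-1}{d+1}-\frac{2d-2}{3}\,.
\]
Since the inner sum $\sum_{x\in F\in\fB_1,\,F\subseteq K_i}\frac{1}{|F|}$ equals $\sum_{A\in\cA_i}\frac{1}{|A|+1}-1$, combining the two displays gives
\[
k(x)\ge 1-\lambda+\lambda\left(\frac{2^{d+1}-1}{d+1}-\frac{2d-2}{3}\right)\,.
\]

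It then remains to confirm the target bound algebraically. The inequality $k(x)-\frac{2^{d+1}-m}{d+1}\ge\frac{\lambda(d-2)(d-3)}{4}$ rearranges to
\[
1+\lambda\left(\frac{2^{d+1}-1}{d+1}-\frac{2d-2}{3}-1-\frac{(d-2)(d-3)}{4}\right)\ge\frac{2^{d+1}-m}{d+1}\,.
\]
Fact~\ref{fa:ineq1+1/2}, after multiplication by $2$, reads $\frac{2^{d+1}-1}{d+1}\ge 1+\frac{4(d-1)}{3}+\frac{(d-2)(d-3)}{2}$, which shows that the coefficient of $\lambda$ is at least $\frac{2(d-1)}{3}+\frac{(d-2)(d-3)}{4}\ge 0$. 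Using $\lambda\ge 2$ and applying Fact~\ref{fa:ineq1+1/2} once more, the left side is bounded below by $\frac{2^{d+1}-1}{d+1}\ge\frac{2^{d+1}-m}{d+1}$, closing the estimate.

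The routine part is the bookkeeping of subtracting the $\lambda$ copies of $\vn$; the only subtle point is checking that $\cA_i$ really is a simplicial complex (which follows because $\fB_1$ is a mountain and therefore closed under taking subsets of size at least $2$), and that the final algebra with Fact~\ref{fa:ineq1+1/2} works out, which is where the coefficient~$\tfrac14$ on $(d-2)(d-3)$ is calibrated precisely.
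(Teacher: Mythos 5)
Your proposal is correct and follows essentially the same route as the paper: using Claim~\ref{cl:conglointersection} to sum disjointly over the conglomerates through $x$, applying Claim~\ref{cl:interactionconglob1}\eqref{it:b1closetocongloeverywhere} and Lemma~\ref{lem:weightboundsimple} to each, and closing with Fact~\ref{fa:ineq1+1/2} together with $\lambda\ge 2$. The only difference is cosmetic bookkeeping in the final algebra (the paper uses a $(1+\tfrac{\lambda}{2})$ regrouping where you apply the doubled Fact~\ref{fa:ineq1+1/2} directly), which yields the same bound.
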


\begin{proof}
Let~$K$ be any conglomerate containing~$x$.
Since~$\fB_1$ is a mountain, the set system
\[
\cA=\{\emptyset\}\cup\{F\setminus \{x\}\colon x\in F\in\fB_1
\text{ and }F\subseteq K\}
\]
is a simplicial complex.
Claim~\ref{cl:interactionconglob1}\eqref{it:b1closetocongloeverywhere}
tells us
\[
\vert\cA\vert\geq 1+2^d-(2m-1)
\geq 
2^d-2d+2\,.
\]
Now we can utilise Lemma~\ref{lem:weightboundsimple} to derive
\[
\sum_{F\in\cA}\frac{1}{\vert F\vert+1}
\geq\frac{2^{d+1}-1}{d+1}-\frac{2d-2}{3}\,,
\]
and thereby,
\[
\sum_{x\in F\subseteq K\text{ and }F\in\fB_1}\frac{1}{\vert F\vert}
\geq
\frac{2^{d+1}-1}{d+1}-\frac{2d-2}{3}-1\,.
\]

Due to Claim~\ref{cl:conglointersection}, any two conglomerates that 
contain~$x$ intersect only in~$x$, whence summing over all such 
conglomerates yields
\begin{align*}
k(x)
&\geq 
1+\lambda\Big(\frac{2^{d+1}-1}{d+1}-\frac{2d-2}{3}-1\Big)\\
&\geq 
1+\Big(1+\frac{\lambda}{2}\Big)
\Big(\frac{2^{d+1}-1}{d+1}-1\Big)-\lambda\frac{2d-2}{3}\,.
\end{align*}
This gives
\[
k(x)-\frac{2^{d+1}-1}{d+1}
\geq 
\lambda\Big(\frac{2^{d+1}-1}{2(d+1)}-\frac{1}{2}-\frac{2d-2}{3}\Big)
\]
and Fact~\ref{fa:ineq1+1/2} leads to the desired estimate. 
\end{proof}

For every vertex $x\in V$, let $\lambda(x)$ denote the number of conglomerates 
containing $x$. 

\begin{claim}\label{cl:normalisedconglomerateweight}
Every conglomerate $K$ satisfies
\[
\sum_{x\in K}\frac{k(x)}{\lambda(x)}
\geq
\frac{2^{d+1}-m}{d+1}\sum_{x\in K}\frac{1}{\lambda(x)}\,.
\]
\end{claim}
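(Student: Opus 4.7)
The plan is to rewrite the desired inequality as
\[
\sum_{x\in K}\frac{1}{\lambda(x)}\Big(k(x)-\frac{2^{d+1}-m}{d+1}\Big)\geq 0
\]
and to bound the summand for each $x$ individually. Partition $K=K_1\sqcup K_2$, where $K_1=\{x\in K\colon \lambda(x)=1\}$ consists of the vertices of $K$ belonging to no other conglomerate and $K_2=K\setminus K_1$.

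If $K_2=\vn$, then $K$ is disjoint from every other conglomerate, so Claim~\ref{clm:1957} applies and gives $\sum_{x\in K}k(x)\ge 2^{d+1}-m=\frac{2^{d+1}-m}{d+1}\cdot|K|$ since $|K|=d+1$; as $\sum_{x\in K}1/\lambda(x)=|K|$ in this case, the claim follows at once.

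Suppose from now on that $|K_2|\ge 1$. For $x\in K_2$, Claim~\ref{cl:ksurplusmulticonglos} yields
\[
\frac{1}{\lambda(x)}\Big(k(x)-\frac{2^{d+1}-m}{d+1}\Big)\geq \frac{(d-2)(d-3)}{4}.
\]
For $x\in K_1$, Claim~\ref{cl:bigandsmallmu}\eqref{it:munottoosmall} gives $\mu(x)\ge d$, and a case split on whether $\mu(x)=d$ or $\mu(x)\ge d+1$ combined with parts~\eqref{it:smallmuk} and~\eqref{it:bigmuk} of Claim~\ref{cl:bigandsmallmu} yields
\[
k(x)-\frac{2^{d+1}-m}{d+1}\geq -\frac{(d-2)(d-3)}{4d},
\]
with the right-hand side dropping to $0$ when $\mu(x)\ge d+1$.

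Summing these estimates, the left side of the target inequality is at least
\[
|K_2|\cdot\frac{(d-2)(d-3)}{4}-|K_1|\cdot\frac{(d-2)(d-3)}{4d}
=\frac{(d-2)(d-3)}{4d}\bigl(d|K_2|-|K_1|\bigr).
\]
Using $|K_1|+|K_2|=d+1$, the parenthesis equals $(d+1)|K_2|-(d+1)\ge 0$, where the last step uses $|K_2|\ge 1$. Since $d\ge 4$ makes the leading factor nonnegative, this finishes the argument. The only real subtlety is that one needs Claim~\ref{clm:1957} to cover the $K_2=\vn$ situation, because the generic bookkeeping above gives a strictly negative lower bound there; once that case is set aside, the presence of even a single vertex in $K_2$ provides enough surplus (of order $(d-2)(d-3)/4$) to offset the at most $d$ vertices in $K_1$ that may each lose $(d-2)(d-3)/(4d)$.
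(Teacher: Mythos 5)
Your proof is correct and follows essentially the same route as the paper: per-vertex bounds from Claim~\ref{cl:bigandsmallmu}\eqref{it:smallmuk}, \eqref{it:bigmuk} and Claim~\ref{cl:ksurplusmulticonglos}, with Claim~\ref{clm:1957} covering the case that $K$ meets no other conglomerate. The only cosmetic difference is that you partition $K$ by $\lambda$ into two classes rather than the paper's three classes $Q$, $R$, $U$ (by $\mu$ and $\lambda$), using the slightly cruder bound $-\frac{(d-2)(d-3)}{4d}$ for all $\lambda(x)=1$ vertices, which your computation $(d+1)(|K_2|-1)\ge 0$ shows still suffices.
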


\begin{proof}
We consider the partition~$K=Q\dcup R\dcup U$ defined by
\begin{align*}
Q=&\{x\in K\colon \mu(x)=d\},\\
R=&\{x\in K\colon \mu(x)>d\text{ and }\lambda(x)=1\},\\
\text{ and } U=&\{x\in K\colon \mu(x)>d\text{ and }\lambda(x)>1\}\,.
\end{align*}

Summarising the lower bounds obtained in Claims~\ref{cl:bigandsmallmu}\eqref{it:smallmuk},~\ref{cl:bigandsmallmu}\eqref{it:bigmuk}, and~\ref{cl:ksurplusmulticonglos},
we have 
\[
k(x)-\frac{2^{d+1}-m}{d+1}\ge
\begin{cases}
-\frac{(d-2)(d-3)}{4d} & \text{ if } x\in Q \cr
0                      & \text{ if } x\in R \cr
\frac{\lambda(x)(d-2)(d-3)}{4} & \text{ if } x\in U.
\end{cases}
\]
Moreover, Claim~\ref{cl:weightheavyvts} discloses~$\lambda(x)=1$ for~$x\in Q$
and we can conclude 
\[
\sum_{x\in K}\frac{1}{\lambda(x)}\Big(k(x)-\frac{2^{d+1}-m}{d+1}\Big)
\geq
\frac{(d-2)(d-3)}{4}\Big(-\frac{\vert Q\vert}{d}+\vert U\vert\Big)\,.
\]
Provided that $|U|\ge 1$, this proves the claimed estimate. 
On the other hand, if $U=\vn$, then~$K$ is disjoint to all other 
conglomerates and we can invoke Claim~\ref{clm:1957}.
\end{proof}

Now let~$V'$ be the union of all conglomerates.
By summing Claim~\ref{cl:normalisedconglomerateweight} over all conglomerates, 
we obtain~$\sum_{x\in V'}k(x)\geq\frac{2^{d+1}-m}{d+1}\vert V'\vert$.
Vertices $x\in V\sm V'$ satisfy $\mu(x)>d$ (by Claim~\ref{cl:bigandsmallmu}\eqref{it:munottoosmall} and Claim~\ref{cl:weightheavyvts}) 
and, hence, $k(x)\ge \frac{2^{d+1}-m}{d+1}$ (by 
Claim~\ref{cl:bigandsmallmu}\eqref{it:bigmuk}). 
Summing everything we infer 
\[
\sum_{\tau=1}^t\vert\fB_{\tau}\vert+\sum_{x\in V}\Big(\frac{f(x)}{2}+1\Big)
=
\sum_{x\in V}k(x)
\ge	
\frac{2^{d+1}-m}{d+1}\vert V\vert\,,
\]
which concludes the proof of Theorem~\ref{thm:meta}.

\section{New regimes}\label{sec:newreg}
In this section we use the same approach to push beyond the regime~$m\leq d$.
First we deal with the special case~$m=d+1$ and~$d=4$, proving a conjecture by Frankl and Watanabe~\cite{FW:94}.

\begin{theorem}\label{thm:beyondspec}
For a set~$V$, let~$\fB_1,\dots,\fB_t\subseteq \powerset(V)$ be mountains, 
and let~${f,g,h\colon V\lra\NN_0}$ be functions
such that the following conditions hold.
\begin{enumerate}
\item\label{it:cond1} If~$F\in\fB_2\cup\dots\cup\fB_t$, 
then~$\vert F\vert\leq 3$.
\item\label{it:cond2} For all~$x\in V$, we have
\[
f(x)+7g(x)+10h(x)+\sum_{\tau=1}^t d_{\fB_{\tau}}(x)\geq 11\,.
\]
\end{enumerate}
Then 
\[
\sum_{\tau=1}^t\vert\fB_{\tau}\vert
+
\sum_{x\in V}\Bigl(1+\frac{f(x)}{2}+\frac{51g(x)}{20}+\frac{19h(x)}{5}\Bigr)
\geq
\frac{53}{10}\vert V\vert\,.
\]
\end{theorem}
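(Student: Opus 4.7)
The strategy is to imitate Theorem~\ref{thm:meta}, specialised to $d=4$, $m=5$, and using the enriched token system $(f,g,h)$ to handle configurations beyond the reach of Lemma~\ref{lem:alpha=5gen}. Among all counterexamples on a fixed vertex set $V$, one picks a minimiser of $|\fB_1|$ and defines
\[
k(x) = 1 + \frac{f(x)}{2} + \frac{51\,g(x)}{20} + \frac{19\,h(x)}{5} + \sum_{\tau=1}^{t} \sum_{x \in F \in \fB_\tau} \frac{1}{|F|}
\]
and
\[
\mu(x) = f(x) + 7g(x) + 10h(x) + \sum_{\tau=1}^{t} d_{\fB_\tau}(x),
\]
so that hypothesis~\ref{it:cond2} rewrites as $\mu(x) \ge 11$ and the desired conclusion reads $\sum_{x \in V} k(x) \ge \frac{53}{10}|V|$.

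First, minimality reductions in the spirit of Claim~\ref{cl:smalldegvtsexist} should force $|F| \le 4$ for every $F \in \fB_1$ and force every inclusion-maximal $M \in \fB_1$ to contain at least three vertices $x$ with $d_{\fB_1}(x) \le 11$: deleting a maximal set of size $p$ costs each vertex $1/p$ in $k$ and $1$ in $\mu$, and one compensates by incrementing $f$ on the affected vertices at a cost of $1/2$ per bump. Next, in analogy with Claim~\ref{cl:bigandsmallmu}, Lemmas~\ref{lem:local} and~\ref{lem:unified} applied with $d=4$ and $m=5$ should yield that $\mu(x) \ge 12$ already implies $k(x) \ge \frac{53}{10}$, whereas $\mu(x) = 11$ forces $f(x)=g(x)=h(x)=0$ and $d_{\fB_\tau}(x)=0$ for $\tau \ge 2$, together with an explicit bound on $k(x)$ depending on the number of $4$-sets of $\fB_1$ through $x$.

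For the global part, call a $5$-set $K$ a \emph{conglomerate} if some $x \in K$ satisfies $|\{A : x \in A \subseteq K\} \setminus \fB_1| \le 5$. The analogues of Claims~\ref{cl:interactionconglob1}--\ref{cl:conglointersection} should show that two distinct conglomerates share at most one vertex---with the intersection argument once again extracting a new mountain $\fB_{t+1}$ of small sets from $\fB_1$, which is legal because condition~\ref{it:cond1} remains satisfied. Every vertex $x$ with $\mu(x) = 11$ should then lie in a unique conglomerate.

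The main obstacle is the per-conglomerate inequality $\sum_{x \in K} k(x)/\lambda(x) \ge \frac{53}{10}\sum_{x \in K} 1/\lambda(x)$, the analogue of Claim~\ref{cl:normalisedconglomerateweight}. Because $m = d+1$, the lower bound $|K^{(d)} \cap \fB_1| \ge 2$ from Claim~\ref{cl:interactionconglob1}\ref{it:upperlayerconglo} fails, the safety margin in Fact~\ref{fa:dmqrr's} is unavailable, and Construction~\ref{constr:beyond} confirms that the naive extrapolation $\alpha(11) \le \tfrac{27}{5}$ is wrong; so the missing strength must be recovered by a tailored case analysis on $s = |K^{(4)} \setminus \fB_1|-1 \in \{0,1,2,3\}$ using Lemma~\ref{lem:local}, together with an appropriate charging of the worst sub-configurations to the tokens $g$ and $h$. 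The specific coefficients $7,10$ on the input side and $51/20,19/5$ on the output side are presumably tuned precisely so that the two extremal bad configurations inside a conglomerate cost exactly these amounts. Once the per-conglomerate bound is established, summing it over all conglomerates and using $k(x) \ge \frac{53}{10}$ for the remaining vertices contradicts the counterexample and concludes the proof.
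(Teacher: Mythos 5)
You have reproduced the right skeleton (minimal counterexample, the weight $k(x)$, Katona-based local bounds, grouping, token compensation), but the step you yourself call the main obstacle is exactly where the new content of this theorem lies, and your proposal does not supply it. Moreover, the mechanism you suggest for it is off: the tokens $g,h$ cannot be ``charged'' inside a per-group averaging, because in a counterexample (and in the application to Theorem~\ref{thm:11}) they may be identically zero on all relevant vertices. They only enter through deletion-plus-minimality arguments in which an entire bad configuration \emph{and some of its vertices} are removed from the instance and $g$ or $h$ is incremented on the surviving anchor vertices so that hypothesis~\eqref{it:cond2} is preserved. The paper does this at precisely two places: when two quadruples of $\fB_1$ share three tight vertices, one deletes the $18$ incident edges and the three shared vertices and bumps $g$ on the two apexes (break-even: $18+3-2\cdot\tfrac{51}{20}=\tfrac{53}{10}\cdot 3$, which is where the coefficients $7$ and $\tfrac{51}{20}$ come from, $7$ being the degree loss at each apex); and when an isolated tight quadruple $Q$ has all its incident edges inside $\powerset(Q\cup\{y\})$, one deletes the $21$ edges and the four vertices of $Q$ and bumps $h(y)$ (break-even: $21+4-\tfrac{19}{5}=\tfrac{53}{10}\cdot 4$, with $10=d_{\cE}(y)$). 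Construction~\ref{constr:beyond} shows the second configuration genuinely occurs in the extremal example, so no purely local averaging over a group can exclude it; without these two induction steps the per-group inequality (the paper's Claim~\ref{clm:nega}) cannot close. Guessing that the coefficients are ``presumably tuned'' to two extremal configurations is not a proof of this step.

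There are also concrete problems with importing the Section~\ref{sec:mountains} machinery wholesale at $m=d+1=5$, which you acknowledge but do not repair. The paper in fact abandons $5$-set conglomerates here and groups around the quadruples themselves (Claims~\ref{cl:4setsintsize}--\ref{clm:nega}); with your conglomerate definition, Claim~\ref{cl:interactionconglob1}\eqref{it:upperlayerconglo} only gives $|K^{(4)}\cap\fB_1|\ge 1$, and the proof of Claim~\ref{cl:interactionconglob1}\eqref{it:1415} needs $2^{d-1}>2m-1$, i.e.\ $8>9$, so the chain leading to Claims~\ref{cl:largemtnedgeconglo} and~\ref{cl:conglointersection} breaks exactly where you rely on ``analogues should show''. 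Finally, your local dichotomy is misstated: $\mu(x)\ge 12$ does force $k(x)\ge\tfrac{53}{10}$ (via Lemma~\ref{lem:Katona} and the mountain closure of the link), but ``$\mu(x)=11$ forces $f(x)=g(x)=h(x)=0$ and $d_{\fB_\tau}(x)=0$ for $\tau\ge 2$'' is false as stated (take $f(x)=11$); the correct statement, as in Claim~\ref{cl:smallweight}, is conditional on $k(x)$ being small and pins down the exact neighbourhood of such a vertex, which is then what feeds the final averaging.
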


\begin{proof}[Proof of Theorem~\ref{thm:11} assuming Theorem~\ref{thm:beyondspec}]
The upper bound is provided by Construction~\ref{constr:beyond}.
Given a simplicial complex~$\cS$ with vertex set $V$ and~$\delta(\cS)\geq 12$, Theorem~\ref{thm:beyondspec} applied 
to~$t=1$,~$\fB_1=\{F\in\cS\colon \vert F\vert\geq 2\}$, 
and three zero functions~$f, g, h\colon V\lra\{0\}$ yields 
\[
\vert\fB_1\vert+\vert V\vert\geq\frac{53}{10}\vert V\vert\,.
\]
Since~$\vert\cS\vert=\vert\fB_1\vert+\vert V\vert+1$, 
this proves that we have indeed $|\cS|>\frac{53}{10}|V|$.
\end{proof}

The remainder of this section is devoted to the proof of Theorem~\ref{thm:beyondspec}.
Consider a counterexample~$(\fB_1,\dots,\fB_t,V,f,g,h)$ which 
lexicographically minimises the ordered triple
\[
	\Bigl(\vert\fB_1\vert, \sum_{\tau\in[t]}\vert\fB_{\tau}\vert,
	\sum_{x\in V}\bigl(f(x)+g(x)+h(x)\bigr)\Bigr)\,.
\]
Let
\[
\Omega
=
\sum_{\tau=1}^t\vert\fB_{\tau}\vert
+
\sum_{x\in V}\Bigl(1+\frac{f(x)}{2}+\frac{51g(x)}{20}+\frac{19h(x)}{5}\Bigr)
\]
be the left side of the inequality we seek to establish. 
For every~$x\in V$, we set
\begin{align*}
\ds(x)
&=
f(x)+7g(x)+10h(x)+\sum_{\tau=1}^t d_{\fB_{\tau}}(x) \\
\text{and} \quad k(x)
&=
1+\frac{f(x)}{2}+\frac{51g(x)}{20}+\frac{19h(x)}{5}
+
\sum_{\tau=1}^t\sum_{x\in F\in\fB_{\tau}}\frac{1}{\vert F\vert}\,.
\end{align*}
So we are given that~$\ds(x)$ is always at least~$11$ and, 
due to~$\Omega=\sum_{x\in V}k(x)$, we need to show that~$k(x)$
is on average at least $\frac{53}{10}$. A straightforward adaptation of the proof 
of Claim~\ref{cl:smalldegvtsexist} discloses that 

\begin{equation}
\tag{$\star$}\label{eq:fmax}
\parbox{\dimexpr\linewidth-6em}{    \strut
\it
if $\tau\in [t]$ and $F\in\fB_\tau$ is inclusion-maximal, then there are 
at least three vertices $x\in F$ such that $\ds(x)=11$.
\strut
}
\end{equation}

In particular, we have $\vert F\vert\leq 4$ for all $F\in\fB_1$.	For brevity 
we will call the $4$-sets in~$\fB_1$ {\it quadruples}. We proceed by analysing
the possible intersections of quadruples. 

\begin{claim}\label{cl:4setsintsize}
Any two quadruples intersect in at most one vertex.
\end{claim}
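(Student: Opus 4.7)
The plan is to argue by contradiction via the mountain-splitting trick used in Claim~\ref{cl:conglointersection}. Suppose $Q_1,Q_2\in\fB_1$ are distinct quadruples with $A=Q_1\cap Q_2$ of size $|A|\in\{2,3\}$, and let $\lambda\ge 2$ denote the number of quadruples in $\fB_1$ containing $A$. A short inclusion--exclusion over these $\lambda$ quadruples shows that, for any $x\in A$,
\[
d_{\fB_1}(x)\ge
\begin{cases}
6\lambda+1 & \text{if }|A|=2, \\
4\lambda+3 & \text{if }|A|=3,
\end{cases}
\]
since each $Q_i$ contributes $7$ subsets of size $\ge 2$ containing $x$, while pairwise (and higher) intersections reduce to subsets of $A$ containing $x$, which number $1$ when $|A|=2$ and $3$ when $|A|=3$.

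When $|A|=2$, I would set $\fB_1'=\fB_1\setminus\mathcal{L}$, where $\mathcal{L}$ is the set of all $\lambda$ quadruples through $A$, and create an auxiliary mountain $\fB_{t+1}=\{L\setminus A:L\in\mathcal{L}\}$. This collection consists entirely of $2$-sets, so it is automatically a mountain satisfying condition~\eqref{it:cond1}. Then $\sum_\tau|\fB_\tau|$ is unchanged; for $x\notin A$ each removed $L$ still contributes via $d_{\fB_{t+1}}(L\setminus A)$, so $\ds(x)$ is preserved; and $d_{\fB_1'}(x)\ge 5\lambda+1\ge 11$ for $x\in A$, so condition~\eqref{it:cond2} also holds. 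The new tuple is therefore a counterexample with strictly smaller $|\fB_1|$ and identical $\Omega$, contradicting minimality. When $|A|=3$ and $\lambda\ge 3$, the same removal already leaves $d_{\fB_1'}(x)\ge 3\lambda+3\ge 12$ for $x\in A$, so no auxiliary mountain on $A$ is needed; a single $+1$ bump of $f(u_i)$ at each $u_i=Q_i\setminus A$ with $\ds(u_i)=11$ restores~\eqref{it:cond2}, and since the removal saves $\lambda$ in $\sum_\tau|\fB_\tau|$, the net $\Omega$-change is at most $-\lambda+\lambda/2<0$, again yielding a lex-smaller counterexample.

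The remaining subcase $|A|=3$, $\lambda=2$ is the principal obstacle: after deleting both $Q_1$ and $Q_2$ one only has $d_{\fB_1'}(x)\ge 9<11$ for $x\in A$, and the naive repair of placing $A$ itself in an auxiliary mountain is forbidden, because any mountain containing the $3$-set $A$ must also contain its three $2$-subsets, which makes the modification too costly in $\sum_\tau|\fB_\tau|$. I plan to circumvent this by applying~\eqref{eq:fmax} to each of the inclusion-maximal quadruples $Q_1,Q_2$, concluding that at least two vertices of $A$ must satisfy $\ds=11$ and therefore have the rigid local structure $d_{\fB_1}(x)=11$, $f(x)=g(x)=h(x)=0$, and $d_{\fB_\tau}(x)=0$ for all $\tau\ge 2$, so that the entire $\fB_1$-star of $x$ sits inside $Q_1\cup Q_2$. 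Exploiting this rigidity, the idea is to remove only one quadruple, say $Q_1$, and add a single $2$-subset of $A$ as an auxiliary mountain, choosing the ``uncompensated'' vertex of $A$ to be one with $\ds>11$. I expect the tightest subcase, in which every vertex of $Q_1\cup Q_2$ has $\ds=11$, to require a direct structural degree count in the spirit of Claim~\ref{clm:1802} to rule out, and this is the part of the argument where I anticipate the main technical difficulty.
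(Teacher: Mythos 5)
Your reductions for $|A|=2$ and for $|A|=3$ with $\lambda\ge 3$ are essentially workable (though for $|A|=2$ the bound $d_{\fB_1}(x)\ge 6\lambda+1$ presumes the quadruples through $A$ meet pairwise exactly in $A$, which you cannot assume before triple intersections are excluded; it is safer to remove only $Q_1,Q_2$ and add the two $2$-sets $Q_i\sm A$, and the paper in fact dispatches $|A|=2$ in one line from~\eqref{eq:fmax}, since both shared vertices would have $\ds\ge 13$). The genuine gap is the subcase you yourself flag as open, $|A|=3$ with $\lambda=2$, and the repair you sketch does not close it. Write $Q_1=A\cup\{u\}$, $Q_2=A\cup\{v\}$, $A=\{x,y,z\}$, with $\ds(x)=\ds(y)=11$. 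If $\ds(z)\ge 12$, then~\eqref{eq:fmax} applied to $Q_1$ forces $\ds(u)=11$; so after removing $Q_1$ and adding one $2$-subset of $A$ as a new mountain, it is the fourth vertex $u$ (not a vertex of $A$) that violates condition~\eqref{it:cond2}, and repairing this by $f(u)\mapsto f(u)+1$ raises $\Omega$ by $\tfrac12$ while $\sum_\tau\vert\fB_\tau\vert$ is unchanged. The modified tuple is then not known to satisfy $\Omega<\tfrac{53}{10}\vert V\vert$, so no contradiction with minimality is obtained; covering three vertices of $Q_1$ by new $2$-sets, or adding a $3$-set (whose $2$-subsets the mountain property forces you to add as well), only makes the accounting worse. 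The paper instead first uses the minimality of $\sum_x\bigl(f(x)+g(x)+h(x)\bigr)$ to force $f(z)=g(z)=h(z)=0$, extracts a third quadruple $M$ through $z$ with $x,y\notin M$, and splits off the entire $z$-link $\{F\sm\{z\}\colon z\in F\in\fB_1,\ \vert F\vert\ge3,\ x,y\notin F\}$ as a new mountain, which leaves $\Omega$ and all degree conditions intact while strictly decreasing $\vert\fB_1\vert$.

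The remaining branch $\ds(x)=\ds(y)=\ds(z)=11$ cannot be ruled out by ``a direct structural degree count in the spirit of Claim~\ref{clm:1802}'': the configuration is locally consistent, since every edge meeting $x,y,z$ lies inside the five-set $Q_1\cup Q_2$ while $u$ and $v$ are free to have further edges elsewhere, so no inequality among degrees alone refutes it. The paper handles it by a move your proposal never considers, namely deleting the three vertices $x,y,z$ from $V$ together with the $18$ edges meeting them and incrementing $g(u)$ and $g(v)$; condition~\eqref{it:cond2} survives because $g$ enters $\ds$ with coefficient $7$, and the $\Omega$-accounting closes only because $18+3-2\cdot\tfrac{51}{20}=3\cdot\tfrac{53}{10}$ exactly, i.e.\ it exploits the precise coefficients $7$ and $\tfrac{51}{20}$ attached to $g$ and leaves no slack for a cruder local estimate. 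Without these two devices (or comparable global arguments) your proof of the claim is incomplete.
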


\begin{proof}
First notice that if two quadruples intersect in exactly two vertices, 
say~$x$ and~$y$, then~$\ds(x)\ge d_{\fB_1}(x)\ge 13>11$, and, similarly, 
$\ds(y)>11$, which contradicts~\eqref{eq:fmax}.

Hence, we are left to find a contradiction in the case that two quadruples
intersect in three vertices. Assume that~$u, v, x, y, z\in V$ are such 
that~$uxyz, vxyz\in\fB_1$.
Due to~$d_{\fB_1}(x), d_{\fB_1}(y), d_{\fB_1}(z)\geq 11$
and~\eqref{eq:fmax}, we can suppose, without loss of generality, 
that~$\ds(x)=d_{\fB_1}(x)=11$ and~$\ds(y)=d_{\fB_1}(y)=11$.

\smallskip

{\it \hskip2em First case: $\ds(z)=11$.}

\smallskip

In this case, the only edges in~$\bigcup_{\tau\in [t]}\fB_\tau$ 
containing at least one of~$x$,~$y$, and~$z$ are those contained 
in~$uxyz$ or~$vxyz$. Let~$\cE$ be the set of these $18$ edges,
\begin{enumerate}
\item[$\bullet$] set $V'=V\sm \{x, y, z\}$, 
\item[$\bullet$] define~$g'\colon V'\lra\NN_0$ by 
setting~$g'(u)=g(u)+1$,~$g'(v)=g(v)+1$, and~$g'(w)=g(w)$ for 
all~$w\in V'\sm\{u, v\}$,
\item[$\bullet$] and let $f'$ and $h'$ be the restrictions of $f$ and $h$ to $V'$, respectively.
\end{enumerate}
Since all assumptions of our theorem hold for~$(\fB_1\sm\cE, \fB_2, \dots, \fB_t, V', f', g', h')$, the minimality of $|\fB_1|$ yields
\[
\Omega-18-3+2\cdot\frac{51}{20}
\ge
\frac{53}{10}(\vert V\vert-3)\,,
\]
which simplifies to~$\Omega\geq\frac{53}{10}\vert V\vert$.

\smallskip

{\it \hskip1em Second case: $\ds(z)\ge 12$.}

\smallskip

Due to the minimality of $\sum_{w\in V}(f(w)+g(w)+h(w))$, we have 
$f(z)=g(z)=h(z)=0$ and, thus, there are an index $\tau\in [t]$ and 
a set~$M\in \fB_\tau$ such that $z\in M$ but, if~$M\in\fB_1$, then neither~$M\subseteq uxyz$ nor~$M\subseteq vxyz$. Choosing~$M$ inclusion-maximal with these properties, 
we have $|M|\ge 4$ due to~\eqref{eq:fmax}. Therefore,~$\tau=1$ and~$M$
is a quadruple distinct from~$uxyz$ and~$vxyz$. Thus,
\[
\fB_{t+1}
=
\{F\sm\{z\}\colon z\in F\in\fB_1\text{ and } |F|\ge 3 \text{ and }x,y\not\in F\}
\] 
is a non-empty mountain. We contend that 
\[
\fB_1'
=
\fB_1\sm\{F\colon |F|\ge 3 \text{ and } z\in F \text{ and } x,y\not\in F\}\,,
\]
is a mountain as well. To see this, we consider any two
sets~$F$, $F'$ such that~$F\subsetneq F'\in\fB_1'$ and~$|F|\ge 2$.
If~$z\not\in F$ or~$\vert F\vert=2$, we immediately get~$F\in\fB_1'$.
So we may assume that~$z\in F$,~$\vert F\vert=3$, and~$\vert F'\vert=4$.
Then for~$F'$ to be in~$\fB_1'$, we need to have~$x\in F'$ or~$y\in F'$.
But the only quadruples containing~$x$ or~$y$ are~$uxyz$ and~$vxyz$. 
Hence,~$F$ contains~$x$ or~$y$, whereby~$F\in\fB_1'$.
Altogether,~$\fB_1'$ is indeed a mountain.

Since 
\[
(\fB_1',\fB_2,\dots,\fB_t,\fB_{t+1},V,f,g,h)
\]
also satisfies the condition of the theorem, $|\fB_1|=|\fB'_1|+|\fB_{t+1}|$
and the minimality of $|\fB_1|$ yield the desired 
estimate~$\Omega\ge \frac{53}{10}|V|$.
\end{proof}

Denote by~$\lambda(x)$ the number of quadruples containing a given vertex~$x$.

\begin{claim}\label{cl:smallweight}
Let~$x\in V$.
\begin{enumerate}
\item\label{it:631} If $\lambda(x)=0$, then $k(x)\ge\frac{53}{10}$.
\item\label{it:632} If $\lambda(x)=1$ and $k(x)<\frac{53}{10}+\frac{1}{12}$,
then~$k(x)=\frac{53}{10}-\frac{1}{20}$ and~$\ds(x)=d_{\fB_1}(x)=11$.
Moreover, there are distinct vertices~$y_1,\dots,y_4$ such 
that~$\fB_1$ contains all four edges~$xy_i$, all six edges~$xy_iy_j$, and the 
quadruple~$xy_1y_2y_3$.  
\item\label{it:633} If $\lambda(x)\ge 2$, 
then~$k(x)\ge\frac{53}{10}+\frac{3}{5}\lambda(x)$.
\end{enumerate}
\end{claim}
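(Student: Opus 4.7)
The plan is to reduce all three parts to the weighted Kruskal--Katona inequality (Lemma~\ref{lem:Katona}) applied to an auxiliary simplicial complex $\cA$ that encodes the local data around~$x$. Following Claim~\ref{cl:bigandsmallmu}, take $\cA_\tau\cong\{\emptyset\}\cup\{F\sm\{x\}\colon x\in F\in\fB_\tau\}$ for $\tau\in[t]$ together with $f(x)$ further copies of $\powerset([1])$, and glue them along the common element $\emptyset$ while keeping the remaining vertex sets pairwise disjoint. Then $\cA$ is a simplicial complex with $|\cA|=1+f(x)+\sum_{\tau}d_{\fB_\tau}(x)\ge 12-7g(x)-10h(x)$ and
\[
\sum_{F\in\cA}\frac{1}{|F|+1}
=
k(x)-\frac{51g(x)}{20}-\frac{19h(x)}{5}\,.
\]

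For~\ref{it:631}, condition~\ref{it:cond1} together with $\lambda(x)=0$ keeps all sets of $\cA$ of size at most~$2$. Lemma~\ref{lem:Katona} with $M=\{0,1,2\}$ and $w_m=\frac{1}{m+1}$ then bounds the displayed sum from below by the initial-segment weight $W(|\cA|)$ in $\bigl(\bigcup_{m\le 2}\NN^{(m)},\strictif\bigr)$. A short case distinction over the finitely many relevant $(g(x),h(x))\in\NN_0^2$ yields $k(x)\ge\frac{53}{10}$; the tight boundary is $(0,1)$, where $|\cA|\ge 2$ and $\tfrac{3}{2}+\tfrac{19}{5}=\tfrac{53}{10}$.

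For~\ref{it:632}, let $Q=\{x,y_1,y_2,y_3\}$ be the unique quadruple through~$x$. Then $\cA$ contains $\powerset(\{y_1,y_2,y_3\})$, while $\lambda(x)=1$ forbids any further $3$-set. Applying Lemma~\ref{lem:Katona} with $M=\{0,1,2,3\}$ and reading off initial-segment weights (notably $\tfrac{15}{4}$ at size~$8$, $\tfrac{21}{4}$ at size~$12$, and $\tfrac{11}{2}$ at size~$13$), one sees that any $(g(x),h(x))\ne(0,0)$ or any $|\cA|\ge 13$ already forces $k(x)>\tfrac{53}{10}+\tfrac{1}{12}$. Hence $g(x)=h(x)=0$, $|\cA|=12$, and the weight equals exactly $\tfrac{21}{4}=\tfrac{53}{10}-\tfrac{1}{20}$. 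Writing $|\cA|=2+f(x)+p+t$ with $p,t$ the total pair- and triple-incidences of $x$, the constraints $f(x)+p+t=10$, $p,t\ge 3$ (forced by~$Q$), and $t\le\sum_\tau\binom{p_\tau}{2}\le\binom{p}{2}$ pin down $f(x)=0$, $p=4$, $t=6$. The resulting equality $t=\binom{p}{2}$ then forces all pair- and triple-incidences of $x$ to live in $\fB_1$ on a single set of four vertices $y_1,\dots,y_4$, producing the described configuration.

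Part~\ref{it:633} bypasses $\cA$ entirely. By Claim~\ref{cl:4setsintsize}, the $\lambda=\lambda(x)$ quadruples through $x$ pairwise meet only in $x$; since $\fB_1$ is a mountain, each of them forces three pairs, three triples and itself into $\fB_1$, contributing $\tfrac{3}{2}+1+\tfrac{1}{4}=\tfrac{11}{4}$ to $k(x)-1$. Summing these mutually disjoint contributions yields $k(x)\ge 1+\tfrac{11\lambda}{4}$, and the elementary identity $1+\tfrac{11\lambda}{4}-\tfrac{53}{10}-\tfrac{3\lambda}{5}=\tfrac{43(\lambda-2)}{20}\ge 0$ concludes. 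The main obstacle is~\ref{it:632}: one must extract from the equality case of Lemma~\ref{lem:Katona}, in combination with the disjointness of the mountains $\fB_\tau$, the exact structural description claimed---in particular the fact that no pair- or triple-incidence of $x$ escapes $\fB_1$.
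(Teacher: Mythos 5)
Your overall strategy is the paper's: build the local complex $\cA$ around $x$ as in Claim~\ref{cl:bigandsmallmu}, run a case analysis on $(g(x),h(x))$ and $|\cA|$, and use the disjointness from Claim~\ref{cl:4setsintsize} for part~\ref{it:633}. Parts~\ref{it:631} and~\ref{it:633} are fine (your use of Lemma~\ref{lem:Katona} with $M=\{0,1,2\}$ in~\ref{it:631} is a correct, slightly more formalised version of the paper's direct count, and your $\frac{43(\lambda-2)}{20}$ identity matches the paper's computation), and your superadditivity-of-$\binom{\cdot}{2}$ argument in~\ref{it:632} is a legitimate alternative to the paper's ``$\cA$ is not a vertex-disjoint union'' argument for forcing all incidences into a single mountain, which must then be $\fB_1$ because of condition~\eqref{it:cond1} and the quadruple.

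However, part~\ref{it:632} as written has a genuine gap at the $|\cA|=12$ step. First, the ``initial-segment weight $\frac{21}{4}$ at size $12$'' is wrong: the first $12$ sets in $(\bigcup_{m\le 3}\NN^{(m)},\strictif)$ consist of $\vn$, four singletons, five pairs and two triples, so Lemma~\ref{lem:Katona} only yields the bound $\frac{31}{6}<\frac{21}{4}$; hence ``$|\cA|=12$, and the weight equals exactly $\frac{21}{4}$'' does not follow at that point. Second, the constraints you then list ($f(x)+p+t=10$, $p,t\ge 3$, $t\le\binom{p}{2}$) do \emph{not} pin down $(f(x),p,t)=(0,4,6)$: for instance $(0,5,5)$, $(0,6,4)$ and $(2,4,4)$ all satisfy them. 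The missing ingredient is the standing hypothesis $k(x)<\frac{53}{10}+\frac{1}{12}$: with $\lambda(x)=1$ and $f(x)+p+t=10$ one has $k(x)=\frac{5}{4}+\frac{f(x)+p}{2}+\frac{t}{3}=\frac{25}{4}-\frac{t}{6}$, so $t\le 5$ already gives $k(x)\ge\frac{65}{12}>\frac{53}{10}+\frac{1}{12}$; hence $t\ge 6$, and only then do $t\le\binom{p}{2}$ (forcing $p\ge 4$) and $f(x)+p=10-t\le 4$ force $(f(x),p,t)=(0,4,6)$ and $k(x)=\frac{21}{4}$. With this one-line repair (which is exactly the role played by the paper's inequality $k(x)\ge \frac{14}{3}+\frac{\mu}{6}-\frac{\lambda(x)}{12}$ and the bound $|V(\cA)^{(2)}\cap\cA|\le\binom{\mu}{2}$), your proof of~\ref{it:632} goes through and delivers the stated structural conclusion.
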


\begin{proof}
Starting with~\eqref{it:631} and~\eqref{it:632} we first suppose $\lambda(x)\le 1$.
The construction we saw right at the beginning of the proof 
of Claim~\ref{cl:bigandsmallmu} yields a simplicial complex $\cA$ with 
\begin{align*}
|\cA|&=1+f(x)+\sum_{\tau=1}^t d_{\fB_\tau}(x)\ge 12-7g(x)-10h(x)\,, \\
k(x)&=\sum_{A\in \cA}\frac 1{|A|+1}+\frac{51g(x)}{20}+\frac{19h(x)}{5}\,, \\
\text{ and } \quad 
\lambda(x)&=|V(\cA)^{(3)}\cap \cA|\,.
\end{align*}
Let us observe that
\begin{center}
\begin{tabular}{c|c}
if & then \\ \hline
$g(x)+h(x)\ge 2$ & $k(x)\ge 1+\frac52(g(x)+h(x))\ge 6$ \\
$g(x)=0$ and $h(x)=1$ & $|\cA|\ge 2$ 
and $k(x)\ge 1+\frac 12+\frac{19}5+\frac{\lambda(x)}{4}=\frac{53}{10}
+\frac{\lambda(x)}{4}$ \\
$g(x)=1$ and~$h(x)=0$ & $|\cA|\ge 5$, whence 
$k(x)\ge 1+\frac 32+\frac 13+\frac{51}{20}=\frac{53}{10}+\frac 1{12}$.
\end{tabular}
\end{center}

It remains to discuss the case that $g(x)=h(x)=0$ and $|\cA|\ge 12$.
Setting $\mu=|V(\cA)|$ we have $|V(\cA)^{(2)}\cap \cA|\ge 11-\mu-\lambda(x)$
and, therefore, 
\[
k(x)
\ge 
1+\frac\mu 2+\frac{11-\mu-\lambda(x)}3+\frac{\lambda(x)}4 
=
\frac{14}3+\frac\mu 6-\frac{\lambda(x)}{12}\,.
\]
If $\mu\ge 5$, this yields 
$k(x)\ge \frac{14}3+\frac 56-\frac{1}{12}>\frac{53}{10}+\frac{1}{12}$
and we are done. So we can assume~$\mu\le 4$ and 
$6\le 7-\lambda(x)\le |V(\cA)^{(2)}\cap \cA| \le \binom \mu 2$, whence $\mu=4$ 
and~$\lambda(x)=1$. This means that~$\cA$ has four vertices, all six 
possible $2$-sets, and one $3$-set. As~$\cA$ cannot be expressed in a non-trivial 
way as a union of two smaller vertex-disjoint simplicial complexes, all its non-empty edges come 
from the same mountain. By assumption~\eqref{it:cond1} of the theorem, this mountain 
can only be~$\fB_1$ and the `neighbourhood' of $x$ looks as described in 
clause~\eqref{it:632}. 
Finally, we have $k(x)=\frac{14}3+\frac23-\frac 1{12}=\frac{53}{10}-\frac{1}{20}$,
which completes the proof of~\eqref{it:631} and~\eqref{it:632}.  

Turning to~\eqref{it:633} we observe that every quadruple~$Q$ 
with~$x\in Q$ satisfies
\[
\sum_{x\in A\subseteq Q}\frac{1}{\vert A\vert}
=
\frac{15}{4}\,.
\]
Owing to Claim~\ref{cl:4setsintsize}, this yields 
\[
k(x)
\ge
1+\sum_{x\in A\in\fB_1}\frac{1}{\vert A\vert}
\geq 
1+\Bigl(\frac{15}{4}-1\Bigr)\lambda(x)
\geq 
1+\frac{43}{20}\lambda(x)+\frac{12}{20}\lambda(x)
\geq
\frac{53}{10}+\frac{3}{5}\lambda(x)\,.\qedhere
\]
\end{proof}

\begin{claim}\label{clm:nega}
Every quadruple $Q$ satisfies 
\[
\sum_{x\in Q}\frac{k(x)}{\lambda(x)}
\geq
\frac{53}{10}\sum_{x\in Q}\frac{1}{\lambda(x)}\,.
\]
\end{claim}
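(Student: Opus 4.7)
The plan is to partition $Q$ into three classes based on Claim~\ref{cl:smallweight}: let $B$ denote the \emph{bad} vertices of $Q$ (those with $\lambda=1$ and $k=\tfrac{53}{10}-\tfrac{1}{20}$), $G$ the remaining $\lambda=1$ vertices of $Q$ (each with $k\ge\tfrac{53}{10}+\tfrac{1}{12}$), and $U$ those with $\lambda\ge 2$ (each contributing at least $\tfrac{3}{5}$ to $\tfrac{k-53/10}{\lambda}$). Rewriting the desired inequality as $\sum_{x\in Q}\tfrac{k(x)-53/10}{\lambda(x)}\ge 0$, Claim~\ref{cl:smallweight} gives lower bounds $-\tfrac{1}{20}$, $\tfrac{1}{12}$, and $\tfrac{3}{5}$ per vertex in $B$, $G$, and $U$ respectively.

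If $|U|\ge 1$, the single surplus $\tfrac{3}{5}$ from a $U$-vertex already dominates the total deficit $\tfrac{|B|}{20}\le\tfrac{3}{20}$, so the inequality holds. If $|U|=0$ and $|B|\le 2$, then $|G|\ge 2$ and the surplus $\tfrac{|G|}{12}\ge\tfrac{1}{6}$ exceeds $\tfrac{|B|}{20}\le\tfrac{1}{10}$. Thus only the residual case $|U|=0$, $|B|\ge 3$ requires further work.

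For this hard case I would first extract the rigid structure it imposes. By Claim~\ref{cl:smallweight}\eqref{it:632}, each bad $x\in Q$ has exactly four $\fB_1$-neighbours, the unique quadruple containing $x$ is $Q$, and the mountain condition on $\fB_1$ (applied to $xyy_4\in\fB_1$, where $y_4$ is $x$'s fifth neighbour) forces every other bad $y\in Q$ to share the same external vertex $y_4\notin Q$. Consequently $\fB_1$ restricted to $Q\cup\{y_4\}$ consists of all $10$ pairs and all $10$ triples of this $5$-set plus $Q$ itself, while Claim~\ref{cl:4setsintsize} forces the six ``mixed'' triples $y_4v_iv_j$ with $v_i,v_j\in Q$ to be inclusion-maximal in $\fB_1$.

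I would then invoke the lex-minimality of the counterexample. Removing one mixed triple $y_4v_iv_j$ from $\fB_1$ strictly decreases $|\fB_1|$ and drops $\ds$ by one at $y_4$, $v_i$, and $v_j$; compensating the two bad vertices $v_i,v_j$ by $f\mapsto f+1$ costs exactly $1$. Provided $\ds(y_4)\ge 12$, no compensation at $y_4$ is required, giving $\Omega'\le\Omega$ and contradicting that the modified configuration $(\fB_1',\fB_2,\dots,f',g,h)$ cannot be a counterexample. The main obstacle is the remaining subcase $\ds(y_4)=11$, where a careful analysis of the few admissible structures at $y_4$ (either one maximal extra $\fB_1$-pair $y_4z$ outside the $5$-set, or $f(y_4)=1$ with no extra $\fB_1$-edge) is needed: in the first, deleting $y_4z$ achieves $\Omega'\le\Omega$ directly, while the second subcase is the subtlest and would require a tailored multi-step reduction exploiting the third coordinate $\sum(f+g+h)$ of the lex-minimality ordering.
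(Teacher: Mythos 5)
Your case analysis up to the residual case is sound, and your single-triple deletion works whenever $\ds(y_4)\ge 12$ (and, with the $f$-compensation at both endpoints, also in your subcase of one extra pair at $y_4$, where $\Omega'=\Omega$ still suffices). But the final subcase --- $\ds(y_4)=11$ realised by $f(y_4)=1$ with no edge at $y_4$ outside $Q\cup\{y_4\}$ --- is a genuine gap, not a routine loose end: you explicitly defer it to an unspecified ``tailored multi-step reduction'', and no such reduction of the single-edge type you propose can work, because this configuration is \emph{tight}. Indeed, take $V=Q\cup\{y_4\}$, let $\fB_1$ consist of all ten pairs, all ten triples and $Q$, and set $f(y_4)=1$, $f\equiv 0$ otherwise; then every vertex satisfies $\ds\ge 11$ and $\Omega=21+5+\tfrac12=\tfrac{53}{10}\cdot 5$ exactly. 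So any admissible modification must be lossless, and your triple-removal costs $\tfrac12$ once $y_4$ needs compensation (three $f$-increments against one deleted edge). Since the subcase is non-vacuous and exactly extremal, leaving it open leaves the claim unproved.

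For comparison, the paper avoids this case distinction entirely by working under the assumed negativity of $\sum_{x\in Q}\lambda(x)^{-1}(k(x)-\tfrac{53}{10})$: this forces \emph{all four} vertices of $Q$ (not just three) to have contribution exactly $\tfrac{53}{10}-\tfrac1{20}$, hence no edge or $f,g,h$-weight outside $\powerset(Q\cup\{y_4\})$, and then one wholesale reduction is performed: delete all $21$ edges of $\powerset(Q\cup\{y_4\})\cap\fB_1$, delete the four vertices of $Q$, and compensate the degree drop of exactly $10$ at $y_4$ by $h(y_4)\mapsto h(y_4)+1$; the bookkeeping $\Omega-21-4+\tfrac{19}{5}\ge\tfrac{53}{10}(|V|-4)$ gives the contradiction uniformly, with no subcases on $\ds(y_4)$. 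Your route could be repaired along the same lines: in your residual case, first note that if the fourth vertex $x_4$ has any contribution beyond its eleven edges inside $Q\cup\{y_4\}$, that contribution is at least $\tfrac13$, so $k(x_4)-\tfrac{53}{10}\ge\tfrac{17}{60}>\tfrac{3}{20}$ and the inequality holds directly; otherwise all four vertices are ``bad'', and the paper's wholesale removal (which is insensitive to the value of $\ds(y_4)$, in particular to whether $f(y_4)=1$) closes your missing subcase. As written, however, the proposal does not contain this step.
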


\begin{proof}
Assume for the sake of contradiction that 
\begin{equation}\label{eq:1542}
\sum_{x\in Q}\frac{1}{\lambda(x)}\Bigl(k(x)-\frac{53}{10}\Bigr)<0\,.
\end{equation}

By Claim~\ref{cl:smallweight} each of the 
differences $k(x)-\frac{53}{10}$ is at least $-\frac{1}{20}$ and, therefore, 
we have $k(x)<\frac{53}{10}+\frac{3}{20}\lambda(x)$ for every $x\in Q$. 
Because of Claim~\ref{cl:smallweight}\eqref{it:633}, this is only possible 
if $\lambda(x)=1$, i.e., if $Q$ is disjoint to all other quadruples. 
Now~\eqref{eq:1542} simplifies to 
\begin{equation}\label{eq:1721}
\sum_{x\in Q}\Bigl(k(x)-\frac{53}{10}\Bigr)<0\,.
\end{equation}
Appealing again to Claim~\ref{cl:smallweight}\eqref{it:632}, we learn that each of 
these four differences is either equal to $-\frac 1{20}$ or at least $\frac 1{12}$.
Consequently, we can enumerate $Q=\{x_1,\dots,x_4\}$ in such a way that
$k(x_1)=k(x_2)=k(x_3)=\frac{53}{10}-\frac 1{20}$. 
Once more by Claim~\ref{cl:smallweight}\eqref{it:632} this leads to a vertex~$y$ 
such that all $2$-sets~$x_iy$ with~$i\in [4]$ and all $3$-sets~$x_ix_jy$ 
with~$ij\in [4]^{(2)}$ are in~$\fB_1$. 
In particular, we thereby know four $2$-sets, six $3$-sets, and one quadruple 
containing~$x_4$. Thus we have found the 
terms $1+\frac42+\frac63+\frac14=\frac{53}{10}-\frac 1{20}$ contributing 
to $k(x_4)$. Any further contribution to~$k(x_4)$, if there exists any, 
needs to be at least $\frac 14$ and would therefore give 
$\sum_{i=1}^4(k(x_i)-\frac{53}{10})\ge \frac14-\frac 4{20}>0$, contrary 
to~\eqref{eq:1721}.

It remains to discuss the case that all edges intersecting $Q$ are 
in~$\cE=\powerset(Q\cup\{y\})\cap \fB_1$ and none of them is in~$\bigcup_{\tau\in[2,t]}\fB_{\tau}$. 
We observe that $|\cE|=21$, $d_\cE(y)=10$, 
\begin{enumerate}
\item[$\bullet$] set $V'=V\sm Q$,  
\item[$\bullet$] define~$h'\colon V'\lra\NN_0$ by~$h'(y)=h(y)+1$ 
and~$h'(z)=h(z)$ for all~$z\in V'\sm\{y\}$,
\item[$\bullet$] and let $f'$ and $g'$ be the restrictions of $f$ and $g$ to $V'$, respectively.
\end{enumerate}
Since~$(\fB_1\sm\cE,\fB_2,\dots,\fB_t,V',f',g',h')$
satisfies all assumptions of our theorem, the minimality of $\fB_1$
discloses 
\[
\Omega-21-4+\frac{19}{5}
\ge
\frac{53}{10}(\vert V\vert-4)\,,
\]
whence~$\Omega\geq\frac{53}{10}\vert V\vert$. This contradiction concludes the 
proof of Claim~\ref{clm:nega}. 
\end{proof}

Summing this result over all quadruples, we learn that the union~$V'$ of all
quadruples satisfies $\sum_{x\in V'} k(x)\ge\frac{53}{10}|V'|$. Together with 
the fact that $k(x)\ge \frac{53}{10}$ holds for all $x\in V\sm V'$ 
(cf.\ Claim~\ref{cl:smallweight}\eqref{it:631}), this 
proves~$\Omega=\sum_{x\in V} k(x)\ge\frac{53}{10}|V|$. 
Thereby Theorem~\ref{thm:beyondspec} is proved.

\begin{bibdiv}
\begin{biblist}

\bib{F:83}{article}{
author={Frankl, Peter},
title={On the trace of finite sets},
journal={J. Combin. Theory Ser. A},
volume={34},
date={1983},
number={1},
pages={41--45},
issn={0097-3165},
review={\MR{0685210}},
doi={10.1016/0097-3165(83)90038-9},
}

\bib{FW:94}{article}{
author={Watanabe, M.},
author={Frankl, P.},
title={Some best possible bounds concerning the traces of finite sets},
journal={Graphs Combin.},
volume={10},
date={1994},
number={3},
pages={283--292},
issn={0911-0119},
review={\MR{1304385}},
doi={10.1007/BF02986678},
}

\bib{Ka68}{article}{
author={Katona, G.},
title={A theorem of finite sets},
conference={
	title={Theory of graphs},
	address={Proc. Colloq., Tihany},
	date={1966},
},
book={
	publisher={Academic Press, New York},
},
date={1968},
pages={187--207},
review={\MR{0290982}},
}

\bib{K:78}{article}{
author={Katona, G. O. H.},
title={Optimization for order ideals under a weight assignment},
conference={
	title={Probl\`emes combinatoires et th\'eorie des graphes},
	address={Colloq. Internat. CNRS, Univ. Orsay, Orsay},
	date={1976},
},
book={
	series={Colloq. Internat. CNRS},
	volume={260},
	publisher={CNRS, Paris},
},
date={1978},
pages={257--258},
}

\bib{Kru63}{article}{
author={Kruskal, Joseph B.},
title={The number of simplices in a complex},
conference={
	title={Mathematical optimization techniques},
},
book={
	publisher={Univ. of California Press, Berkeley, Calif.},
},
date={1963},
pages={251--278},
review={\MR{0154827}},
}

\bib{LMR:24}{article}{
title={Exact results on traces of sets}, 
author={Li, Mingze},
author={Ma, Jie},
author={Rong, Mingyuan},
year={2024},
eprint={2406.18870},
}

\bib{PS:21}{article}{
author={Piga, Sim\'on},
author={Sch\"ulke, Bjarne},
title={On extremal problems concerning the traces of sets},
journal={J. Combin. Theory Ser. A},
volume={182},
date={2021},
pages={Paper No. 105447, 15},
issn={0097-3165},
review={\MR{4238068}},
doi={10.1016/j.jcta.2021.105447},
}

\end{biblist}
\end{bibdiv}

\end{document}